\newcommand{\nball}{\mathbb{B}^n}
\newcommand{\dup}[2]{\langle#1,#2 \rangle}
\newcommand{\ip}[2]{(#1,#2)}
\newcommand{\Co}{\mathrm{Co}} 
\newcommand{\cdual}{{\ast}}
\newcommand{\sun}{\mathrm{SU}(n,1)}
\newcommand{\fS}{\mathcal{S}} 
\newtheorem{theorem}{Theorem}[section]
\newtheorem{lemma}[theorem]{Lemma}
\newtheorem{corollary}[theorem]{Corollary}
\newtheorem{proposition}[theorem]{Proposition}
\theoremstyle{definition}
\newtheorem{remark}[theorem]{Remark}
 \newcommand{\note}[2][\null]{%
   \marginpar{\renewcommand{\baselinestretch}{1}\vspace{-1em}\hrule\vspace{3pt}%
  \raggedright\textsf{ \footnotesize#2\ifx \footnotesize#1\null\else\\\hfill--- 
   {\em #1}\fi}\vspace{1.5em}}%
 }
\newcommand{\wf}{\widetilde{f}}
\newcommand{\wg}{\widetilde{g}}
\newcommand{\cS}{\mathcal{S}}
\newcommand{\cA}{\mathcal{A}}
\newcommand{\cH}{\mathcal{H}}
\newcommand{\cP}{\mathcal{P}}
\newcommand{\cV}{\mathcal{V}}
\newcommand{\Z}{\mathbb{Z}}
\newcommand{\fg}{\mathfrak{g}}
\newcommand{\Ad}{\mathrm{Ad}}
\renewcommand{\note}{\footnote}
\begin{document}

\title{New atomic decompositons for Bergman spaces on the unit ball} 
\subjclass[2010]{Primary
  32A36, 43A15, 42B35, 46E15; Secondary 22D12} 
\keywords{Coorbit spaces,  Bergman spaces, representation theory of
  Lie groups, sampling theory} 

\author{Jens Gerlach Christensen} 
\address{ Department of Mathematics, Colgate University} 
\email{jchristensen@colgate.edu}
\urladdr{http://www.math.colgate.edu/~jchristensen}
\thanks{The research of J. Christensen was partially supported by NSF grant DMS-1101337.}
\author{Karlheinz Gr\"ochenig} 
\address{Faculty of Mathematics, University of Vienna } 
\email{karlheinz.groechenig@univie.ac.at}
\urladdr{http://homepage.univie.ac.at/karlheinz.groechenig/}
\thanks{K.\ Gr\"ochenig  acknowledges the partial 
  support of  the  project P26273 - N25  of the
Austrian Science Fund (FWF) and the great hospitality of CIRM,
Marseille, during the completion of this work.}
\author{Gestur \'Olafsson} 
\address{ Department of Mathematics, Louisiana State University} 
\email{olafsson@math.lsu.edu}
\urladdr{http://www.math.lsu.edu/~olafsson}
\thanks{The research of G. \'Olafsson was supported by NSF grant DMS-1101337.}

\begin{abstract}
  We derive atomic decompositions and frames for 
  weighted Bergman spaces of several complex variables on the unit
  ball in the spirit of  Coifman, Rochberg,
  and Luecking. In contrast to our predecessors,   we use group theoretic methods, in particular the
  representation theory of the discrete series of $\sun $ and its covering
  groups.  One of the benefits is a much larger class of admissible
  atoms. 
\end{abstract}

\maketitle

\section{Introduction}
\label{sec:intro}

\noindent
Coorbit theory is an abstract theory for the construction of atomic
decompositions and frames in the presence of some group
invariance~\cite{Christensen2012,Christensen2011,Christensen2013,Feichtinger1988,Feichtinger1989a,Fornasier05,
  Fuehr2014,Fuehr2014a,Grochenig1991}.  
The input data for coorbit theory are (i)  a representation $(\pi,\cS)$ of a locally compact group $G$ in a Fr\'echet space $\cS$ embedded continuously and densely in
it conjugate dual $\cS^*$, (ii) a  cyclic vector $\varphi \in \cS$, and
(iii)  a $G$-invariant function space $Y$ on $G$ (usually $Y$ is a weighted $L^p$-space
on $G$). Under suitable additional assumptions  on $(\pi ,\psi , Y)$ one can then
define a Banach space $\Co _\cS ^\psi Y$, the \emph{coorbit of $Y$},
via representation coefficients: a distribution 
$f\in \cS^*$ is in $\Co _\cS ^\psi Y$, if the corresponding representation
coefficient $x \to \langle f, \pi (x)\psi\rangle $ belongs to  $Y$.  The
main achievement of coorbit theory is to derive the existence of
atomic decompositions and Banach frames for the coorbit spaces. 
The various levels of generality and the assumptions required to derive
such results have become an active topic of research, and currently
several generalizations of the original theory exist and are applied
to new examples, see~\cite{Christensen2011,Christensen2012a,DKST09,Fuhr2012}.

The first set of  examples in~\cite{Feichtinger1988} revealed several classical
families of function spaces as coorbit spaces with respect to some
basic unitary representations in harmonic analysis: (i) For the Schr\"odinger representation of the
Heisenberg group, the coorbit spaces coincide with the  modulation
spaces, and  their 
corresponding frames and decompositions are known as Gabor frames and
Gabor expansions~\cite{book,fg92chui}. (ii) For the quasi-regular representation of
the group of affine translations on $\mathbb{R}$, the coorbit spaces
are the well-known  Besov
spaces, and the corresponding expansions and frames are the  wavelet
expansions and frames~\cite{Feichtinger1988,RU11}. (iii) For the discrete series of
$\mathrm{SL}(2,\mathbb{R})$ the coorbit spaces are  certain Bergmann spaces on the
unit disc, and coorbit theory yields  their atomic decompositions in
the sense of Coifman and Rochberg~\cite{Christensen2009,Feichtinger1988}.
 
{}From this point of view, coorbit theory represents a unifying
theory that reveals the common background of these three branches of
analysis. Clearly these examples can be  studied with 
methods specifically  tailored to these function spaces, and often
sharper results can be proved.  However,
coorbit theory did not only explain and rederive 
known results; the generality of the set-up also lead to new results
about these known classes of function spaces. For example, coorbit
theory furnished non-uniform Gabor and wavelet expansions long before
the study of non-uniform expansions became a topic of research in
function spaces.   The coorbit theory of these examples has been worked
out in fine detail, and each example now stands for an
independent direction of research in coorbit theory. 

Nowadays,  coorbit theory is also  used to define and investigate new
function spaces. The coorbit spaces attached to  certain groups of affine transformations on
$\mathbb{R}^d$, so-called shearlet groups, are new function spaces
(\emph{shearlet spaces}) for which no previous information is
known. In these cases, coorbit theory provides the initial tools for
their analysis~\cite{Dahlke2012}. 

This paper is the first attempt to develop the coorbit theory for the
discrete series of semisimple Lie groups. Motivated by the  example
$\mathrm{SL}(2,\mathbb{R})$ and Bergmann spaces in ~\cite{Christensen2009,Feichtinger1988},
our goal
is to  develop  the coorbit theory attached to the
holomorphic discrete series of the simple Lie groups $\sun $ and their 
covering groups. Realizing $\sun /S(\mathrm{U}(n)\times \mathrm{U}(1))$ as the unit ball $\nball$
in $\mathbb{C}^n$ those  representations act on Hilbert spaces of holomorphic functions on $\nball$.
The first objective is to identify
and describe the coorbit space with concrete and possibly already
known function spaces. The second objective then is to make the
abstract atomic decompositions more concrete and formulate these
explicitly. Although the step from the three-dimensional
group $\mathrm{SL}(2,\mathbb{R} ) \simeq \mathrm{SU}(1,1)$ to the higher-dimensional
groups $\sun $ is natural, the technicalities of the representation theory of semisimple groups and of
analysis in several  complex variables are much more demanding. This
may be the reason why the coorbit theory of semisimple Lie groups has mostly
been untouched territory so far.

To give the reader some idea of the main results, we state  those without insisting in precise definitions and formulations. 
For $\alpha>-1$ the weighted Bergman space
$\mathcal{A}^p_\alpha$ is the space of all holomorphic functions on $\nball$ such that
\[ \|f\|_\alpha^p=c_\alpha \int_{\nball }|f(z)|^p (1-|z|^2)^\alpha\, dv(z)<\infty\]
where $dv$ is the Lebesgue measure on $\nball $ and $c_\alpha$ is a
normalizing constant so  that the constant function 1 has norm one.
 The space $\mathcal{A}^p_\alpha $  is   a Banach space for $1\leq p\leq \infty$ 
and a Hilbert space for   $p=2$.
We let $\sigma =\alpha + n+1$ and write $\cV_\sigma =\cA_{\alpha}^2$ as $\sigma$ will show
up in other contexts.

The group $\sun$ consists of all  $(n+1)\times (n+1)$  matrices that preserve the
sesquilinear  form 
$ z,w \in \mathbb{C} ^{n+1} \to 
-z_{1}\overline{w}_{1} - \dots -
 z_{n}\overline{w}_{n}+ z_{n+1}\overline{w}_{n+1}$.
Usually,  a matrix $x \in \sun$ is written in block form   as 
$  x= \left(\begin{smallmatrix}
    a & b \\
    c^t & d
  \end{smallmatrix}\right)$. 
   As in the case $n=1$, such a
  matrix acts on $\nball $ by fractional transformations. Given a real number
   $\sigma >n$ one  can then define a representation of $\sun
  $ on functions living on $\nball $  by  
\begin{equation}\label{de:rep}
\pi _\sigma (x) f(z) =  
  \frac{1}{(-\dup{z}{b}+\overline{d})^\sigma}
  f\left( \frac{a^*z-\overline{c}}{-\dup{z}{b} +\overline{d}}\right)\,
  . 
\end{equation}
If this action is restricted to the Hilbert space $\mathcal{V}_\sigma
$, then $\pi _\sigma $ becomes an
irreducible
square-integrable unitary representation of $\sun $, if $\sigma$ is an integer, or one of its covering groups otherwise.

Our first result identifies the coorbit spaces with respect to $\pi
_\sigma $ as Bergman spaces on the unit ball. The following  is an
unweighted version of Theorem~\ref{identberg}. 
\begin{theorem}
  Let   $1\leq p< \infty $, $\sigma > 2n\max\{1/p,1-1/p \}$,   and  $\psi=1_{\nball}$.
  The representation coefficient $x \to \langle f, \pi _\sigma (x)
  \psi \rangle  $ is in $L^p(\sun )$,  if and only if $f\in \mathcal{A}^p_{\sigma p/2 -n-1} $. In other words,
  $f$ is in the coorbit of $L^p$, if and only if $f$ belongs to the
  Bergman space $\mathcal{A}^p_{\sigma p/2 -n-1}$ on $\nball $. 
\end{theorem}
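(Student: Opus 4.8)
The plan is to make the abstract matrix coefficient $V_\psi f(x):=\langle f,\pi_\sigma(x)\psi\rangle$ completely explicit and then to read off its $L^p(\sun)$-norm as a weighted Bergman norm on $\nball$, using the identification $\sun/K\cong\nball$ with $K=S(\mathrm{U}(n)\times\mathrm{U}(1))$ the stabiliser of the origin. First I would compute $\pi_\sigma(x)\psi$ directly from~\eqref{de:rep}: for $x=\left(\begin{smallmatrix}a&b\\ c^t&d\end{smallmatrix}\right)$ and $\psi=1_{\nball}$ one gets
\[
\pi_\sigma(x)\psi(z)=(-\dup{z}{b}+\overline{d})^{-\sigma}=\overline{d}^{-\sigma}\bigl(1-\dup{z}{b/d}\bigr)^{-\sigma}.
\]
Since $\cV_\sigma=\cA^2_{\sigma-n-1}$ is a reproducing kernel Hilbert space whose reproducing kernel, in the normalisation $\|1\|=1$, is $K_\sigma(z,w)=(1-\dup{z}{w})^{-\sigma}$, this says $\pi_\sigma(x)\psi=\overline{d}^{-\sigma}K_\sigma(\cdot,x\cdot 0)$, where $b/d=x\cdot 0$ is the image of the origin under the fractional linear action of $x$. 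Hence, for a holomorphic $f$, the reproducing identity gives $V_\psi f(x)=d^{-\sigma}f(x\cdot 0)$.

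Next I would use that $x\in\sun$ preserves the given form, which forces $|d|^2-|b|^2=1$ and hence $|d|^{-2}=1-|b/d|^2=1-|x\cdot 0|^2$; thus
\[
|V_\psi f(x)|=(1-|x\cdot 0|^2)^{\sigma/2}\,|f(x\cdot 0)|.
\]
This depends only on the point $x\cdot 0\in\nball$, and $x\mapsto x\cdot 0$ is precisely the quotient map $\sun\to\sun/K\cong\nball$ (its fibre over $0$ being the block-diagonal subgroup $K$). By the standard disintegration of Haar measure over $\sun/K$, whose invariant measure is $(1-|z|^2)^{-(n+1)}\,dv(z)$ up to a constant, the cocycle weight $(1-|z|^2)^{\sigma p/2}$ and this measure combine to give
\[
\|V_\psi f\|_{L^p(\sun)}^p=c\int_{\nball}|f(z)|^p(1-|z|^2)^{\sigma p/2-n-1}\,dv(z),
\]
a constant multiple of $\|f\|_{\sigma p/2-n-1}^p$. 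Both implications then follow: if $f\in\cA^p_{\sigma p/2-n-1}$ then $V_\psi f\in L^p(\sun)$, and conversely, once one knows (as in the setup for Theorem~\ref{identberg}) that the reservoir $\cS^*$ consists of holomorphic functions on $\nball$, the displayed identity forces $f\in\cA^p_{\sigma p/2-n-1}$. The covering-group case is identical, since the cover acts in the $K$-direction and leaves $|V_\psi f|$ unchanged.

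The hard part is not this computation but making it legitimate at the level of the coorbit reservoir: a priori $f$ lies only in the conjugate dual $\cS^*$, so one must (i) embed $\cA^p_{\sigma p/2-n-1}$ continuously into $\cS^*$ and (ii) check that the $\cS$--$\cS^*$ pairing $\langle f,\pi_\sigma(x)\psi\rangle$ is still computed by $d^{-\sigma}f(x\cdot 0)$. This is precisely where the hypothesis $\sigma>2n\max\{1/p,1-1/p\}$ is consumed. For $1\le p\le 2$ the binding constraint is $\sigma>2n/p$, which makes $\alpha:=\sigma p/2-n-1>-1$, so $\cA^p_\alpha$ is a genuine weighted Bergman space; for $p\ge 2$ the binding constraint is $\sigma>2n(1-1/p)=2n/p'$, and this is exactly what makes the H\"older estimate $\int_{\nball}|f(z)|(1-|z|^2)^{\sigma-n-1}\,dv(z)\le C\|f\|_\alpha$ hold --- the complementary weight $(1-|z|^2)^{p'\sigma/2-(n+1)}$ is integrable iff $\sigma p'/2>n$ --- equivalently, it is the condition that $\psi=1_{\nball}$ itself lie in $\cA^{p'}_{\sigma p'/2-n-1}$, i.e.\ in the coorbit of $L^{p'}$. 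Granting this, the pairing is represented by the absolutely convergent integral $c_{\sigma-n-1}\int_{\nball}f(z)\,\overline{\pi_\sigma(x)\psi(z)}\,(1-|z|^2)^{\sigma-n-1}\,dv(z)$, which equals $d^{-\sigma}f(x\cdot 0)$ by the reproducing formula for the weighted Bergman kernel. The remaining steps --- continuity of the embedding, absolute convergence of the integrals, and the identification of $\cS^*$ with holomorphic functions --- are routine and run parallel to the groundwork laid for Theorem~\ref{identberg}.
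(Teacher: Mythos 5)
Your proposal is correct and follows essentially the same route as the paper: the explicit formula $\pi_\sigma(x)\psi=\overline{d}^{-\sigma}K_{\sigma-n-1}(\cdot,x\cdot o)$, the identity $|\langle f,\pi_\sigma(x)\psi\rangle|=(1-|x\cdot o|^2)^{\sigma/2}|f(x\cdot o)|$, and the disintegration of Haar measure over $\sun/K\cong\nball$ are exactly Lemma~\ref{le:1andK}, Corollary~\ref{rkrep}, and the integration formula \eqref{eq:int}, and your accounting of where $\sigma>2n/p$ versus $\sigma>2n/p'$ enters matches Propositions~\ref{prop:integrability} and~\ref{prop:reproducing}. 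Be aware, though, that the step you defer as ``routine'' --- embedding $\mathcal{A}^p_{\sigma p/2-n-1}$ into $\mathcal{V}_\sigma^{-\infty}$ so that the pairing is legitimate --- is where the paper's proof of Theorem~\ref{identberg} spends most of its effort (the chain $\mathcal{A}^p_\alpha\subseteq\mathcal{A}^1_\alpha\subseteq\mathcal{A}^2_{2\alpha+n+1}$ followed by the Taylor-coefficient growth estimate $\|f_k\|_{\mathcal{V}_\sigma}^2\leq C(1+k)^n$).
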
 

In Theorem~\ref{atberg} we will formulate the atomic decompositions
for general weighted Bergman spaces. An abbreviated version with fewer
assumptions reads as follows. 
\begin{theorem}
Assume that   $1\leq p< \infty $ and $\sigma > 2n\max\{1/p,1-1/p \}$.  Then  there exist
points  $\{w_i: i\in I\} \subseteq \nball$,  
  such that  every $f\in \mathcal{A}^p_{\sigma p/2 -n-1} $ possesses the
  decomposition 
$$
f(z)= \sum _{i\in I} c_i(f) (1-|w_i|^2)^{\sigma /2} (1-\langle z,
w_i\rangle )^{-\sigma }
$$ 
with unconditional convergence in $\mathcal{A}^p_{\sigma p/2-n-1} $. 
The coefficient sequence $c$ is in $\ell ^p $ and can be chosen so
that $\|c\|_p$ is equivalent to the Bergman space norm of $f$. 
\end{theorem}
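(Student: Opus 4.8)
The plan is to obtain the atomic decomposition as a special case of the abstract coorbit machinery, then translate it into the concrete form stated. The starting point is Theorem~\ref{identberg} (the first displayed theorem above), which already identifies $\Co^{\psi}_{\cS} L^p$ with $\cA^p_{\sigma p/2 - n - 1}$ for $\psi = 1_{\nball}$. The general coorbit atomic decomposition theorem produces, for any sufficiently fine sampling set $\{x_i : i\in I\}\subseteq G$, coefficients $c_i(f)\in\ell^p$ so that $f = \sum_i c_i(f)\,\pi_\sigma(x_i)\psi$ with unconditional convergence in the coorbit space and $\|c\|_{\ell^p}\asymp \|f\|_{\cA^p}$. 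So the first step is to check that the hypotheses needed to invoke that theorem hold here: that $\pi_\sigma$ restricted to $\cV_\sigma$ is integrable (square-integrable suffices together with the admissibility/window conditions on $\psi$), that $\psi = 1_{\nball}$ is an admissible analyzing vector lying in the relevant reservoir (the space $\cS$ of "test vectors," presumably a space like $\cA^1_\beta$ for a range of $\beta$, so that reproducing-formula and kernel-decay estimates hold), and that the target space $L^p(\sun)$ together with its left-translation action fits the solid-BF-space framework. Most of this should already be in place from the proof of Theorem~\ref{identberg}, since that theorem implicitly needs the same integrability and reproducing-formula facts; the remaining point is the existence of a well-spread sampling set in $G$ fine enough for the Banach-frame/atomic-decomposition conclusion, which follows from local compactness of $\sun$ and uniform continuity estimates of $x\mapsto\pi_\sigma(x)\psi$.

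The second step is purely computational: identify $\pi_\sigma(x_i)\psi$ explicitly when $\psi = 1_{\nball}$. From the formula~\eqref{de:rep},
\[
\pi_\sigma(x)\,1_{\nball}(z) = \frac{1}{(-\dup{z}{b}+\overline{d})^\sigma}.
\]
Because the isotropy subgroup $K = S(\mathrm{U}(n)\times\mathrm{U}(1))$ fixes the origin and acts on $1_{\nball}$ essentially by a unimodular scalar (a character of $K$ to the power $\sigma$), the function $\pi_\sigma(x)1_{\nball}$ depends, up to a unimodular constant, only on the coset $xK$, i.e.\ only on the point $w = x\cdot 0\in\nball$. A direct computation with the fractional-linear action shows that for $x$ with $x\cdot 0 = w$ one has, up to that scalar,
\[
(-\dup{z}{b}+\overline{d})^{-\sigma} = (1-|w|^2)^{\sigma/2}\,(1-\dup{z}{w})^{-\sigma},
\]
which is exactly the atom appearing in the statement. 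Thus choosing $w_i := x_i\cdot 0$ for a sampling set $\{x_i\}$ as in Step~1 converts the abstract expansion $f = \sum c_i(f)\pi_\sigma(x_i)\psi$ into
\[
f(z) = \sum_{i\in I} c_i(f)\,(1-|w_i|^2)^{\sigma/2}\,(1-\dup{z}{w_i})^{-\sigma},
\]
absorbing the unimodular constants into the $c_i$; unconditional convergence in $\cA^p_{\sigma p/2 - n - 1}$ and $\|c\|_{\ell^p}\asymp\|f\|_{\cA^p}$ are inherited verbatim from the coorbit theorem.

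The third step is to make sure the sampling points $w_i$ can be taken as an honest subset of $\nball$, i.e.\ that the map $x\mapsto x\cdot 0$ carries a sufficiently dense, well-separated family in $G$ to a sufficiently dense, well-separated family in $\nball$ with respect to the Bergman (Kobayashi) metric. Since $\sun$ acts transitively on $\nball$ with compact stabilizer $K$, and the $G$-invariant metric on $\nball$ pulls back from a left-invariant metric on $G/K$, a lattice-like net in $G$ of mesh $< \varepsilon$ descends to an $\varepsilon$-net in $\nball$; conversely any separated net in $\nball$ lifts, so the covering condition required by the coorbit theorem can be matched to a geometric condition on $\{w_i\}$.

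The part I expect to be the genuine obstacle is verifying the \emph{integrability} (or the weaker Fr\'echet-space window conditions of the Christensen-type coorbit theory) for the window $\psi = 1_{\nball}$ together with the kernel-decay estimates that make the discrete reproducing formula converge in $L^p(\sun)$ for the full range $\sigma > 2n\max\{1/p, 1-1/p\}$ — the representation $\pi_\sigma$ is only square-integrable, not integrable, for $\sigma$ near the bottom of the range, so one must work in the relaxed framework and control the off-diagonal decay of the reproducing kernel $\Phi(x,y) = \langle \pi_\sigma(y)\psi, \pi_\sigma(x)\psi\rangle$, which is a function of the Bergman distance between $x\cdot 0$ and $y\cdot 0$, and show it lies in the appropriate Wiener-amalgam/convolution algebra on $\sun$ for that $p$. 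Once that kernel estimate is in hand — and it is essentially the same estimate underlying Theorem~\ref{identberg} — the atomic decomposition is a formal consequence, and the remaining work is the bookkeeping described above.
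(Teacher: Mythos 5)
Your overall route---identify $\mathcal{A}^p_{\sigma p/2-n-1}$ as a coorbit via Theorem~\ref{identberg}, invoke the abstract discretization theorem (Theorem~\ref{at}(A)), and compute $\pi_\sigma(x_i)\psi$ explicitly as $(1-|w_i|^2)^{\sigma/2}(1-\langle z,w_i\rangle)^{-\sigma}$ up to a unimodular scalar---is exactly the paper's proof of Theorem~\ref{atberg}, and your Step~2 computation matches Lemma~\ref{le:1andK} and Corollary~\ref{rkrep}. However, the two ingredients you defer are where the actual work lies, and your description of both is off in ways that would derail the argument. First, you cannot run the machinery on $\sun$ itself for general real $\sigma$: formula \eqref{de:rep} defines only a projective representation of $\sun$ when $\sigma\notin\mathbb{Z}$, and for irrational $\sigma$ the covering group $G_\sigma$ has a non-compact pullback of $K$, so integrals of $|W_\psi^\sigma(f)|^p$ over $G_\sigma$ diverge. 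The paper's fix is to work on the solvable group $S=AN$, which acts simply transitively on $\nball$ and on which $\pi_\sigma|_S$ is a genuine representation for every $\sigma>n$. The price is that $\pi_\sigma|_S$ is highly reducible, so your appeal to ``square-integrability plus admissibility'' (i.e., Duflo--Moore orthogonality) to obtain the reproducing formula \eqref{rf} is unavailable; the formula must be verified by direct computation for the specific vector $\psi=1_{\nball}$, which is Proposition~\ref{prop:reproducing}(i).

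Second, the estimate you call ``essentially the same estimate underlying Theorem~\ref{identberg}'' is not the same. Theorem~\ref{identberg} needs only the reproducing formula and the continuity of the pairing (R2), which amounts to $W_\psi^\sigma(\psi)$ lying in the K\"othe dual of the weighted $L^p$ space. The discretization additionally requires (R3): boundedness on $L^p_{\alpha+n+1-\sigma p/2}(S)$ of the convolution operators $\wf\mapsto\wf*|W_\psi^\sigma(\psi)|$ and, because Theorem~\ref{at} needs oscillation control, of $\wf\mapsto\wf*|W_\psi^\sigma(\varphi)|$ and $\wf\mapsto\wf*|W_\varphi^\sigma(\psi)|$ for the derivative vectors $\varphi=\pi^\infty(X)\psi$. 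This is a strictly stronger statement, proved in Proposition~\ref{prop:38} and Corollary~\ref{corc1} by rewriting the convolution as the Forelli--Rudin operator of Theorem~\ref{Thm:Zhu1} with $a=\sigma/2$, $b=\sigma/2-n-1$, combined with the pointwise domination $|W_\psi^\sigma(\varphi)|\le C|W_\psi^\sigma(\psi)|$ for smooth $\varphi$ from Proposition~\ref{prop:integrability}. It is precisely this step that produces the sharp condition $\sigma>2n\max\{1/p,\,1-1/p\}$ (equivalently $-1<\alpha<p(\sigma-n)-1$ for $\alpha=\sigma p/2-n-1$); without it the frame-type operator cannot be inverted and no atomic decomposition follows.
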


We also obtain frame expansions for the Bergman spaces, which is related to the existence of 
sampling sequences. We are not aware of any results in this direction except in the special
case of the unit disc \cite{feipap,pap3,pap2}, though they are 
hardly surprising considering the work by \cite{Seip2004,Duren2004}. 

The precise formulations and proofs  require quite a bit of set-up. An important
technical point is the integrability or lack thereof   of the
representations $\pi _\sigma$ for small values of $\sigma $. 
In fact, whereas  all
these representations 
are square integrable, for some of them there is a $p'$ such that
the matrix coefficients are only in $L^p$ for $p>p'>1$. In \cite{Feichtinger1989a}, and more
generally \cite{Dahlke04,Fornasier05}, integrability was crucial for
the derivation and discretization of a reproducing formula. However, this  
restriction is not necessary, as is shown in
\cite{Christensen2009,Christensen2011,Christensen2012,FG07}.
In a
sense, the concrete examples of Bergmann spaces justify and support
the considerable efforts spent to extend the scope of coorbit theory
to non-integrable and reducible representations. 

It is a curious fact that Zhu's book on Bergman spaces  does not mention $SU(n,1)$
and its representations (though the symmetry group of the unit ball
in $\mathbb{C}^n$ makes a guest appearance). In our approach, $\sun $
plays the main role for the understanding of Bergman spaces and their
atomic decompositions. It allows us to use general methods to derive specific results
for this example and guides the way for similar results
for all symmetric bounded domains. 

This paper is organized as follows: In Section~2 we recall the
definition of the Bergman spaces and the  basic
facts about $\sun $ and its discrete series. A basic lemma
(Lemma~\ref{le:1andK}) establishes the connection between the reproducing
kernel of the Bergman space and the action of the discrete series of
$\sun $ on a ``distinguished'' vector. In Section~3 we develop the
coorbit theory for the discrete series $\pi _\sigma $ of $\sun $. First we recall the
general definitions and concepts of coorbit theory. In Theorem~\ref{identberg}
we show that the coorbits with respect to  the representations $\pi
_\sigma $ are precisely  the weighted Bergman spaces  of several
complex variables on the unit ball. With this identification, the
atomic decompositions of Bergman spaces and several sampling formulas
can then be derived directly (though not without some effort) from the
abstract results in coorbit theory. 

\section{Bergman spaces on the unit ball}
\noindent
In this section we prepare the background for the complex analysis of
several variables on the unit ball and for the related representation
theory of Lie groups. 

\subsection{Bergman spaces on the unit ball}
The space $\mathbb{C}^n$ is equipped with the inner product
$\dup{w}{z} = w\cdot\overline{z}
=w_1\overline{z}_1+\dots+w_n\overline{z}_n$,  and
the open unit ball $\nball$ in $\mathbb{C}^n$ consists of 
$n$-tuples $z=(z_1,\dots,z_n)^t$
for which $|z|^2 = |z_1|^2+\dots +|z_n|^2 < 1$.
The ball  $\nball$ can be identified with the unit
ball in $\mathbb{R}^{2n}$ and thus can be equipped with the measure
$dv = 2nr^{2n-1}\,dr\,d\sigma_n$ where $d\sigma_n$ is the rotation-invariant surface
measure on the sphere $\mathbb{S}^{2n-1}$ in $\mathbb{R}^{2n}$ normalized by $\sigma_n (\mathbb{S}^{2n-1})=1$. Then
$dv$ is a rotation-invariant probability measure on $\nball$.

For $\alpha>-1$ define the probability 
measure $dv_\alpha(z) = c_\alpha (1-|z|^2)^\alpha dv(z)$ where
$c_\alpha = \frac{\Gamma(n+\alpha+1)}{n!\Gamma(\alpha+1)}$,
and let $L_\alpha^p(\nball)$ be the weighted Lebesgue space 
carrying the norm
\begin{equation*}
  \| f\|_{L^p_\alpha} = \left( \int |f(z)|^p \,dv_\alpha(z)\right)^{1/p}.
\end{equation*}
For $\alpha>-1$ the weighted Bergman space
$\mathcal{A}^p_\alpha$ is the space of all holomorphic functions on $\nball$ which are
in $L^p_\alpha(\nball)$. The space $\mathcal{A}^p_\alpha $  is closed in   $L^p_\alpha$ and
hence a Banach space for $1\leq p<\infty$, and
the space of (holomorphic) polynomials is dense in $\mathcal{A}^p_\alpha (\nball )$.

When $p=2$ the space $\mathcal{A}^2_\alpha$ is a Hilbert space
with the  inner product
\begin{equation*}
  \ip{f}{g}_\alpha = \int_{\nball} f(z)\overline{g(z)}\,dv_\alpha(z).
\end{equation*}
For a multiindex $\gamma =(\gamma_1, \ldots , \gamma_n)\in \Z_+^n$ we write
$|\gamma |=\gamma_1+\ldots +\gamma_n$ and $z^\gamma=z_1^{\gamma_1}
\cdots z_n^{\gamma_n}$. Then a simple calculation, see \cite[Lemma 1.11]{Zhu2005}, shows that
\begin{equation}\label{eq:nonZgamma}
\|z^\gamma \|_{ \alpha}^2=\frac{\gamma! \Gamma (n+\alpha+1)}{\Gamma (n+\alpha +1+|\gamma |)}\, .
\end{equation}
From this and the fact that the polynomials are dense in $\cA^2_\alpha$ one shows the
following lemma:
\begin{lemma}\label{le:repK} The functions
\[\varphi_\gamma (z)=\left(\frac{\Gamma (n+\alpha+1+|\gamma|)}{\gamma ! \Gamma (n+\alpha  +1)}\right)^{1/2}\, z^\gamma\, ,\quad
\gamma\in\Z_+^n\, \]
form a orthonormal basis for $\cA_\alpha^2$. Furthermore, if
\begin{equation}\label{eq:rk}
K_ \alpha(z,w) = \frac{1}{(1-\dup{z}{w})^{n+1+\alpha}}=\sum_{\gamma } \varphi_\gamma (z)\overline{\varphi_\gamma (w)},
\end{equation}
then
\[P_\alpha f(z)=\int_{\nball} f(w)K_\alpha (z,w)\, d\mu_\alpha (w)\]
is the orthogonal projection from $L^2_\alpha(\nball)$ onto $\cA_\alpha^2$. In particular, for $f\in \cA_\alpha^2$  we have
\[f(z)=\int_{\nball} f(w)K_\alpha (z,w)\, d\mu_\alpha (w)\, .\]
Thus $K_\alpha (z,w)$ is the reproducing kernel for $\cA_\alpha^2$.
\end{lemma}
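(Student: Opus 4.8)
The plan is to establish the four assertions in the order stated, deriving the reproducing property and the projection formula from the description of the orthonormal basis and the closed form of the kernel. \emph{Orthonormal basis.} That $\|\varphi_\gamma\|_\alpha=1$ is immediate from \eqref{eq:nonZgamma}, since the scalar factor in the definition of $\varphi_\gamma$ is exactly $\|z^\gamma\|_\alpha^{-1}$. For $\gamma\neq\gamma'$, orthogonality of $z^\gamma$ and $z^{\gamma'}$ follows from the invariance of $dv_\alpha$ under the torus action $z\mapsto(e^{i\theta_1}z_1,\dots,e^{i\theta_n}z_n)$: this gives $\ip{z^\gamma}{z^{\gamma'}}_\alpha=e^{i(\gamma-\gamma')\cdot\theta}\ip{z^\gamma}{z^{\gamma'}}_\alpha$ for every $\theta\in\mathbb{R}^n$, hence $\ip{z^\gamma}{z^{\gamma'}}_\alpha=0$ (see also \cite[Lemma 1.11]{Zhu2005}). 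Completeness is then exactly the density of the polynomials in $\cA_\alpha^2$ recalled above: the $\varphi_\gamma$ span the polynomials, so their closed linear span is all of $\cA_\alpha^2$, and $\{\varphi_\gamma\}$ is an orthonormal basis.

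\emph{Closed form of the kernel.} Writing $\varphi_\gamma(z)\overline{\varphi_\gamma(w)}=\frac{\Gamma(n+1+\alpha+|\gamma|)}{\gamma!\,\Gamma(n+1+\alpha)}\prod_{j=1}^n(z_j\overline{w_j})^{\gamma_j}$, grouping multi-indices by $|\gamma|=k$, and applying the multinomial identity $\sum_{|\gamma|=k}\frac{k!}{\gamma!}\prod_j(z_j\overline{w_j})^{\gamma_j}=\dup{z}{w}^k$, the double series collapses to $\sum_{k\geq0}\frac{\Gamma(n+1+\alpha+k)}{k!\,\Gamma(n+1+\alpha)}\dup{z}{w}^k$. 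Since $|\dup{z}{w}|<1$ for $z,w\in\nball$ by Cauchy--Schwarz, the generalized binomial series $(1-t)^{-s}=\sum_{k\geq0}\frac{\Gamma(s+k)}{k!\,\Gamma(s)}t^k$ with $s=n+1+\alpha$ applies and identifies the sum with $(1-\dup{z}{w})^{-(n+1+\alpha)}$. In particular this series converges locally uniformly on $\nball\times\nball$, so for fixed $w$ the function $K_\alpha(\cdot,w)$ lies in $\cA_\alpha^2$ with $\|K_\alpha(\cdot,w)\|_\alpha^2=K_\alpha(w,w)=(1-|w|^2)^{-(n+1+\alpha)}<\infty$.

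\emph{Projection and reproducing formula.} Because $\overline{K_\alpha(w,z)}=K_\alpha(z,w)$ and $K_\alpha(\cdot,z)\in\cA_\alpha^2$, Fubini's theorem gives, for every $f\in L^2_\alpha$, that $P_\alpha f(z)=\int_{\nball}f(w)K_\alpha(z,w)\,dv_\alpha(w)=\ip{f}{K_\alpha(\cdot,z)}_\alpha$. Expanding $K_\alpha(\cdot,z)=\sum_\gamma\overline{\varphi_\gamma(z)}\,\varphi_\gamma$ (convergent in $\cA_\alpha^2$ since $\sum_\gamma|\varphi_\gamma(z)|^2=K_\alpha(z,z)<\infty$) and using the continuity of point evaluation on $\cA_\alpha^2$ --- a standard consequence of the sub-mean-value inequality for the subharmonic function $|f|^2$ --- yields $P_\alpha f(z)=\sum_\gamma\ip{f}{\varphi_\gamma}_\alpha\,\varphi_\gamma(z)$. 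The right-hand side is precisely the expansion of the orthogonal projection of $f$ onto the closed span of the orthonormal basis $\{\varphi_\gamma\}$, i.e.\ onto $\cA_\alpha^2$; hence $P_\alpha$ is that orthogonal projection. Restricting to $f\in\cA_\alpha^2$ the same identity becomes $f(z)=\ip{f}{K_\alpha(\cdot,z)}_\alpha=\int_{\nball}f(w)K_\alpha(z,w)\,dv_\alpha(w)$, so $K_\alpha$ is the reproducing kernel.

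I expect the only genuine subtlety to lie in the measure-theoretic justifications --- the interchange of sum and integral (equivalently, $K_\alpha(\cdot,z)\in\cA^2_\alpha$) and the passage from norm convergence to pointwise convergence via continuity of point evaluations; everything else is bookkeeping built on \eqref{eq:nonZgamma}, the density of polynomials, and the binomial series.
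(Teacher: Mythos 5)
Your proof is correct and follows exactly the route the paper indicates (the lemma is stated there without proof, as a consequence of \eqref{eq:nonZgamma}, the density of the polynomials, and the standard facts from \cite[Lemma 1.11]{Zhu2005}): orthonormality from the norm computation plus torus invariance, the multinomial/binomial resummation of the kernel, and the reproducing property via continuity of point evaluations. Your write-up simply supplies the details the paper omits, and the measure-theoretic points you flag (convergence of $K_\alpha(\cdot,w)$ in $\cA^2_\alpha$ and the use of point evaluations) are handled properly.
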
 
Note that $\overline{K_\alpha (z,w)}=K_\alpha (w,z)$.
If $w\in\nball$, then
$|K_\alpha (z,w)|\le (1-|w|^2)^{-(n+1+\alpha)}<\infty$ for  all  $z\in \nball$,
and therefore the function 
$z\mapsto K_\alpha (z,w)$ is in all of the spaces  $\cA^p_\alpha$, 
$1\leq p < \infty$.
%

\subsection{Groups acting  on the unit ball and integral formulas}

The group $\sun$ consists of all  matrices with determinant one that preserve the
sesquilinear  form 
\[ \dup{J_{n,1}z}{w}
=  - z_{1}\overline{w}_{1} - \dots - z_{n}\overline{w}_{n}+z_{n+1}\overline{w}_{n+1}\, ,\]
where 
$$
J_{n,1} = \begin{pmatrix}
    -I_n & 0 \\
    0 & 1
  \end{pmatrix}.
$$ 
We usually write a matrix  $x$ in $\sun$ in block form as 
\begin{equation*}
  x= \begin{pmatrix}
    a & b \\
    c^t & d
  \end{pmatrix},
\end{equation*}
where $a$ is an $n\times n$-matrix with complex entries,  $b,c\in
\mathbb{C}^n$, and  $d\in \mathbb{C}$. Then by definition  $x$
is in $\sun$ if and only if $\det x =1$ and
$x^*J_{n,1}x=J_{n,1}$.
This immediately gives that 
\begin{equation}\label{eq:inverse}
x^{-1} = J_{n,1}x^*J_{n,1} = \begin{pmatrix}
    a^* & -\overline{c} \\
    -\overline{b}^t & \overline{d}
  \end{pmatrix}.
\end{equation}
The group $\sun$ acts transitively on $\nball$ via the fractional linear transformations
\begin{equation*}
  \begin{pmatrix}
    a & b \\
    c^t & d
  \end{pmatrix}\cdot z = \frac{az+b}{\dup{c}{\overline{z}} + d},
\end{equation*}
We denote the origin in $\mathbb{C}^n$ by $o$.

Next define the subgroups $K$, $A$,  and $N$ of $\sun$ by
\begin{align*}
  K &= \left\{ \left.  u_k=\begin{pmatrix}
      k & 0 \\
      0 & \overline{\det(k)}
    \end{pmatrix} \,\right|\,  k \in U(n)\right\}\simeq U(n) \\
  A &= 
  \left\{  \left. a_t= \begin{pmatrix}
      \cosh(t) & 0 & \sinh(t)\\
      0 & I_{n-1} & 0 \\
      \sinh(t) & 0 &\cosh(t)
    \end{pmatrix} \,\right|\, t\in\mathbb{R}\right\}\simeq \mathbb R \\
  N &= 
  \left\{\left. n_{z,s} = \begin{pmatrix}
      1-|z|^2/2 + is & z^T & |z|^2/2-is\\
      -\overline{z} & I_{n-1} & \overline{z} \\
      is-|z|^2/2 & z^T & 1+|z|^2/2-is
    \end{pmatrix} \,\right|\, z\in \mathbb{C}^{n-1},\, s\in\mathbb{R}\right\} \\
\end{align*}
{}From now on we let $S=AN$ and write $G=\sun$. The following is well known
\cite{Helgason1978,Knapp1986}:
\begin{theorem}[Two Integral Formulas] \label{th:2.3} The following holds:
\begin{enumerate}
\item The map $A\times N\times K \to G$, $(a,n,k)\mapsto ank$, is an
analytic diffeomorphism.
\label{th:2.3-1}
\item The Haar measures on $K, A, N$  can be  normalized such that for all $f\in L^1(G)$
\[\int_G f(x)\, dx=\int_A\int_N\int_K f(ank)\, dkdnda=\int_S\int_K f(hk)\, dkdh\, .\]
\label{th:2.3-2}
\item Both $G$ and $S$ act transitively on $\nball$. 
The stabilizer of $o$ in $G$ is
$K$ and  $S$ acts freely. Thus the mapping $h\mapsto h\cdot o$ defines  equivariant 
maps from $S $ and $G$ onto $\nball$ and yields the diffeomorphisms 
\[\nball \simeq G/K \simeq S\, .\]
\label{th:2.3-3}
\item The group $S$ is simply connected and solvable.
\label{th:2.3-4}
\end{enumerate}
\end{theorem}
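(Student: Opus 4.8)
The plan is to recognize all four assertions as the specialization to $G=\sun$ of the Iwasawa decomposition of a semisimple Lie group together with its attached integral formula; the only real work is matching the explicit matrix subgroups $K$, $A$, $N$ with the abstract Iwasawa data. First I would fix a Cartan decomposition $\fg=\fk\oplus\mathfrak{p}$ of $\fg=\mathfrak{su}(n,1)$, with $\fk=\mathrm{Lie}(K)$, and take the line $\fa=\mathbb{R}X_0\subset\mathfrak{p}$ spanned by $X_0=\frac{d}{dt}\big|_{0}a_t$; since $\sun$ has real rank one, $\fa$ is maximal abelian in $\mathfrak{p}$. Next I would compute the restricted-root decomposition $\fg=\fg_{-2\lambda}\oplus\fg_{-\lambda}\oplus\fg_{0}\oplus\fg_{\lambda}\oplus\fg_{2\lambda}$ for this $\fa$, with $\dim\fg_{\pm\lambda}=2(n-1)$ and $\dim\fg_{\pm2\lambda}=1$, set $\fn=\fg_{\lambda}\oplus\fg_{2\lambda}$, and verify by a direct matrix computation that $\fn$ is the Lie algebra of the group $N$ displayed above --- a Heisenberg algebra of real dimension $2n-1$ --- and that $\fg=\fn\oplus\fa\oplus\fk$ as a vector space.

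Granting this, part (\ref{th:2.3-1}) is immediate from the standard Iwasawa theorem (\cite{Helgason1978,Knapp1986}): the product map $N\times A\times K\to G$ is an analytic diffeomorphism, and since $\fa$ normalizes $\fn$ the set $S=AN=NA$ is a closed connected subgroup, so $(a,n,k)\mapsto ank=(an)k$ is an analytic diffeomorphism of $A\times N\times K$ onto $G$ as well. Part (\ref{th:2.3-2}) is the integral formula attached to the resulting $SK$-decomposition: because $G$ is semisimple it is unimodular and $K$ is compact, so the decomposition of Haar measure along $G=S\cdot K$ (with $S\cap K=\{e\}$) gives $\int_G f(x)\,dx=\int_S\int_K f(hk)\,dk\,dh$ for a suitably normalized left Haar measure $dh$ on $S$; and a short computation shows that the left Haar measure of $S=AN$ is, up to this normalization, $da\,dn$, which yields the first displayed equality. (Equivalently, this is the Iwasawa integral formula, with the modular function of $S$ playing the role of the usual factor $e^{2\rho(\log a)}$.)

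For (\ref{th:2.3-3}) I would argue directly on the ball. From the fractional-linear action one computes $\left(\begin{smallmatrix}a&b\\c^{t}&d\end{smallmatrix}\right)\cdot o=b/d$, so a matrix fixing $o$ has $b=0$; feeding $b=0$ into $x^{*}J_{n,1}x=J_{n,1}$ forces $c=0$, and then the block form shows $x\in K$, so $K$ is exactly the stabilizer of $o$. Transitivity of $G$ on $\nball$ is classical --- this is the realization of the bounded symmetric domain $G/K$ --- and combining $G=SK$ with $K\cdot o=\{o\}$ gives $S\cdot o=G\cdot o=\nball$, so $S$ is transitive as well; if $h\in S$ fixes $o$ then $h\in S\cap K=\{e\}$ by uniqueness in the Iwasawa decomposition, hence $S$ acts freely. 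Therefore $h\mapsto h\cdot o$ is an equivariant diffeomorphism and $\nball\simeq G/K\simeq S$. Part (\ref{th:2.3-4}) then follows because $N$ is a simply connected nilpotent (Heisenberg) group and $A\simeq\mathbb{R}$ normalizes it, so $S=A\ltimes N$ is diffeomorphic to $\mathbb{R}\times\mathbb{R}^{2n-1}$ and is thus simply connected, and it is solvable since $[\mathfrak{s},\mathfrak{s}]\subseteq\fn$ is nilpotent.

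The main obstacle is the single genuinely computational point: explicitly identifying the abstract Iwasawa subgroups with the matrices $K$, $A$, $N$ written in the statement, i.e.\ computing the restricted root spaces of $\mathfrak{su}(n,1)$ for the chosen $\fa$ and recognizing $\fg_{\lambda}\oplus\fg_{2\lambda}$ as the tangent algebra of the indicated Heisenberg subgroup (and checking that the Haar-measure normalizations are mutually consistent). This is routine linear algebra; once it is settled, all four parts are direct consequences of the Iwasawa decomposition and its integral formula, which is why the statement can fairly be called ``well known.''
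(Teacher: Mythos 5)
Your proposal is correct, and it follows exactly the route the paper intends: the paper gives no proof of Theorem~\ref{th:2.3}, declaring it well known and citing \cite{Helgason1978,Knapp1986}, and your argument is precisely the standard Iwasawa-decomposition proof (with the $SK$ integral formula, the stabilizer computation at $o$, and $S=A\ltimes N\simeq\mathbb{R}^{2n}$) found in those references. The only work you defer --- matching the displayed matrix groups $K$, $A$, $N$ with the abstract Iwasawa data --- is indeed routine and is also left implicit in the paper.
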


Right $K$-invariant functions on $G$ (or, equivalently,  functions on $S$) correspond bijectively to
functions on $\nball$   via the identification 
\[\widetilde{f}(x)=f(x\cdot o)\, .\]
This identification induces a  $\sun$-invariant measure   on
$\nball$, which, up to a multiplicative  constant, 
is given by
\begin{equation*}
  f\mapsto \int_{\nball} f(z) \, \frac{dv(z)}{(1-|z|^2)^{n+1}}\, .
\end{equation*}
 We normalize this measure  so that
\begin{equation}\label{eq:int}
\int_{\nball} f(z)\,  \frac{dv(z)}{(1-|z|^2)^{n+1}}=\int_G \wf(x)\, dx=\int_S \wf(x)\, dx\, .
\end{equation}
We then obtain the following isometry between  $L^p$-spaces. 
\begin{lemma} Let $f\in L^p_\alpha (\nball)$ and let $\widetilde{f}(x)=f(x\cdot o)$. Then
\begin{equation*}
 \label{eq:normequiv}
  \| f\|_{L^p_\alpha(\nball)} ^p
  :=    \int_{\nball} |f(z)|^p \,dv_\alpha(z) 
  = c_\alpha \int_G  |\widetilde{f}(x)|^p 
    (1-|x\cdot o|^2)^{\alpha+n+1}\,dx  := c_\alpha 
  \| \wf \|_{L^p_{\alpha+n+1}(G)}^p\,  
\end{equation*}
and similarly for the group $S$.
\end{lemma}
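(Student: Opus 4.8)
The plan is to unwind the change of variables that identifies right-$K$-invariant functions on $G$ with functions on $\nball$, and then to carry the $L^p_\alpha$-norm through this identification using formula~\eqref{eq:int}. First I would observe that $\widetilde f$ is indeed right $K$-invariant, since $(xk)\cdot o = x\cdot (k\cdot o) = x\cdot o$ because $K$ is the stabilizer of the origin; hence $\widetilde f$ descends to a function on $G/K \simeq S \simeq \nball$ and everything is consistent.

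Next I would write out $\|f\|_{L^p_\alpha(\nball)}^p = c_\alpha \int_{\nball} |f(z)|^p (1-|z|^2)^\alpha\, dv(z)$ and insert the factor $(1-|z|^2)^{n+1}$ in a compensating way:
\begin{equation*}
c_\alpha \int_{\nball} |f(z)|^p (1-|z|^2)^{\alpha+n+1}\, \frac{dv(z)}{(1-|z|^2)^{n+1}}\, .
\end{equation*}
The integrand $z \mapsto |f(z)|^p (1-|z|^2)^{\alpha+n+1}$ is a function on $\nball$ whose pullback under $x\mapsto x\cdot o$ is exactly $|\widetilde f(x)|^p (1-|x\cdot o|^2)^{\alpha+n+1}$, which is right $K$-invariant. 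Applying the normalization~\eqref{eq:int} (with $\widetilde{\,\cdot\,}$ denoting this pullback) converts the integral over $\nball$ against $dv(z)/(1-|z|^2)^{n+1}$ into an integral over $G$ against $dx$, and by Theorem~\ref{th:2.3}\eqref{th:2.3-2} also into an integral over $S$ against $dh$. This gives precisely
\begin{equation*}
c_\alpha \int_G |\widetilde f(x)|^p (1-|x\cdot o|^2)^{\alpha+n+1}\, dx = c_\alpha \|\widetilde f\|_{L^p_{\alpha+n+1}(G)}^p\, ,
\end{equation*}
and the same with $G$ replaced by $S$.

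The only genuine point requiring care — and the step I would flag as the ``main obstacle,'' though it is more bookkeeping than difficulty — is making sure the weight exponent is tracked correctly: the $G$-invariant measure on $\nball$ carries the factor $(1-|z|^2)^{-(n+1)}$, so the $\alpha$-weight on $\nball$ becomes an $(\alpha+n+1)$-weight once expressed relative to the invariant measure, which is why $\widetilde f$ lands in $L^p_{\alpha+n+1}(G)$ rather than $L^p_\alpha(G)$. One should also note that $|x\cdot o|^2 < 1$ for all $x \in G$ since $G$ acts on $\nball$, so the weight $(1-|x\cdot o|^2)^{\alpha+n+1}$ is well-defined and the identity is an honest equality of (possibly infinite) quantities, valid for all measurable $f$ and not merely for $f \in L^p_\alpha$; in particular $f \in L^p_\alpha(\nball)$ if and only if $\widetilde f \in L^p_{\alpha+n+1}(G)$, so the map $f \mapsto \widetilde f$ is the claimed isometry.
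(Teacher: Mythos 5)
Your proof is correct and follows exactly the route the paper intends: the lemma is an immediate consequence of the normalization \eqref{eq:int} applied to the nonnegative right-$K$-invariant function $z\mapsto |f(z)|^p(1-|z|^2)^{\alpha+n+1}$, after splitting the weight as $(1-|z|^2)^\alpha=(1-|z|^2)^{\alpha+n+1}(1-|z|^2)^{-(n+1)}$. The paper states the lemma without proof precisely because this is the whole argument, so nothing further is needed.
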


The integration formula (\ref{eq:int}) shows that for suitable functions the convolution on $G$
respectively $S$ can be written as integration over the ball. For
further reference, we state
this fact as a lemma. 
\begin{lemma} \label{conv} Let $f$ and $g$ be functions on $\nball$ and $\wf$ and
  $\wg$ be the corresponding  functions on either $G$ 
or $S$. Let
$\wg^\vee (x)=\wg(x ^{-1})$. Then the
following holds:
\begin{enumerate}
\item  Assume that $\wf$ is right $K$-invariant and $\wg$ is
  $K$-biinvariant on $G$. Then
\[(\wf*\wg ) (x)=\int_{\nball} f(z)g^\vee (x^{-1}z)\frac{dv
  (z)}{(1-|z|^2)^{n+1}}\, ,\]
whenever the convolution on $G$ is defined. 
\item In the case of $S$ we have
\[(\wf*\wg )(x)=\int_{\nball} f(z)g^\vee (x^{-1}z)\frac{dv (z)}{(1-|z|^2)^{n+1}}\, .\]
\end{enumerate}
\end{lemma}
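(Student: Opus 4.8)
The plan is to unwind both sides of the claimed identity using the integration formula~\eqref{eq:int} and the definition of convolution on $G$ (resp.\ $S$), and then to verify that the two expressions agree pointwise. First I would recall that for functions $F,H$ on $G$ the convolution is $(F*H)(x)=\int_G F(y)H(y^{-1}x)\,dy$, and substitute $F=\wf$, $H=\wg$. The key observation is that when $\wf$ is right $K$-invariant, the integrand $\wf(y)\wg(y^{-1}x)$, viewed as a function of $y$, need not be right $K$-invariant in general; however, if in addition $\wg$ is \emph{left} $K$-invariant (which is part of being $K$-biinvariant in case (i), and automatic on $S$ since there is no $K$ in case (ii)), then replacing $y$ by $yk$ changes $\wg(y^{-1}x)$ to $\wg(k^{-1}y^{-1}x)=\wg(y^{-1}x)$, so the whole integrand is right $K$-invariant in $y$. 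This is exactly what is needed to push the integral over $G$ down to an integral over $G/K\simeq\nball$ via~\eqref{eq:int}.

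Next I would carry out that descent explicitly. Writing $y=y\cdot o=:z\in\nball$ under the identification $G/K\simeq\nball$, the right-$K$-invariant function $y\mapsto\wf(y)\wg(y^{-1}x)$ corresponds to the function $z\mapsto f(z)\,g^\vee(x^{-1}\cdot z)$ on $\nball$: indeed $\wf(y)=f(y\cdot o)=f(z)$ by definition of the tilde correspondence, and $\wg(y^{-1}x)=\wg((x^{-1}y)^{-1})=\wg^\vee(x^{-1}y)=g^\vee((x^{-1}y)\cdot o)=g^\vee(x^{-1}\cdot z)$, using that $\wg^\vee$ is right $K$-invariant (because $\wg$ is left $K$-invariant, hence $\wg^\vee(xk)=\wg(k^{-1}x^{-1})=\wg(x^{-1})=\wg^\vee(x)$) so that it legitimately descends to a function $g^\vee$ on the ball, and equivariance of $z\mapsto x^{-1}\cdot z$. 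Then~\eqref{eq:int} turns $\int_G$ into $\int_{\nball}(\,\cdot\,)\,\frac{dv(z)}{(1-|z|^2)^{n+1}}$, giving precisely the stated formula. Case (ii) for $S$ is the same computation but cleaner, since $G/K$ is replaced by the diffeomorphism $S\simeq\nball$ from Theorem~\ref{th:2.3}\eqref{th:2.3-3} and there are no $K$-invariance hypotheses to track; one only needs the second equality in~\eqref{eq:int}.

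I expect the main obstacle to be purely bookkeeping: keeping straight which side $K$-invariance is needed on, and checking that the substitution $z=y\cdot o$ is compatible with the normalization in~\eqref{eq:int} so that no stray constant appears. One should also be slightly careful about the domain of validity — the identity holds "whenever the convolution on $G$ is defined," so I would invoke Fubini/absolute convergence only after noting that the descent of the measure is an honest pushforward, which is already recorded in~\eqref{eq:int}. Beyond that, the proof is a direct change of variables and requires no new ideas.
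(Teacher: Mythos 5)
Your argument is correct and is exactly the intended one: the paper states this lemma without proof as an immediate consequence of the integration formula~\eqref{eq:int}, and your write-up simply fills in that descent. You correctly identify the one point that actually needs checking, namely that right $K$-invariance of $\wf$ together with left $K$-invariance of $\wg$ makes $y\mapsto\wf(y)\wg(y^{-1}x)$ right $K$-invariant (and $\wg^\vee$ right $K$-invariant, so that $g^\vee$ is well defined on the ball), after which $\wg(y^{-1}x)=\wg^\vee(x^{-1}y)=g^\vee(x^{-1}\cdot z)$ and~\eqref{eq:int} give the formula, with the $S$ case following from the diffeomorphism $S\simeq\nball$ with no invariance hypotheses.
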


\subsection{Bergman spaces and representations}

In this section we introduce the holomorphic discrete series representations
of $\mathrm{SU}(n,1)$ and its coverings. We also discuss the smooth vectors and the distribution vectors 
of those representations. This is needed to realize the Bergman spaces
as coorbits.  We refer to \cite{FK90,HC55,NW79} for details and proofs. Readers with basic knowledge of
representation theory can skip this subsection and use it as a reference as needed later on.

The action of $\sun$ on $\nball$ yields, for each 
$\sigma >n$,  an irreducible unitary (projective) 
representation of $\sun$
on the Hilbert space $\mathcal{V}_\sigma=\mathcal{A}^2_{\sigma-n-1}$
by 
\begin{equation}\label{de:rep}
\pi _\sigma (x) f(z) =   \pi_\sigma
  \begin{pmatrix}
    a & b \\
    c^t & d
  \end{pmatrix}
  f(z) = \frac{1}{(-\dup{z}{b}+\overline{d})^\sigma}
  f\left( \frac{a^*z-\overline{c}}{-\dup{z}{b} +\overline{d}}\right)\, .
\end{equation}
When $\sigma$ is not an integer, then, because of the factor
$(-\dup{z}{b}+\overline{d})^\sigma$, $\pi_{\sigma}$ is no longer a 
representation of $G$, but it  can  always be lifted to  a representation of the universal covering
$\widetilde{G}$ of $G$ because the pullback of $K$ in $\widetilde G$ is simply connected.
If $\sigma$ is rational, then there exists a finite  covering $G_\sigma$ of $G$ such that
$\pi_\sigma$ is a well defined representation of $G_\sigma$. If $\sigma$ is irrational, then we set
$G_\sigma =\widetilde G$. Then $G_\sigma$ has finite center $Z_\sigma$ and the pullback $K_\sigma$ of
$K$ in $G_\sigma$ is compact, if and only if $\sigma$ is rational.
However, as the subgroup $S$ is simply connected and hence isomorphic to its pullback in $\widetilde G$,
the restriction $\left.\pi_\sigma\right|_S$ of $\pi_\sigma$ to $S$ is a well defined
representation of $S$ for all $\sigma>n$.  To simplify the
exposition we will carry out most  computations for $S$ and denote $\left. \pi_\sigma\right|_S$ simply by
$\pi_\sigma$.

 For each $z\in Z_\sigma$ the operator $\pi_\sigma (z)$ commutes with
all $\pi_\sigma (x)$, $x\in G_\sigma$. As $\pi_\sigma$ is irreducible and unitary it follows by
Schur's Lemma that there exists a homomorphism
$\chi_\sigma : Z_\sigma \to \mathbb{T}$ such that $\pi_\sigma
(z)=\chi_\sigma (z)\, \mathrm{id}$. Hence for
$f,g\in \mathcal{V}_\sigma$, $x\in G_\sigma$, and $z\in Z_\sigma$
we get
\[|\ip{f}{\pi_\sigma (xz)g}|= |\overline{\chi_\sigma (z)} \ip{f}{\pi_\sigma (x)g}|
=|\ip{f}{\pi_\sigma (x)g}|\, .\]
It follows that the absolute value of the wavelet coefficient 
$$W_g^\sigma(f)(x)=\ip{f}{\pi_\sigma (x)g}$$
is in
fact a well defined function on $G_\sigma/Z_\sigma$ and hence a well defined function on $\sun$.

\begin{theorem}\label{th:2.7} 
Let $\sigma >n$. Then 
\begin{enumerate}
\item Let  $f,g\in \mathcal{V}_\sigma$. 
  Then $|W_g^\sigma(f)|$ is square integrable
  on $\sun$. 
\item 
$|W_g^\sigma(g)|$ is
  integrable 
  on $SU(n,1)$ for all   $g$ in a dense $G_\sigma$-invariant 
  subspace of $\mathcal{V}_\sigma$,  if and only if $\sigma >2n$.
  \end{enumerate}
 \end{theorem}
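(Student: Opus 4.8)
The plan is to reduce everything to a concrete integral over the ball using the identification of right-$K$-invariant wavelet coefficients with functions on $\nball$. First, I would pick a distinguished cyclic vector, namely $\psi = 1_{\nball} = \varphi_0$, the constant function in $\mathcal V_\sigma = \mathcal A^2_{\sigma-n-1}$. Its orbit under $\pi_\sigma$ reproduces the Bergman kernel: using \eqref{de:rep} one computes $\pi_\sigma(x)\psi(z) = (-\dup{z}{b}+\overline d)^{-\sigma}$, and since $h\cdot o$ for $h = \left(\begin{smallmatrix} a & b\\ c^t & d\end{smallmatrix}\right) \in S$ runs over $\nball$ with $1-|h\cdot o|^2$ comparable to $|\overline d|^{-2}$ (this is the content of the basic lemma referenced as Lemma~\ref{le:1andK}), the reproducing property in Lemma~\ref{le:repK} gives
\[
W_\psi^\sigma(f)(h) = \ip{f}{\pi_\sigma(h)\psi}_{\sigma-n-1}
= \overline{f(h\cdot o)}\,(1-|h\cdot o|^2)^{\sigma/2}\cdot (\text{unimodular factor}),
\]
so that $|W_\psi^\sigma(f)(h)| = |\wf(h)|\,(1-|h\cdot o|^2)^{\sigma/2}$ where $\wf(h) = f(h\cdot o)$. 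In particular $|W_\psi^\sigma(\psi)(h)| = (1-|h\cdot o|^2)^{\sigma/2}$.

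Next, I would translate the $L^p(G)$ norm of this coefficient into a Bergman-type integral over $\nball$ using the integration formula \eqref{eq:int}: since $|W_\psi^\sigma(\psi)|$ is right $K$-invariant,
\[
\int_G |W_\psi^\sigma(\psi)(x)|^p\, dx
= \int_S (1-|h\cdot o|^2)^{p\sigma/2}\, dh
= \int_{\nball} (1-|z|^2)^{p\sigma/2}\,\frac{dv(z)}{(1-|z|^2)^{n+1}}
= \int_{\nball} (1-|z|^2)^{p\sigma/2-n-1}\, dv(z).
\]
This last integral is finite precisely when $p\sigma/2 - n - 1 > -1$, i.e. $\sigma > 2n/p$. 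For part (i), applying this with $p=2$ gives finiteness iff $\sigma > n$, which holds by hypothesis; the general statement for arbitrary $f, g \in \mathcal V_\sigma$ then follows from the orthogonality relations for square-integrable representations (the standard argument: $W_g^\sigma(f) \in L^2$ whenever one such coefficient with $g$ replaced by the admissible vector $\psi$ is in $L^2$, extended by density and the fact that $\pi_\sigma$ is irreducible square-integrable — $\psi$ is admissible since its diagonal coefficient is in $L^2$). For part (ii), the computation above shows $W_\psi^\sigma(\psi) \in L^1(G)$ iff $\sigma > 2n$, giving the "if" direction with $\psi$ itself generating (a dense subspace of) the required vectors; one then propagates integrability to the $G_\sigma$-invariant subspace spanned by $\pi_\sigma(G_\sigma)\psi$ using that convolution with $W_\psi^\sigma(\psi) \in L^1$ preserves $L^1$ (Lemma~\ref{conv} and the covariance $W_\psi^\sigma(\pi_\sigma(y)\psi) = L_y W_\psi^\sigma(\psi)$, or a similar reproducing-formula argument). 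The "only if" direction requires showing that if \emph{some} nonzero vector has an integrable diagonal coefficient then $\sigma > 2n$: here I would invoke that all diagonal matrix coefficients of an irreducible unitary representation decay at the same polynomial rate along $A$, and estimate $|W_g^\sigma(g)(a_t)|$ from below, or more cleanly observe that integrability of one diagonal coefficient forces the formal dimension / $L^1$-admissibility which pins down the threshold; alternatively, since $\pi_\sigma$ restricted to $S$ and the explicit kernel computation already show the diagonal coefficient of $\psi$ behaves like $(1-|a_t\cdot o|^2)^{\sigma/2} \asymp e^{-\sigma|t|}$ against Haar measure $dh$ that grows like $e^{2n|t|}$ in the relevant direction, integrability near $t\to\infty$ fails exactly when $\sigma \le 2n$, and a comparison argument transfers this lower bound to any other vector.

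The main obstacle I anticipate is the "only if" part of (ii): while the "if" direction is a direct integral computation with the distinguished vector $\psi$, showing that \emph{no} vector can have an integrable diagonal coefficient when $\sigma \le 2n$ requires either a uniform lower bound on the decay of all matrix coefficients of the irreducible representation $\pi_\sigma$ (a Harish-Chandra-type estimate, for which the reference \cite{HC55} would be used), or an argument that the reproducing formula $f = W_\psi^\sigma(\psi)^{-1} * \cdots$ would force $\psi$ itself to have integrable coefficients, contradicting the threshold. A secondary technical point is verifying that the dense subspace in (ii) — which should be taken as (a completion-compatible version of) the Gårding space or the span of $\{\pi_\sigma(y)\psi : y \in G_\sigma\}$ — is genuinely $G_\sigma$-invariant and consists of vectors with integrable coefficients; this follows once one knows $W_\psi^\sigma(\psi) \in L^1$ by the convolution relation $W_\psi^\sigma(\pi_\sigma(y)\psi) = W_\psi^\sigma(\psi)(y^{-1}\,\cdot\,)$ together with the reproducing identity $W_\psi^\sigma(f) = W_\psi^\sigma(f) * W_\psi^\sigma(\psi)$ valid after normalizing $\psi$, but the bookkeeping around the covering group $G_\sigma$, its center $Z_\sigma$, and the passage to functions on $\sun$ needs care.
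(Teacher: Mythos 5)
The paper does not prove Theorem~\ref{th:2.7} at all: it is quoted from the literature (the subsection opens by referring to \cite{FK90,HC55,NW79} ``for details and proofs''), and what the paper later verifies by hand is only the special case of the vector $\psi=1_{\nball}$ (Corollary~\ref{rkrep}, Proposition~\ref{resttos}, Proposition~\ref{prop:integrability}). Your computation for that special case is exactly the one the paper performs: $|W_\psi^\sigma(\psi)(x)|=(1-|x\cdot o|^2)^{\sigma/2}$ and $\int_{\nball}(1-|z|^2)^{p\sigma/2-n-1}\,dv(z)<\infty$ iff $\sigma>2n/p$. From there, part (i) does follow from Godement's theorem for irreducible unitary representations of unimodular groups (one nonzero $L^2$ matrix coefficient forces all of them into $L^2$), and the ``if'' half of (ii) follows from the orbit argument: the span of $\{\pi_\sigma(y)\psi\}$ is dense, $G_\sigma$-invariant, and its diagonal coefficients are finite sums of the translates $x\mapsto W_\psi^\sigma(\psi)(y_j^{-1}xy_k)$, each integrable by bi-invariance of Haar measure. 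Two points you should make explicit: integrability of $W_g^\sigma(g)$ for a \emph{linear combination} $g$ requires controlling the cross terms $W_{\pi(y_j)\psi}^\sigma(\pi(y_k)\psi)$, not just the diagonal coefficients of the summands (unimodularity handles them, but the step is not automatic); and for irrational $\sigma$ the center $Z_\sigma$ of $G_\sigma=\widetilde G$ is infinite, so ``square integrable'' must be read modulo the center, with the statement living on $\sun$ as in the paper's set-up.

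The genuine gap is the ``only if'' in (ii), which you flag but do not close, and none of the three routes you sketch is a proof as stated. The assertion that ``all diagonal matrix coefficients of an irreducible unitary representation decay at the same polynomial rate along $A$'' is not a usable principle for arbitrary vectors: the hypothesis only supplies a dense $G_\sigma$-invariant subspace, not $K$-finite vectors, and a priori a cleverly chosen $g$ could produce cancellation and faster decay of $\langle g,\pi_\sigma(a_t)g\rangle$ than the $e^{-\sigma t}$ exhibited by $\psi$. What is actually needed is Harish-Chandra's integrability criterion for the discrete series: the $K$-isotypic coefficients $\langle g_\mu,\pi_\sigma(a_t)g_\nu\rangle$ have leading exponent $e^{-\sigma t}$ independent of the $K$-types, against Haar density $\asymp e^{2nt}$ in $KA^+K$ coordinates; one extracts isotypic components from an arbitrary $g$ in the dense subspace via $\int_{K\times K}|W_g^\sigma(g)(ka_tk')|\,dk\,dk'\gtrsim|\langle g_\mu,\pi_\sigma(a_t)g_\nu\rangle|$ and then applies the asymptotic \emph{lower} bound. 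That lower bound is precisely the content the paper outsources to \cite{HC55}; without it, or an equivalent explicit estimate of the hypergeometric-type functions that arise, your ``only if'' remains an assertion rather than a proof.
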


The following lemma reveals the fundamental
connection between the representation theory of $SU(n,1)$ and the
sampling theory in Bergman spaces.  
We fix the notation $\psi=1_{\nball}$, the characteristic function of the ball.
\begin{lemma}\label{le:1andK}
Let $x=\begin{pmatrix}
    a & b \\
    c^t & d
  \end{pmatrix}\in \sun$ and $f\in\cV_\sigma$. Define
  $w=x\cdot o=b/d$. Then for $\psi=1_{\nball}$
\[  \pi_\sigma (x)
    \psi (z) =\overline{d}^{-\sigma} K_{\sigma -n -1}(z,w)
\quad \text{    and }\quad 
W^\sigma\psi (f)(x)=d^{-\sigma}f(w)\, .\]
  \end{lemma}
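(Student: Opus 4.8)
The plan is to compute $\pi_\sigma(x)\psi$ directly from the definition~\eqref{de:rep} and recognize the result as a multiple of the reproducing kernel, and then to deduce the wavelet coefficient formula by pairing against $f\in\cV_\sigma$ and invoking the reproducing property from Lemma~\ref{le:repK}. First I would plug $\psi=1_{\nball}$ into~\eqref{de:rep}: since $\psi\equiv 1$ on $\nball$ and the fractional transformation $z\mapsto (a^*z-\overline c)/(-\dup zb+\overline d)$ maps $\nball$ into $\nball$, the composition $\psi\bigl((a^*z-\overline c)/(-\dup zb+\overline d)\bigr)$ is identically $1$. Hence
\[
\pi_\sigma(x)\psi(z)=\frac{1}{(-\dup zb+\overline d)^\sigma}.
\]
The task is then to identify this with $\overline d^{-\sigma}K_{\sigma-n-1}(z,w)$ where $w=b/d$. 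Factoring $\overline d$ out of the denominator gives $(-\dup zb+\overline d)^\sigma=\overline d^{\,\sigma}\bigl(1-\dup{z}{b/\overline{\overline d}}\bigr)^\sigma$; more carefully, $-\dup zb+\overline d=\overline d\,(1-\dup{z}{b}/\overline d)$, and since $\dup{z}{b}/\overline d=\dup{z}{b/d}=\dup{z}{w}$ (using that $\overline{\dup zw}$-type identities let us write $\dup{z}{b}/\overline d=\dup{z}{b/d}$), we obtain $(-\dup zb+\overline d)^\sigma=\overline d^{\,\sigma}(1-\dup zw)^\sigma$. Recalling from~\eqref{eq:rk} that $K_\alpha(z,w)=(1-\dup zw)^{-(n+1+\alpha)}$ with $\alpha=\sigma-n-1$, so that $n+1+\alpha=\sigma$, this is exactly $\pi_\sigma(x)\psi(z)=\overline d^{-\sigma}K_{\sigma-n-1}(z,w)$, proving the first identity. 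A minor point to check carefully is the branch of the power when $\sigma$ is not an integer, but since we work on $S$ (or a covering) the cocycle $x\mapsto(-\dup zb+\overline d)^\sigma$ is a genuine well-defined function and the factorization respects the chosen branch; one should note $d\neq 0$ because $x\in\sun$ maps $o$ into $\nball$, so $|b/d|<1$.

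For the second identity, I would write out the inner product in $\cV_\sigma=\cA^2_{\sigma-n-1}$:
\[
W^\sigma\psi(f)(x)=\ip{f}{\pi_\sigma(x)\psi}_{\sigma-n-1}
=\int_{\nball}f(z)\,\overline{\pi_\sigma(x)\psi(z)}\,dv_{\sigma-n-1}(z)
=\overline{\overline d^{-\sigma}}\int_{\nball}f(z)\,\overline{K_{\sigma-n-1}(z,w)}\,dv_{\sigma-n-1}(z).
\]
Using $\overline{K_\alpha(z,w)}=K_\alpha(w,z)$ (noted right after Lemma~\ref{le:repK}) and the reproducing property $f(w)=\int_{\nball}f(z)K_\alpha(w,z)\,dv_\alpha(z)$, the integral collapses to $f(w)$, while $\overline{\overline d^{-\sigma}}=d^{-\sigma}$. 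This yields $W^\sigma\psi(f)(x)=d^{-\sigma}f(w)$, as claimed.

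I do not anticipate a serious obstacle here; the lemma is essentially a direct computation. The only places demanding a little care are: (i) correctly tracking the branch of $(-\dup zb+\overline d)^\sigma$ and justifying the factorization $\overline d^{\,\sigma}(1-\dup zw)^\sigma$ on the relevant covering group, and (ii) the bookkeeping with complex conjugates and the identity $\dup zb/\overline d=\dup z{b/d}=\dup zw$, which requires recognizing that $w=x\cdot o=b/d$ follows from the fractional action $x\cdot z=(az+b)/(\dup c{\overline z}+d)$ evaluated at $z=o$. Once these are in place the reproducing kernel identity~\eqref{eq:rk} and the reproducing property from Lemma~\ref{le:repK} do all the work.
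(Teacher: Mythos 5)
Your proposal is correct and follows essentially the same route as the paper: plug $\psi=1_{\nball}$ into \eqref{de:rep}, factor $-\dup{z}{b}+\overline{d}=\overline{d}\,(1-\dup{z}{b/d})$ to recognize $\overline{d}^{-\sigma}K_{\sigma-n-1}(z,w)$, and then obtain the second identity from the reproducing property of $K_{\sigma-n-1}$. The extra care you take with the branch of the power and with the conjugate bookkeeping is sound but not needed beyond what the paper's one-line computation already records.
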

  
  \begin{proof} The first statements follows from \eqref{eq:rk} and (\ref{de:rep})  using
\[ \pi_\sigma (x) 
    \psi (z) = \frac{1}{(-\dup{z}{b} +\overline{d})^\sigma}
    =\overline{d}^{-\sigma} \frac{1}{(1-\dup{z}{b/d})^\sigma}
    =\overline{d}^{-\sigma} K_{\sigma - n -1} (z,w)\, .\]
The second statement follows then immediately.
\end{proof}

\begin{corollary} \label{rkrep}
  Let $\psi = 1_{\nball }$, $x\in SU(n,1)$,  $w=x \cdot o$, and $f \in
  \mathcal{V}_\sigma $.  Then 
  \begin{equation}
  \label{absvalmatrixcoeff}
  |W_\psi^\sigma(f)(x)| = (1-|w|^2)^{\sigma/2} |f(w)|.
\end{equation}
In particular, for $f=\psi$, we have
\begin{equation}
  \label{eq:c3}
    |W_\psi^\sigma(\psi )(x)| = (1-|x\cdot o|^2)^{\sigma/2} =
    (1-|w|^2)^{\sigma/2} \, .
\end{equation}
\end{corollary}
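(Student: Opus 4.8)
The plan is to derive Corollary~\ref{rkrep} as an essentially immediate consequence of Lemma~\ref{le:1andK}. From that lemma we already know $W_\psi^\sigma(f)(x) = d^{-\sigma} f(w)$ where $w = x\cdot o = b/d$, so taking absolute values gives $|W_\psi^\sigma(f)(x)| = |d|^{-\sigma}\,|f(w)|$. The only real content is therefore to identify $|d|^{-\sigma}$ with $(1-|w|^2)^{\sigma/2}$, i.e.\ to show that $|d|^{-2} = 1-|b/d|^2$, equivalently $|d|^2 - |b|^2 = 1$, for every $x = \left(\begin{smallmatrix} a & b \\ c^t & d\end{smallmatrix}\right) \in \sun$.

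First I would extract the relation $|d|^2 - |b|^2 = 1$ from the defining identity $x^* J_{n,1} x = J_{n,1}$ (equivalently from $x J_{n,1} x^* = J_{n,1}$, using $x^{-1} = J_{n,1}x^*J_{n,1}$ as in \eqref{eq:inverse}). Writing out the bottom-right entry of $x J_{n,1} x^*$ in block form, the last row of $x$ is $(c^t, d)$, and pairing it against $J_{n,1}$ times the conjugate of the last row yields $-\dup{c}{c} + |d|^2 = \ldots$; one must be slightly careful, because it is actually the condition coming from the $(n+1,n+1)$ entry of $x J_{n,1} x^*$ involving $b$ and $d$ that gives $|d|^2 - |b|^2 = 1$. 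Concretely, from $x^{-1} = \left(\begin{smallmatrix} a^* & -\overline{c} \\ -\overline{b}^t & \overline{d}\end{smallmatrix}\right)$ and $x x^{-1} = I$, reading off the $(n+1,n+1)$ block gives $-\dup{b}{b} + |d|^2 = 1$ (after matching conventions for $\dup{\cdot}{\cdot}$), which is exactly $|d|^2 - |b|^2 = 1$. Hence $1 - |w|^2 = 1 - |b|^2/|d|^2 = (|d|^2 - |b|^2)/|d|^2 = 1/|d|^2$, so $|d|^{-\sigma} = (1-|w|^2)^{\sigma/2}$, proving \eqref{absvalmatrixcoeff}.

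For the special case $f = \psi = 1_{\nball}$ we simply note $f(w) = 1$ for $w \in \nball$, so \eqref{absvalmatrixcoeff} collapses to $|W_\psi^\sigma(\psi)(x)| = (1-|w|^2)^{\sigma/2} = (1-|x\cdot o|^2)^{\sigma/2}$, which is \eqref{eq:c3}. Alternatively one can read \eqref{eq:c3} directly off the first formula in Lemma~\ref{le:1andK} by setting $z = w$ in $\pi_\sigma(x)\psi(z) = \overline{d}^{-\sigma} K_{\sigma-n-1}(z,w)$, since $K_{\sigma-n-1}(w,w) = (1-|w|^2)^{-(\sigma-n-1)} \cdots$ — but the cleanest route is just to specialize \eqref{absvalmatrixcoeff}.

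The main (indeed only) obstacle is bookkeeping: getting the block-matrix identity to yield precisely $|d|^2 - |b|^2 = 1$ rather than some cousin of it, and making sure the sesquilinear-form and fractional-transformation conventions used in \eqref{de:rep}, \eqref{eq:inverse}, and the $S$-action $w = b/d$ (the value of $x\cdot o$ with $z = o$) are all consistent. Since $\sigma$ may be non-integral, one should also remark that $|d|^{-\sigma}$ is unambiguous even when $d^{-\sigma}$ is only defined up to the relevant cocycle, which is consistent with the earlier observation that $|W_g^\sigma(f)|$ descends to a well-defined function on $\sun$. No deep input is needed beyond Lemma~\ref{le:1andK} and the definition of $\sun$.
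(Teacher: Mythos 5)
Your proof is correct and follows the paper's own route exactly: apply Lemma~\ref{le:1andK} to get $|W_\psi^\sigma(f)(x)|=|d|^{-\sigma}|f(w)|$, then deduce $|d|^2-|b|^2=1$ from the explicit formula \eqref{eq:inverse} for $x^{-1}$, so that $|d|^{-\sigma}=(1-|b/d|^2)^{\sigma/2}=(1-|w|^2)^{\sigma/2}$. One tiny bookkeeping correction: it is the $(n+1,n+1)$ entry of $x^{-1}x=I$ (equivalently of $x^*J_{n,1}x=J_{n,1}$) that yields $|d|^2-|b|^2=1$, whereas the products $xx^{-1}$ and $xJ_{n,1}x^*$ give $|d|^2-|c|^2=1$ instead --- both identities are true, but you attributed the needed one to the wrong product.
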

\begin{proof}
The explicit description of $x^{-1}$ in (\ref{eq:inverse}) shows that
$|d|^2-|b|^2=1$, and therefore
\begin{equation*}
|d|^{-\sigma}=(1-|b/d|^2)^{\sigma/2}=(1-|w|^2)^{\sigma /2}\, .  \qedhere
\end{equation*}
\end{proof}

\begin{proposition} \label{resttos}
 Let $\sigma>n$ and $\psi=1_{\nball}$. Then the following holds true:
\begin{enumerate}
\item  $\psi$ is $S$-cyclic, i.e., the subspace generated by
  the vectors  in  $\{\pi_\sigma (x)\psi : x\in S\}$ is dense in $\mathcal{V}_\sigma$.
\label{resttos-1}

\item $W_\psi^\sigma (f) \in L^2(S)$ for all $f\in \mathcal{V}_\sigma$.
\label{resttos-2}
\item  The space $\{f\in \mathcal{V}_\sigma\mid W_\psi^\sigma (f)\in L^1(S)\}$ is non-zero if and only
if $\sigma >2n$.
\label{resttos-3}
\end{enumerate}
\end{proposition}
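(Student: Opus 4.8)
The plan is to transfer everything from the group $\sun$ (or $G_\sigma$) to the subgroup $S$ via the key identity $\int_G \widetilde f(x)\,dx = \int_S \widetilde f(x)\,dx$ from \eqref{eq:int}, together with Corollary~\ref{rkrep}, which expresses $|W_\psi^\sigma(f)(x)|$ purely in terms of the point $w = x\cdot o \in \nball$ and the holomorphic function $f$. Since $S$ acts simply transitively on $\nball$ (Theorem~\ref{th:2.3}\eqref{th:2.3-3}), the map $h\mapsto h\cdot o$ is a diffeomorphism $S\simeq\nball$ carrying Haar measure on $S$ to the invariant measure $dv(z)/(1-|z|^2)^{n+1}$ on $\nball$; this is exactly the content of \eqref{eq:int} restricted to $S$.

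For \eqref{resttos-1}: the stabilizer of $o$ in $G$ is $K$, so $\{\pi_\sigma(x)\psi : x\in S\}$ and $\{\pi_\sigma(x)\psi : x\in G\}$ span the same subspace up to the action of $\pi_\sigma(K)$ on $\psi$; but $\psi = 1_\nball$ is (a scalar multiple of) a lowest weight vector fixed by $K_\sigma$ up to the character, so $\pi_\sigma(k)\psi \in \C\psi$ and the two spanning sets agree. Since $\pi_\sigma$ is irreducible on $\mathcal{V}_\sigma$ and $\psi\neq 0$, the $G_\sigma$-span of $\psi$ is dense, hence so is the $S$-span. (Alternatively: by Lemma~\ref{le:1andK} the vectors $\pi_\sigma(x)\psi$ for $x\in S$ are, up to nonzero scalars, exactly the reproducing kernels $K_{\sigma-n-1}(\cdot,w)$ for $w\in\nball$, and these span a dense subspace of $\cA^2_{\sigma-n-1} = \mathcal{V}_\sigma$ by Lemma~\ref{le:repK}.) For \eqref{resttos-2}: by Corollary~\ref{rkrep} and the Lemma relating $L^p_\alpha(\nball)$-norms to integrals over $G$ (equivalently $S$), with $\alpha = \sigma - n - 1 > -1$,
\[
\int_S |W_\psi^\sigma(f)(x)|^2\,dx = \int_\nball (1-|w|^2)^{\sigma}|f(w)|^2\,\frac{dv(w)}{(1-|w|^2)^{n+1}} = \int_\nball |f(w)|^2 (1-|w|^2)^{\sigma-n-1}\,dv(w),
\]
which is (a constant times) $\|f\|_{\alpha}^2 < \infty$ since $f\in\mathcal{V}_\sigma = \cA^2_{\sigma-n-1}$.

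For \eqref{resttos-3}, the same computation gives $\|W_\psi^\sigma(f)\|_{L^1(S)} = c\int_\nball |f(w)|(1-|w|^2)^{\sigma/2-n-1}\,dv(w)$, so the question is whether there is a nonzero holomorphic $f\in\cA^2_{\sigma-n-1}$ that also lies in $\cA^1_{\sigma/2-n-1}$, i.e.\ whether $\cA^2_{\sigma-n-1}\cap\cA^1_{\sigma/2-n-1}\neq\{0\}$. For the ``if'' direction with $\sigma > 2n$, the weight exponent $\beta := \sigma/2 - n - 1 > -1$, so $\cA^1_\beta$ is a genuine weighted Bergman space containing all polynomials; in particular $f = 1 = \psi$ works (it lies in every $\cA^p_\gamma$ with $\gamma > -1$), giving a nonzero element. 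For the ``only if'' direction, suppose $\sigma \le 2n$, so $\beta = \sigma/2 - n - 1 \le -1$ and the integral $\int_\nball (1-|w|^2)^{\sigma/2 - n - 1}\,dv(w)$ diverges. One must show no nonzero $f\in\cA^2_{\sigma-n-1}$ can have $\int_\nball |f(w)|(1-|w|^2)^{\sigma/2-n-1}\,dv(w) < \infty$: the point is that a nonzero holomorphic function on $\nball$ cannot vanish on a set that compensates for the non-integrable singularity of the weight near the boundary sphere. I expect this to be the main obstacle. The cleanest route is a subharmonicity/mean-value argument: $|f|$ is subharmonic (indeed plurisubharmonic, hence subharmonic in the real $2n$-dimensional sense), so $|f(w)|$ is bounded below near the boundary by averages of $|f|$ over small balls; combined with $\int_\nball|f(w)|^2(1-|w|^2)^{\sigma-n-1}dv(w)<\infty$ and a covering of an annulus $\{1-\delta < |w| < 1\}$ by Bergman-metric balls, one derives that $\int |f|(1-|w|^2)^{\sigma/2-n-1}dv$ forces a lower bound of the form $c\int_{1-\delta}^1 (1-r^2)^{\sigma/2-n-1}\,r^{2n-1}\,dr$ unless $f\equiv 0$ — this diverges when $\sigma \le 2n$, a contradiction. (One can instead invoke the known fact, e.g.\ from Zhu's book, that for $\alpha \le -1$ the only holomorphic function in $L^1((1-|z|^2)^\alpha dv)$ is $0$, applied after noting the weight $\sigma/2-n-1 \le -1$.) Assembling the three parts then completes the proof.
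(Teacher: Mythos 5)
Your proposal is correct and follows essentially the same route as the paper: part (i) via the pointwise formula $|W_\psi^\sigma(f)(x)|=(1-|x\cdot o|^2)^{\sigma/2}|f(x\cdot o)|$ together with transitivity of $S$ on $\nball$, and parts (ii) and (iii) via the explicit transfer of the integral over $S$ to the weighted integral over $\nball$, which is exactly the ``explicit computation for Corollary~\ref{rkrep}'' that the paper invokes. The only difference is that for the ``only if'' direction of (iii) the paper simply cites the integrability criterion of Theorem~\ref{th:2.7}, whereas you supply a self-contained argument (nondecreasing spherical means of $|f|$, equivalently the fact that $\mathcal{A}^1_\beta=\{0\}$ for $\beta\le -1$), which is a correct and slightly more elementary substitute.
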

\begin{proof} (\ref{resttos-2}) and 
  (\ref{resttos-3}) follow from Theorem \ref{th:2.7} and Theorem
  \ref{th:2.3}(\ref{th:2.3-3}), and  also 
from the explicit  computation for Corollary ~\ref{rkrep}.

To show that $\psi$ is $S$-cyclic in  $V_\sigma$,  it is enough
to show that $W_\psi^\sigma(f)(x)=0$ for all $x\in S$ implies $f=0$.  This immediately follows from
(\ref{absvalmatrixcoeff}) and the fact that $S$ acts transitively on $\nball$.
 \end{proof}

\subsection{Smooth vectors}\label{se:Smooth}
The Fr\'echet space of smooth vectors in $\cV_\sigma$ will play an important role
in the following. We therefore recall the basic definitions and facts here. 
Let $G$ be a Lie group with Lie algebra $\fg$ and let $\pi$
be a representation of   $G$ on a Hilbert space 
$\cH$.  A vector $v\in \cH$ is said to be
\textit{smooth} if the mapping 
$x\mapsto \pi(x)v$, is smooth.
The space of smooth vectors is denoted by $\cH^\infty$. It is dense in $\cH$.
For $X\in\fg$ we define a linear map $\pi^\infty (X) : \cH^\infty \to \cH^\infty $ by
 $$
\pi^\infty(X)v = \lim_{t\to 0} \frac{\pi(\exp(tX))v-v}{t},
$$
where the limit is taken in  $\cH$.  We have
\[\pi (\exp tX)\pi (x)v=\pi(x)\pi (\exp t\Ad (x^{-1})X)\, .\]
It follows therefore, by composition of smooth maps, that $\cH^\infty$ is $G$-invariant. Similarly it follows
that $\cH^\infty $ is $\fg$-invariant. Finally, $\pi^\infty$ is a representation of $\fg$ in $\cH^\infty$ such that
for all $X\in \fg$ and $x\in G$ we have
\[\pi (x) \pi^\infty (X) =\pi^\infty (\Ad (x)X)\pi (x) \, .\]
Recall that for $\mathrm{SU}(n,1)$ the map $\Ad$ is just given by $\Ad (x)X=xXx^{-1}$.

Fix a basis $X_1,\ldots , X_k$ of $\fg$. For $\alpha \in\Z_+^k$ define
\[\pi^\infty (X)^\alpha :=\pi (X_1)^{\alpha_1}\ldots \pi (X_k)^{\alpha_k}\, .\]
We can then define a family of seminorms $p_\alpha$ on $\cH^\infty$ by
\begin{equation}\label{eq:pAlpha}
p_\alpha (v)=\|\pi (X)^\alpha v\|\, .
\end{equation}
This family of seminorms makes $\cH^\infty $ into a Fr\'echet space such that the corresponding representation
of $G$ is continuous, in fact smooth. We denote the space of continuous conjugate
linear functionals on $\cH^\infty$ by
$\cH^{-\infty}$.

The space of smooth vectors in $\cV_\sigma$ can be
described explicitly
as a space of holomorphic functions on $\nball$
with certain decay properties,   which we now describe. This was first done for $\mathrm{SU}(1,1)$ in
\cite{OO88} and then in general in  \cite[p. 551 and p. 556]{Chebli2004}. 
Denote by $\mathcal{P}_k$  the space
of homogeneous polynomials of degree $k$. 
For $\varphi \in \mathcal{V}_\sigma$ write
\[\varphi =\sum_\gamma  (\varphi , \varphi_\gamma ) \varphi_\gamma =
\sum_{k=0}^\infty \sum_{|\gamma  |=k}\ip{\varphi}{\varphi_\gamma }\varphi_\gamma=\sum_{k=0}^\infty \varphi_k\]
where the convergence is in the norm. Thus every element in $\cV_\sigma$ can be written as
a sum of homogeneous polynomials.
\begin{lemma} We have

\begin{equation}\label{eq:VsI}
  \mathcal{V}_\sigma^\infty = \{ 
  \varphi =\sum_k \varphi_k\mid  \varphi_k\in \mathcal{P}_k,
  \text{ and } \forall N,\exists C, \| \varphi_k\|_{\mathcal{V}_\sigma} \leq C (1+k)^{-N}
  \}\, .\end{equation}
and
\begin{equation}\label{eq:VsI1}
  \mathcal{V}_\sigma^{-\infty} = \{ 
  \varphi =\sum_k \varphi_k\mid  \varphi_k\in \mathcal{P}_k,
  \text{ and }  N, C, \| \varphi_k\|_{\mathcal{V}_\sigma} \leq C (1+k)^{N}
  \}\, .\end{equation}
\end{lemma}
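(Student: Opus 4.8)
The plan is to compute the action of the Lie algebra $\mathfrak{g} = \mathfrak{su}(n,1)$ on the homogeneous components $\varphi_k$ and to show that, modulo bounded operators, the key differential operators either preserve the grading by degree or shift it by one, acting on $\mathcal{P}_k$ essentially like multiplication by a quantity comparable to $k$. Concretely, I would first decompose $\mathfrak{g} = \mathfrak{k} \oplus \mathfrak{p}$ with respect to the maximal compact $K \simeq U(n)$, and note that $\pi_\sigma^\infty(X)$ for $X \in \mathfrak{k}$ acts on each $\mathcal{P}_k$ separately (because $K$ fixes the origin and $\pi_\sigma|_K$ acts on $\mathcal{P}_k$ by a finite-dimensional unitary representation), hence is bounded by a constant (independent of $k$, since the operator norm of the $U(n)$-action on $\mathcal{P}_k$ in the $\mathcal{V}_\sigma$-norm is $1$). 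The decisive part is the noncompact directions $X \in \mathfrak{p}$: here $\mathfrak{p}^+$ and $\mathfrak{p}^-$ act as raising and lowering operators, $\pi_\sigma^\infty(\mathfrak{p}^+) : \mathcal{P}_k \to \mathcal{P}_{k+1}$ and $\pi_\sigma^\infty(\mathfrak{p}^-) : \mathcal{P}_k \to \mathcal{P}_{k-1}$. From the explicit formula~\eqref{de:rep} one differentiates to find that on $z^\gamma$ with $|\gamma| = k$ the raising operators act, up to bounded factors, like multiplication by monomials of degree one with coefficient involving $(\sigma + k)$, and the lowering operators like a constant-coefficient first-order differential operator whose norm on $\mathcal{P}_k$ grows like $k$. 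Using the orthogonality~\eqref{eq:nonZgamma} of distinct monomials one estimates $\|\pi_\sigma^\infty(X)\varphi_k\|_{\mathcal{V}_\sigma} \asymp (1+k)\,\|\varphi_k\|_{\mathcal{V}_\sigma}$ for a suitable $X$ spanning the relevant direction, and $\lesssim (1+k)\,\|\varphi_k\|_{\mathcal{V}_\sigma}$ for all $X \in \mathfrak{g}$.

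Granting this, the inclusion ``$\supseteq$'' in~\eqref{eq:VsI} is easy: if $\varphi = \sum_k \varphi_k$ with $\|\varphi_k\|_{\mathcal{V}_\sigma} \le C_N (1+k)^{-N}$ for every $N$, then for any multiindex $\alpha \in \mathbb{Z}_+^k$ the operator $\pi_\sigma^\infty(X)^\alpha$ maps $\varphi_k$ into $\bigoplus_{|j-k|\le|\alpha|}\mathcal{P}_j$ with norm $O((1+k)^{|\alpha|})$, so $\sum_k \pi_\sigma^\infty(X)^\alpha \varphi_k$ converges in $\mathcal{V}_\sigma$ (the tail is dominated by $\sum_k (1+k)^{|\alpha|-N}$), showing $\varphi \in \mathcal{V}_\sigma^\infty$ and that $p_\alpha(\varphi) < \infty$. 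For the reverse inclusion ``$\subseteq$'', suppose $\varphi \in \mathcal{V}_\sigma^\infty$. Pick an element $X_0 \in \mathfrak{p}$ (say the generator of $A$, i.e.\ $\dot a_t$ at $t=0$) whose action lower-bounds the degree: the point is that $\pi_\sigma^\infty(X_0)$ has, on the orthogonal complement of the constants, a ``diagonal-plus-off-diagonal'' form in which the leading behavior on $\mathcal{P}_k$ is multiplication by something comparable to $\pm(\sigma+2k)$ times a shift, so that an elementary induction (or a Neumann-series / perturbation argument comparing $\pi_\sigma^\infty(X_0)$ with the pure grading operator $N\varphi = \sum_k k\,\varphi_k$) yields $\|\varphi_k\|_{\mathcal{V}_\sigma} \lesssim (1+k)^{-m}\,\big\|(\pi_\sigma^\infty(X_0))^m \varphi\big\|_{\mathcal{V}_\sigma} = (1+k)^{-m}\,p_{\alpha_m}(\varphi)$ for the multiindex $\alpha_m$ corresponding to $X_0^m$. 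Since $p_{\alpha_m}(\varphi) < \infty$ for every $m$, this gives the required rapid decay. The argument for~\eqref{eq:VsI1}, the distribution vectors, is the dual statement: $\varphi \in \mathcal{V}_\sigma^{-\infty}$ means $|(\varphi,\chi)| \le C\,p_\alpha(\chi)$ for some fixed $\alpha$ and all smooth $\chi$; testing against $\chi = \varphi_\gamma$ and using the norm estimate $p_\alpha(\varphi_\gamma) \lesssim (1+|\gamma|)^{|\alpha|}$ gives $\|\varphi_k\|_{\mathcal{V}_\sigma} \lesssim (1+k)^{|\alpha|}$, and conversely any $\varphi$ with polynomial growth of its components defines a continuous conjugate-linear functional on $\mathcal{V}_\sigma^\infty$ by $(\varphi,\chi) = \sum_k(\varphi_k,\chi_k)$, the sum converging because of the matching rapid decay of $\chi_k$.

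The main obstacle will be the precise bookkeeping in establishing the two-sided estimate $\|\pi_\sigma^\infty(X_0)^m \varphi_k\|_{\mathcal{V}_\sigma} \asymp (1+k)^m\|\varphi_k\|_{\mathcal{V}_\sigma}$ — in particular the lower bound, which is what drives the ``$\subseteq$'' direction and is not automatic from boundedness of the individual operators. One has to control the off-diagonal (degree-shifting) contributions so they do not cancel the diagonal growth; the cleanest way is probably to diagonalize the action of the $\mathfrak{sl}_2$-triple generated inside $\mathfrak{g}$ by the noncompact direction (the $K$-types $\mathcal{P}_k$ are strung together into a single lowest-weight $\mathfrak{sl}_2$-module on which the relevant operators act by the classical explicit formulas), where the growth in $k$ becomes manifest. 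I would cite~\cite{Chebli2004} (and~\cite{OO88} for $n=1$) for the computation of the smooth-vector space, and present the $\mathfrak{sl}_2$-reduction as the mechanism making the weights/seminorms transparent. A secondary nuisance is the usual one of interchanging the infinite sum $\sum_k \varphi_k$ with the (unbounded, but defined on a common core of finite sums of homogeneous polynomials) operators $\pi_\sigma^\infty(X)^\alpha$; this is handled by first working on the dense set of polynomials, where everything is a finite sum, and then passing to the limit using the just-established operator-norm bounds on each $\mathcal{P}_k$.
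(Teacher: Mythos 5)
The paper does not actually prove this lemma: it is imported from Ch\'ebli--Faraut \cite{Chebli2004} (and \cite{OO88} for $n=1$), so there is no in-text argument to measure yours against. Your outline is the standard $K$-type decay characterization and its overall architecture is sound, but it contains one concrete error and leaves the decisive step unestablished. The error: you claim that $\pi_\sigma^\infty(X)$ for $X\in\mathfrak k$ acts on each $\mathcal P_k$ with norm bounded by a constant independent of $k$, "since the operator norm of the $U(n)$-action \ldots is $1$." Unitarity of the group action says nothing about its derivative: the differentiated action of $\mathfrak u(n)$ on $\mathcal P_k$ is skew-Hermitian but has norm growing linearly in $k$. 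Concretely, for $u_{e^{i\theta}I_n}\in K$ formula \eqref{de:rep} gives $\pi_\sigma(u_{e^{i\theta}I_n})f=e^{-i(n\sigma+(n+1)k)\theta}f$ on $\mathcal P_k$, so the corresponding $Z\in\mathfrak k$ acts by the scalar $-i\bigl(n\sigma+(n+1)k\bigr)$, which is unbounded in $k$. This does not hurt your ``$\supseteq$'' direction (there you only need each basis element to map $\mathcal P_k$ into nearby degrees with norm $O(1+k)$, which remains true), but it blinds you to the easy route for the other direction.

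That other direction --- the lower bound $\|\varphi_k\|_{\mathcal V_\sigma}\lesssim (1+k)^{-m}\max_{|\alpha|\le m}p_\alpha(\varphi)$, which drives both ``$\subseteq$'' in \eqref{eq:VsI} and your duality argument for \eqref{eq:VsI1} --- is exactly where your plan is weakest. You propose to extract the growth from a noncompact $X_0\in\mathfrak p$, which shifts the degree by $\pm1$, and you yourself concede that the off-diagonal contributions might cancel the diagonal growth; the promised ``$\mathfrak{sl}_2$-reduction / Neumann series'' is a plan, not an argument, and as written the step would not survive scrutiny. The repair is the element $Z$ above (equivalently, the Casimir of $\mathfrak k$): it acts \emph{diagonally} in the grading, on $\mathcal P_k$ by a scalar of modulus comparable to $1+k$, with no degree-shifting terms at all. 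Since the $\mathcal P_k$ are mutually orthogonal in $\mathcal V_\sigma$ and the orthogonal projection $P_k$ commutes with $\pi_\sigma^\infty(Z)$, one gets
\begin{equation*}
\bigl(n\sigma+(n+1)k\bigr)^m\,\|\varphi_k\|_{\mathcal V_\sigma}
=\bigl\|P_k\,\pi_\sigma^\infty(Z)^m\varphi\bigr\|_{\mathcal V_\sigma}
\le \bigl\|\pi_\sigma^\infty(Z)^m\varphi\bigr\|_{\mathcal V_\sigma}
\lesssim \max_{|\alpha|\le m}p_\alpha(\varphi),
\end{equation*}
which is the entire hard direction in one line. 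With that substitution (and the routine justification that membership in the domains of all the iterated, closed Lie-algebra operators on the core of polynomials yields smoothness), your proof closes; without it, the key estimate is not proved.
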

Note that the dual pairing can be described as
\begin{equation*}
  \dup{f}{\varphi}_{\mathcal{V}_\sigma}
  = \sum_k (f_k,\varphi_k)
  = \sum_k \int_{\nball} f_k(z)\overline{\varphi_k(z)} 
  (1-|z|^2)^\sigma \frac{dv(z)}{(1-|z|^2)^{n+1}}.
\end{equation*}
We summarize the main properties of $\mathcal{V}_\sigma ^\infty $
needed later. 

\begin{lemma} 
For $\sigma>n$ we have:

\label{bd} 
\begin{enumerate}
\item  Every polynomial is a  smooth vector for $\pi _\sigma $.
\label{bd-1}
\item  $\mathrm{dim} \, (\mathcal{P}_k ) = \binom{k+n-1}{k} \leq (1+k)^n$. 
\label{bd-2}
\item Every smooth vector is bounded on $\nball $. 
\label{bd-3}
\end{enumerate}
  \end{lemma}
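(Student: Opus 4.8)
The plan is to prove the three assertions of Lemma~\ref{bd} in order, each being essentially a bookkeeping exercise built on the description \eqref{eq:VsI} of $\mathcal{V}_\sigma^\infty$ and the norm formula \eqref{eq:nonZgamma}.

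For part~\eqref{bd-1}: a polynomial $\varphi$ is a \emph{finite} sum $\sum_{k=0}^{M}\varphi_k$ of homogeneous pieces, so trivially for every $N$ there is a constant $C$ (depending on $N$ and on $\varphi$) with $\|\varphi_k\|_{\mathcal{V}_\sigma}\le C(1+k)^{-N}$ --- indeed the left side vanishes for $k>M$. Hence $\varphi\in\mathcal{V}_\sigma^\infty$ by \eqref{eq:VsI}.

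For part~\eqref{bd-2}: the dimension of the space of homogeneous polynomials of degree $k$ in $n$ variables is the number of multiindices $\gamma\in\Z_+^n$ with $|\gamma|=k$, which is the standard stars-and-bars count $\binom{k+n-1}{k}=\binom{k+n-1}{n-1}$. The bound $\binom{k+n-1}{n-1}\le (1+k)^n$ follows since $\binom{k+n-1}{n-1}=\frac{(k+1)(k+2)\cdots(k+n-1)}{(n-1)!}\le (k+n-1)^{n-1}\le (1+k)^{n-1}\cdot n^{n-1}$ --- one must only be slightly careful with the constant, but the crude estimate $\binom{k+n-1}{n-1}\le (k+1)^{n-1}\le (1+k)^n$ (each of the $n-1$ factors in the numerator is at most $(k+n-1)$, and dividing by $(n-1)!\ge 1$ while noting $k+n-1\le (k+1)(n-1)$ for $k\ge 0$ when $n\ge 2$, with the case $n=1$ being trivial) suffices.

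For part~\eqref{bd-3}, which I expect to be the only step requiring real work: let $\varphi=\sum_k\varphi_k\in\mathcal{V}_\sigma^\infty$. For $z\in\nball$ write $\varphi_k(z)=\sum_{|\gamma|=k}\ip{\varphi}{\varphi_\gamma}_\sigma\,\varphi_\gamma(z)$; by Cauchy--Schwarz in the (finite-dimensional) space $\mathcal{P}_k$ with the orthonormal basis $\{\varphi_\gamma\}_{|\gamma|=k}$, we have $|\varphi_k(z)|\le \|\varphi_k\|_{\mathcal{V}_\sigma}\,\bigl(\sum_{|\gamma|=k}|\varphi_\gamma(z)|^2\bigr)^{1/2}$. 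The second factor is $K_{\sigma-n-1}(z,z)^{1/2}$ restricted to the homogeneous piece, but more simply it equals $\bigl(\sum_{|\gamma|=k}\frac{\Gamma(\sigma+|\gamma|)}{\gamma!\,\Gamma(\sigma)}|z^\gamma|^2\bigr)^{1/2}$, and since $\Gamma(\sigma+k)/\Gamma(\sigma)\le C_\sigma (1+k)^{\sigma}$ (polynomial growth in $k$) while $\sum_{|\gamma|=k}\frac{k!}{\gamma!}|z^\gamma|^2=|z|^{2k}\le 1$ on $\nball$, one gets $\sum_{|\gamma|=k}\frac{1}{\gamma!}|z^\gamma|^2\le \sum_{|\gamma|=k}\frac{k!}{\gamma!}|z^\gamma|^2\le 1$ (using $\gamma!\le k!$), hence $\bigl(\sum_{|\gamma|=k}|\varphi_\gamma(z)|^2\bigr)^{1/2}\le C_\sigma^{1/2}(1+k)^{\sigma/2}$ uniformly in $z\in\nball$. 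Now choose $N>\sigma/2+1$ in \eqref{eq:VsI}; then
\[
\sup_{z\in\nball}|\varphi(z)|\le \sum_k \sup_{z\in\nball}|\varphi_k(z)|\le C_\sigma^{1/2}\sum_k C(1+k)^{-N}(1+k)^{\sigma/2}<\infty ,
\]
so $\varphi$ is bounded (indeed the series converges uniformly on $\nball$). The main obstacle is simply getting the polynomial-in-$k$ bound on the sup norm of the normalized monomials $\varphi_\gamma$ summed over a degree shell; everything else is elementary, and the key estimates are the multinomial identity $\sum_{|\gamma|=k}\frac{k!}{\gamma!}|z^\gamma|^2=|z|^{2k}$ together with the polynomial growth of the Gamma quotient.
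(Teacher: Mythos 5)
Parts (i) and (ii) are fine (the paper simply declares them clear), and your overall strategy for (iii) --- expand $\varphi_k$ in the orthonormal basis $\{\varphi_\gamma\}_{|\gamma|=k}$ of $\mathcal{V}_\sigma$, apply Cauchy--Schwarz, and bound the diagonal $\sum_{|\gamma|=k}|\varphi_\gamma(z)|^2$ polynomially in $k$ --- is sound and is a genuinely different route from the paper's, which instead passes through $L^2(\mathbb{S}^{2n-1})$, uses the reproducing kernel $H_k$ of $\mathcal{P}_k$ on the sphere to get $|p(z)|\le(1+k)^{n/2}\|p\|_{L^2(\mathbb{S}^{2n-1})}$, and then converts the sphere norm to the $\mathcal{V}_\sigma$ norm via a Gamma quotient. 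Your version is more direct and avoids the spherical-harmonics detour.

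However, there is a concrete error in your key estimate: the claim $\Gamma(\sigma+k)/\Gamma(\sigma)\le C_\sigma(1+k)^{\sigma}$ is false. Since $\Gamma(\sigma+k)/\Gamma(\sigma)=\prod_{j=0}^{k-1}(\sigma+j)$, this quotient grows like $k!\,k^{\sigma-1}$, i.e.\ super-exponentially, not polynomially. The factor that rescues the argument is exactly the $1/k!$ that you threw away when you replaced $\sum_{|\gamma|=k}|z^\gamma|^2/\gamma!$ by $\sum_{|\gamma|=k}k!\,|z^\gamma|^2/\gamma!=|z|^{2k}$. Keep the two together: the multinomial identity gives
\begin{equation*}
\sum_{|\gamma|=k}|\varphi_\gamma(z)|^2=\frac{\Gamma(\sigma+k)}{\Gamma(\sigma)}\sum_{|\gamma|=k}\frac{|z^\gamma|^2}{\gamma!}=\frac{\Gamma(\sigma+k)}{\Gamma(\sigma)\,\Gamma(k+1)}\,|z|^{2k}\le C_\sigma(1+k)^{\sigma-1}
\end{equation*}
for $z\in\nball$, using the standard asymptotic $\Gamma(k+\sigma)/\Gamma(k+1)\sim k^{\sigma-1}$ (it is this ratio, not $\Gamma(\sigma+k)/\Gamma(\sigma)$, that has polynomial growth). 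With this repair the rest of your proof goes through verbatim --- choose $N>(\sigma+1)/2$ and sum --- and you in fact recover the same exponent $\sigma/2$ per shell that the paper obtains.
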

  \begin{proof}  Parts    (\ref{bd-1}) and (\ref{bd-2}) are  clear. Part (\ref{bd-3}) can be proven using results in \cite{Chebli2004}, but since our situation
is simpler we include a self contained proof here. We start by noting that $\cP_k$ is a 
finite dimensional subspace of $L^2(\mathbb{S}^{2n-1})$ and for each $z$ the map
$p\mapsto p(z)$ is linear and well defined and hence continuous. Thus $\cP_k$ is a reproducing kernel Hilbert space.
By Lemma 1.11 in \cite{Zhu2005} a polynomial of homogeneous degree $k$ satisfies
\begin{equation*}
  \| p \|^2_{\mathcal{V}_\sigma} 
  = \frac{\Gamma(n+k)\, \Gamma(\sigma)}{\Gamma(n)\,\Gamma(\sigma+k)}
  \int_{\mathbb{S}^{2n-1}} |p(z)|^2\,d\sigma_n(z) =  \frac{\Gamma(n+k)\, \Gamma(\sigma)}{\Gamma(n)\,\Gamma(\sigma+k)}
  \|p\|^2_{L^2(\S^{2n-1})}
\end{equation*} 
It follows from (\ref{eq:nonZgamma}) that
\[\|z^\gamma \|^2_{L^2(\mathbb{S}^{2n-1})}= \frac{\gamma !\, \Gamma (n)}{\Gamma (n+k  )}\, .\]
Thus  $\{(\Gamma (n+k)/\gamma! \Gamma (n))^{1/2}\, z^\gamma\}_{|\gamma|=k}$ is a orthonormal basis for
$\cP_k$ and the reproducing kernel $H_k$ is given by
\begin{equation}\label{eq:Hk}
H_k(z,w)=\sum_{|\gamma|=k} \frac{\Gamma (n +k)}{\gamma! \Gamma (n)}\, z^\gamma\overline{w}^\gamma\, ,\quad
\|H_k\|^2_{L^2(\mathbb{S}^{2n-1}\times \mathbb{S}^{2n-1})}=\dim \cP_k\le (1+k)^n\,  .
\end{equation}
As $H_k(k\cdot z,  w)= H_k(z,k^{-1}w)$ for all $k\in K$ and $z,w\in\nball$ it follows from the $K$-invariance of the measure $\sigma_n$ 
and (\ref{eq:Hk}) that
\[\int_{\mathbb{S}^{2n-1}}|H_k(z,w)|^2\, d\sigma_n(w)=\int_{\mathbb{S}^{2n-1}\times \mathbb{S}^{2n-1}}|H_k(z,w)|^2\, d\sigma_n\otimes \sigma_n(z,w) \le (1+k)^n\, .\]
Combining all of this, we get for $p\in\cP_k$ and $z\in \nball$:
\[
|p (z)| = \left|\int_{\mathbb{S}^{2n-1}}p(w)H_k(z,w)\, d\sigma_n(w)\right|\le (1+k)^{n/2}\, \|p\|_{L^2(\mathbb{S}^{2n-1})}\, .\]

For $\sigma > n$ we have $\Gamma(n+k)/\Gamma(\sigma+k) \sim k^n/k^\sigma  $
as $k\to \infty$. Hence, there exists constants   $C_1,C>0$ such that for all $k$
$$
\frac{\Gamma(n+k)}{\Gamma(\sigma+k)} 
\geq C_1 k^{n-\sigma}
\geq C(1+k)^{n-\sigma}\, .
$$ 
From this we see that
\[
  \int_{\mathbb{S}^{2n-1}} |p(z)|^2\,d\sigma_n(z) 
  \leq 
  C(1+k)^{\sigma-n}  \| p \|^2_{\mathcal{V}_\sigma} \, .
\]
Thus, if $\varphi =\sum_k \varphi_k$, with $\varphi_k$ homogeneous of degree $k$, it follows from Lemma \ref{eq:VsI} that
$$
|\varphi(z)|
\leq C \sum_k (1+k)^{\sigma/2} \| \varphi_k\|_{\mathcal{V}_\sigma}
\leq C_N \sum_k (1+k)^{\sigma/2} (1+k)^{-N}
$$
which is finite if $N$ is large enough.

  \end{proof}

\section{Bergman spaces, wavelets, frames 
  and atomic decompositions}

\noindent
We now present the general (coorbit) theory which 
allows us to use the discrete series representations to
provide atomic decompositions and frames for the Bergman spaces.

\subsection{Wavelets and coorbit theory}
\label{sec:coorbit}
In this section we describe the elements of coorbit theory with the
goal of applying them to the  representation $\pi _\sigma $ of $S$ or
$G_\sigma $. As $\pi _\sigma $ is highly reducible on $S$, we will use
the general set-up of
\cite{Christensen2009,Christensen2011,Christensen2012} rather than the
original work  in \cite{Feichtinger1989a}.

Let $\fS$ be a Fr\'echet space and let $\fS^\cdual$ be the 
conjugate linear dual equipped with the weak* topology, and
assume that $\fS$ is continuously embedded and weakly dense in $\fS^\cdual$. 
The conjugate dual pairing of elements $\psi\in \fS$ and $f\in \fS^\cdual$ will be denoted
by $\dup{f}{\psi}$. Let $G$ be a locally compact 
group with a fixed left Haar measure
$dx$, and assume that $(\pi,\fS)$ is a continuous representation of
$G$, i.e. $x\mapsto \pi(x)\psi$
is continuous for all $\psi\in \fS$. 
A vector $\psi\in \fS$ is called \emph{weakly cyclic} if 
$\dup{f}{\pi(x)\varphi}=0$ for all $x\in G$ implies that
$f=0$ in $\fS^*$.
The contragradient representation $(\pi^\cdual,\fS^\cdual)$ is 
the continuous representation of $G$ on $\fS^\cdual$ 
defined by
\begin{equation*}
  \dup{\pi^\cdual(g)f}{\varphi}=\dup{f}{\pi(g^{-1})\varphi} \text{\ for $f\in \fS^\cdual.$}
\end{equation*}
For a fixed vector 
$\psi\in \fS$ define the \emph{wavelet transform} $W_\psi:\fS^*\to C(G)$ by
\begin{equation*}
  W_\psi(f)(x) = \dup{f}{\pi(x)\psi} = \dup{\pi^*(x^{-1})f}{\psi}.
\end{equation*}
A cyclic vector $\psi$ 
is called an \emph{analyzing vector} for $\fS$ if for all $f\in \fS^*$ 
the following convolution reproducing formula holds
\begin{equation} \label{rf}
  W_\psi(f)*W_\psi(\psi) = W_\psi(f).
\end{equation}

The main achievement of coorbit theory is the discretization of the
reproducing formula~\eqref{rf} and  the construction of atomic
decompositions and Banach frames for the coorbit spaces~\cite{Feichtinger1988,Feichtinger1989a,Grochenig1991}. We formulate
these results in the set-up of~\cite{Christensen2012}, as it also
covers the case of non-integrable representations. 

The second ingredient of coorbit theory is a class of function spaces
on $G$. In the sequel $Y$ is a Banach function space on $G$ satisfying
the following properties~\cite{Christensen2011}: \\

(R1) $Y$ is a solid Banach function space continuously embedded
in $L^1_{loc}$. $Y$ is invariant under
left and right translation of $G$:
If  $  \ell_x
\widetilde{f}(y) = \widetilde{f}(x^{-1}y)$ denotes the left
translation of a  function  $\widetilde{f}$ on $G$ and $x,y \in G$, 
$\| \ell_x \widetilde{f}\|_Y\leq C_x \| \widetilde{f}\|_Y$ for all
$f\in Y$ for some weight function $C_x$ on $G$. 
Moreover, for each $\widetilde{f}\in Y$, the mapping $x\mapsto \ell_x \widetilde{f}$ is
continuous $G\to Y$. Likewise for the right translation.

(R2) For a fixed analyzing vector $\psi \in \mathcal{S}$ the sequilinear mapping
$(\tilde f, \varphi ) \mapsto \int_G \widetilde{f}(x) \dup{\pi^*(x)\psi}{\varphi}\,
dx \in\mathbb{C}$ is continuous on $Y \times \mathcal{S}$. 

(R3) For a fixed analyzing vector $\psi$, the convolution operators 
$\tilde f \to \tilde f \ast |W_\varphi (\psi)|$ and 
$\tilde f \to \tilde f \ast |W_\psi (\varphi)|$
are bounded on $Y$ for all $\varphi \in \mathcal{S}$. 

In our case, $Y$ will be a weighted $L^p$-space, and the invariance
properties (R1) are obviously satisfied.

For an analyzing vector $\psi$ define the subspace $Y_\psi$ of $Y$ by 
$$Y_\psi = \{ \widetilde{f}\in Y\, |\, \widetilde{f}=\widetilde{f}*W_\psi(\psi) \},$$ 
and define the \emph{coorbit space} with respect to the wavelet $\psi$
to be 
$$\Co_{\fS}^\psi Y = \{ f\in \fS^*\, |\, W_\psi(f)\in Y  \}$$
equipped with the norm $\| f \| = \| W_\psi(f)\|_Y$.

The following result from \cite{Christensen2011} lists conditions which
ensure that $\Co_{\fS}^\psi Y$ is a Banach space that  is isometrically
isomorphic to $Y_\psi$. Even weaker  conditions can be found in
\cite{Christensen2011}. 
\begin{theorem}
  \label{thm:coorbitsduality} 
  
  Assume $\psi$ is
  an analyzing vector for $\fS$ and $Y$ satisfies the axioms (R1) ---
  (R3). 
  Then 
  \begin{enumerate}
  \item $Y_\psi$ is
    a closed reproducing kernel subspace of $Y$ with reproducing kernel
    $K(x,y)=W_\psi(\psi)(x^{-1}y)$.\label{prop1}
  \item The space $\Co_{\fS}^\psi Y$
    is a $\pi^\cdual$-invariant Banach space.     \label{prop2}
  \item $W_\psi:\Co_{\fS}^\psi Y\to Y$ intertwines $\pi^\cdual$ and
    the left
    translation. \label{prop3}
  \item If left translation is continuous on $Y,$ then
    $\pi^*$ acts continuously on $\Co_{\fS}^\psi Y.$\label{prop4}
  \item $\Co_{\fS}^\psi Y = \{ \pi^\cdual(\widetilde{f})\psi \mid \widetilde{f}\in
    Y_\psi\}$. \label{prop5}
  \item $W_\psi:\Co_{\fS}^\psi Y \to Y_\psi$ is an isometric
    isomorphism.\label{prop6}
  \end{enumerate}
\end{theorem}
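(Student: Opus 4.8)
The plan is to assemble the six statements in a logical order, starting from the reproducing formula~\eqref{rf} and the axioms (R1)--(R3), and invoking the general coorbit machinery of~\cite{Christensen2011,Christensen2012}. First I would establish~\ref{prop1}: since $\psi$ is analyzing, $W_\psi(\psi)$ is idempotent under convolution, i.e. $W_\psi(\psi)*W_\psi(\psi)=W_\psi(\psi)$, which makes $\widetilde f\mapsto \widetilde f * W_\psi(\psi)$ a projection. Axiom (R3) guarantees this convolution operator is bounded on $Y$, and (R1) (solidity, continuous embedding into $L^1_{loc}$, continuity of translations) gives that $Y_\psi=\{\widetilde f\in Y : \widetilde f=\widetilde f*W_\psi(\psi)\}$ is closed. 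To see it is a reproducing kernel space, write $\widetilde f(x) = (\widetilde f*W_\psi(\psi))(x) = \int_G \widetilde f(y)\, W_\psi(\psi)(y^{-1}x)\,dy$; the pointwise evaluation is controlled because (R2)/(R3) force $W_\psi(\psi)$ and its left translates to pair continuously against elements of $Y$, so point evaluation is a bounded functional on $Y_\psi$ with kernel $K(x,y)=W_\psi(\psi)(x^{-1}y)=\overline{K(y,x)}$ by the standard symmetry of wavelet coefficients.

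Next I would prove~\ref{prop6}, the isometric isomorphism $W_\psi:\Co_\fS^\psi Y\to Y_\psi$, since most of the remaining assertions follow from it. By definition $\|f\|_{\Co}=\|W_\psi(f)\|_Y$, so $W_\psi$ is isometric onto its image, and the reproducing formula~\eqref{rf} shows $W_\psi(f)=W_\psi(f)*W_\psi(\psi)\in Y_\psi$, so the image lies in $Y_\psi$. For surjectivity: given $\widetilde f\in Y_\psi$, axiom (R2) lets me define $\pi^\cdual(\widetilde f)\psi\in\fS^\cdual$ by $\dup{\pi^\cdual(\widetilde f)\psi}{\varphi}=\int_G \widetilde f(x)\dup{\pi^\cdual(x)\psi}{\varphi}\,dx$, and a computation with the reproducing property of $W_\psi(\psi)$ on $Y_\psi$ shows $W_\psi(\pi^\cdual(\widetilde f)\psi)=\widetilde f$; this simultaneously gives~\ref{prop5}, namely $\Co_\fS^\psi Y=\{\pi^\cdual(\widetilde f)\psi : \widetilde f\in Y_\psi\}$. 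Statement~\ref{prop2} then follows: $\Co_\fS^\psi Y$ is a Banach space as the isometric preimage of the closed (hence complete) space $Y_\psi$, and it is $\pi^\cdual$-invariant because, using the covariance identity $W_\psi(\pi^\cdual(g)f)(x)=\dup{\pi^\cdual(g)f}{\pi(x)\psi}=\dup{f}{\pi(g^{-1}x)\psi}=\ell_g(W_\psi(f))(x)$ together with the left-translation invariance of $Y$ in (R1), we get $W_\psi(\pi^\cdual(g)f)=\ell_g W_\psi(f)\in Y$. This same covariance computation is precisely statement~\ref{prop3}, that $W_\psi$ intertwines $\pi^\cdual$ with left translation. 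Finally~\ref{prop4}: if $g\mapsto\ell_g\widetilde f$ is continuous $G\to Y$, then via the intertwining of~\ref{prop3} and the isometry of~\ref{prop6}, the orbit map $g\mapsto\pi^\cdual(g)f$ is continuous $G\to\Co_\fS^\psi Y$ for each $f$.

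The main obstacle is the surjectivity of $W_\psi$ in~\ref{prop6}, i.e. showing that for $\widetilde f\in Y_\psi$ the element $\pi^\cdual(\widetilde f)\psi$ is a well-defined distribution in $\fS^\cdual$ and recovers $\widetilde f$ under $W_\psi$. The definedness requires axiom (R2) to ensure the integral defining $\pi^\cdual(\widetilde f)\psi$ converges as a continuous conjugate-linear functional on $\fS$; the recovery $W_\psi(\pi^\cdual(\widetilde f)\psi)=\widetilde f$ is a Fubini-type interchange that uses the reproducing identity $\widetilde f=\widetilde f*W_\psi(\psi)$ valid on $Y_\psi$, and justifying the interchange of integration and the dual pairing again leans on (R2) and (R3). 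Everything else is bookkeeping with the covariance relation $W_\psi(\pi^\cdual(g)f)=\ell_g W_\psi(f)$ and the invariance hypotheses in (R1). Since the theorem is quoted from~\cite{Christensen2011}, in the paper itself one may simply cite it, but the sketch above is the route a self-contained argument would take.
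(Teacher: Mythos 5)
The paper offers no proof of this theorem---it is quoted directly from \cite{Christensen2011}---and your sketch correctly reproduces the standard argument from that reference: idempotence of convolution with $W_\psi(\psi)$ together with (R1)--(R3) gives the closed reproducing kernel subspace $Y_\psi$, the covariance relation $W_\psi(\pi^\cdual(g)f)=\ell_g W_\psi(f)$ handles (ii)--(iv), and surjectivity of $W_\psi$ onto $Y_\psi$ via $\widetilde{f}\mapsto \pi^\cdual(\widetilde{f})\psi$ with the computation $W_\psi(\pi^\cdual(\widetilde{f})\psi)=\widetilde{f}*W_\psi(\psi)=\widetilde{f}$ settles (v) and (vi). The only cosmetic slip is the aside $K(x,y)=\overline{K(y,x)}$: in the general Fr\'echet-space setting $\pi$ need not be unitary on $\fS$ and the pairing is a dual pairing rather than an inner product, so that symmetry is not automatic---but the theorem does not assert it, so nothing is lost.
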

 
To summarize, in order to apply the abstract theory to a concrete
representation $\pi $ of a locally compact group, we need to verify that the
representation possesses an analyzing vector $\psi $ in a suitable space of
test functions $S$, such the the reproducing formula \eqref{rf}
holds, and that the function spaces satisfies the axioms (R1) ---
(R3). The reproducing formula  is a non-trivial property and is usually related to the 
square-integrability of $\pi $, see~\cite{fuehr}. For non-integrable
representions the boundedness of the convolution operators in (R2) may 
also be rather subtle (see Proposition~\ref{prop:38}).

In our case the starting point is the
representation $(\pi_\sigma,\cV_\sigma)$ of $S$ and of $G_\sigma $. We take $\cS=\cV_\sigma^\infty$ and note that
$\cS$ is contained in the space of smooth vectors for $\left. \pi_\sigma \right|_{S}$ and is invariant
under the group $S$ and its Lie algebra. 

To formulate sufficient density conditions  for discrete sets in groups and general
sequence spaces, we recall that 
a set $X=\{x_i : i\in I\}\subseteq G$ is called relatively separated in $G$, if
$\max _{g\in G} \, \mathrm{card} \, (X \cap gU)  < \infty $ for some
(all) compact neighborhood  $U\subseteq G$. Given a neighborhood $U\subseteq G$,  $X$ is called $U$-dense, if $\bigcup
_{i\in I} x_i U  = G$.

The following theorem is a general version of the main theorem of
coorbit theory and yields the existence of atomic decompositions and
frames for coorbit spaces. 

\begin{theorem} \label{at}
Let $G$ be a connected Lie group and $(\pi ,\cH)$ a unitary representation of $G$. 
Assume that $\cS$ is a Fr\'echet space continuously embedded 
in $\cH$,
and that $\cS$ is invariant under $G$ and its Lie algebra.
Let $\psi \in \mathcal{S}$ be an  analyzing vector and   $Y$ be a Banach space of functions on $G$ satisfying the
conditions (R1) --- (R3).  \\

(A) \emph{Atomic   decompositions for coorbit spaces.}
There exists a neighborhood $U\subseteq G$, such that every
relatively separated and $U$-dense set $X= \{x_i\mid i\in I\}$ generates
an atomic decomposition  $\{\pi (x_i)\psi \mid i\in I\}$ for $\Co _{\mathcal{S}} ^\psi
Y$. This means that there exist constants $A,B >0$ and  a set of
functionals $\{c_i \mid i\in I\} \subseteq  (\Co _\pi ^\psi
Y)^*$, such that every $f\in \Co _{\mathcal{S}}^\psi
Y$ possesses the expansion
\begin{equation}
  \label{eq:c5}
f = \sum _{i\in I} c_i(f) \pi (x_i)\psi \, .  
\end{equation}
The series  \eqref{eq:c5} converges unconditionally  in $\Co _{\mathcal{S}} ^\psi Y$, whenever the
functions with compact support are dense in $Y$, otherwise the series
is 
weak$^*$-convergent in $S^*$.  The coefficient sequence satisfies
$$
 \|\sum _{i\in I} c_i(f)  1_{x_iU} \|_Y \leq A \|f\|_{\Co _{\mathcal{S}}^\psi
   Y} \, . 
$$
Conversely, if $ \sum _{i\in I} a_i  1_{x_iU} \in Y$, then
$
h=  \sum _{i\in I} a_i  \pi
(x_i)\psi
$ is in $\Co _{\mathcal{S}}^\psi Y$ and $\|h\|_{\Co _{\mathcal{S}}^\psi Y} \leq  B
\|\sum _{i\in I} a_i  1_{x_iU} \|_Y$. \\

(B) \emph{Banach frames for coorbit spaces.} 
There exists a neighborhood $U\subseteq G$, such that every
  relatively separated and $U$-dense set $X= \{x_i\mid i\in I\}$ generates
  a Banach frame $\{\pi (x_i)\psi \mid i\in I\}$ for $\Co _{\mathcal{S}} ^\psi
  Y$. This means that there exist constants $A,B >0$ and  a dual frame
  $\{ e_i \mid i \in I\}$, such that, for all $f\in \Co _{\mathcal{S}} ^\psi Y$
  $$
  A \|f\|_{\Co _{\mathcal{S}} ^\psi Y} \leq \|\sum _{i\in I} \langle f, \pi
  (x_i)\psi \rangle 1_{x_iU} \|_Y \leq B \|f\|_{\Co _{\mathcal{S}} ^\psi Y} \, ,
  $$
  and 
  $$
  f= \sum _{i\in I} \langle f, \pi
  (x_i)\psi \rangle e_i
  $$
  with unconditional convergence in $\Co _{\mathcal{S}} ^\psi Y$, when the
  functions with compact support are dense in $Y$ and weak$^*$ in $S^*$ 
  otherwise.
\end{theorem}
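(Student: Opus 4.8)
\textbf{Proof proposal for Theorem~\ref{at}.}

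The plan is to deduce both parts of the theorem from Theorem~\ref{thm:coorbitsduality} together with the standard discretization machinery of coorbit theory, the key point being that on a connected Lie group the requisite \emph{oscillation estimates} for the reproducing kernel $K(x,y)=W_\psi(\psi)(x^{-1}y)$ can be verified because $\cS$ is invariant under both the group and its Lie algebra. First I would recall that, by Theorem~\ref{thm:coorbitsduality}, $\Co_{\cS}^\psi Y$ is a Banach space isometrically isomorphic via $W_\psi$ to the reproducing-kernel subspace $Y_\psi\subseteq Y$, so it suffices to produce atomic decompositions and Banach frames for $Y_\psi$; these then transfer back through $W_\psi$ and the correspondence $\widetilde f\mapsto \pi^\cdual(\widetilde f)\psi$ of part~(\ref{prop5}). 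The functions $\pi(x_i)\psi$ arise because $W_\psi(\pi^\cdual(x_i)\psi)(y)=K(y,x_i)$ is, up to the isomorphism, exactly the sampled reproducing kernel, so sampling/atomic expansions in $Y_\psi$ correspond to series in the $\pi(x_i)\psi$.

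Next I would carry out the discretization of the reproducing formula \eqref{rf}. Fix a relatively separated, $U$-dense family $X=\{x_i\}$ with $\{U_i\}$ a Borel partition of $G$ subordinate to $\{x_iU\}$, and consider the two operators on $Y_\psi$: the ``coarse reproduction'' $T\widetilde f=\sum_i \big(\int_{U_i}\widetilde f(y)\,dy\big)\,\ell_{x_i}W_\psi(\psi)$ and the ``sampling'' operator $\widetilde f\mapsto (\widetilde f(x_i))_i$. The heart of the matter is to show that $T$ is invertible on $Y_\psi$ for $U$ small enough; this follows from the estimate $\|T-\mathrm{Id}\|_{Y_\psi\to Y_\psi}<1$, which in turn reduces to bounding $\widetilde f\mapsto \widetilde f * \mathrm{osc}_U(W_\psi(\psi))$ on $Y$, where $\mathrm{osc}_U(F)(y)=\sup_{u\in U}|F(u^{-1}y)-F(y)|$. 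Here is where connectedness and the Lie-algebra invariance of $\cS$ enter: one writes $W_\psi(\psi)(u^{-1}y)-W_\psi(\psi)(y)$ as an integral of derivatives $\tfrac{d}{dt}W_\psi(\psi)(\exp(-tX)y)=W_{\pi^\infty(X)\psi}(\pi^\cdual\!\text{-type})$ along a path joining $u$ to $e$ inside $U$, obtaining $\mathrm{osc}_U(W_\psi(\psi))\le C\sum_X |W_{\psi}(\pi^\infty(X)\psi)|$-type bounds with $C\to 0$ as $U\to\{e\}$; since $\pi^\infty(X)\psi\in\cS$, axiom (R3) makes the corresponding convolution operators bounded on $Y$, so the oscillation convolution operator has operator norm $o(1)$. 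Granting invertibility of $T$, the atoms are $\pi(x_i)\psi$ with coefficient functionals $c_i(f)=\big(W_\psi(T^{-1}W_\psi(f))\big)$ integrated over $U_i$, up to the isomorphism; one then reads off \eqref{eq:c5}, the bound $\|\sum_i c_i(f)1_{x_iU}\|_Y\le A\|f\|$, and the converse statement from solidity of $Y$ and boundedness of $\widetilde f\mapsto \sum_i a_i\ell_{x_i}W_\psi(\psi)$, again via (R3). Unconditional convergence holds when compactly supported functions are dense in $Y$ (so finite partial sums over $U_i$ approximate in norm), and weak$^*$ convergence in $\cS^*$ otherwise, using that $W_\psi$ is a weak$^*$-to-weak$^*$ homeomorphism onto its image.

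For part~(B) I would use the dual construction: instead of discretizing the right factor, apply $T^{-1}$ first and sample, so that the analysis operator $f\mapsto (\langle f,\pi(x_i)\psi\rangle)_i=(W_\psi(f)(x_i))_i$ is bounded below and above in the $Y$-norm of $\sum_i \langle f,\pi(x_i)\psi\rangle 1_{x_iU}$ — the lower bound being exactly the statement that $T$ (now thought of as reconstruction from samples) is bounded below, again by the $o(1)$ oscillation estimate; the dual frame $\{e_i\}$ is then $e_i=\pi^\cdual(T^{-1}\mathbf{1}_{U_i})\psi$ or the analogous object, and the reconstruction $f=\sum_i\langle f,\pi(x_i)\psi\rangle e_i$ with the stated convergence follows as before. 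The main obstacle, as in all coorbit theory for non-integrable representations, is the oscillation estimate controlling $\mathrm{osc}_U(W_\psi(\psi))$ in $Y$: one must show both that this kernel lies in $Y$ (or in the convolution algebra acting on $Y$) and that its oscillation has small operator norm, and this is precisely where one needs the differentiability furnished by $\pi^\infty(X)\psi\in\cS=\cV_\sigma^\infty$ together with axiom (R3); everything else is the routine transfer through the isometry $W_\psi$ of Theorem~\ref{thm:coorbitsduality}.
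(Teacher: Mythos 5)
Your proposal is correct and follows essentially the same route as the paper: the paper's proof simply invokes the discretization theorems of \cite{Christensen2012} and verifies their hypotheses by observing that the left and right derivatives of the kernel $\Phi=W_\psi(\psi)$ are $W_\psi(\pi^\infty(X)\psi)$ and $W_{\pi^\infty(X)\psi}(\psi)$, which fall under axiom (R3) because $\mathcal{S}$ is $\pi^\infty$-invariant, exactly the point you identify as the heart of the matter. You merely unpack the cited machinery (the approximation operator, the oscillation estimate, the Neumann-series inversion) that the paper leaves as a reference.
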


\begin{proof}
(A) Under  the stated conditions the reproducing kernel space
  $Y_\psi$ with reproducing kernel $\Phi= W_\psi(\psi)$ satisfies
  both Theorem 2.13 and 2.14 from \cite{Christensen2012}. Note that by unitarity
  left and right derivatives of the kernel (refer to \cite{Christensen2012}) 
  correspond respectively to the functions
  $W_\psi(\pi^\infty(X)\psi)$ and $W_{\pi^\infty(X)\psi}(\psi)(x)$ 
  for $X\in\mathfrak{g}$. Since $\mathcal{S}$ is $\pi^\infty$-invariant
  we therefore see that convolution with left or right derivatives
  of $\Phi$ will be continuous on $Y$. This is enough to ensure the
  invertibility of the appropriate operators in \cite{Christensen2012} leading to
  atomic decompositions.

(B)   Since right differentiation of the kernel $\Phi = W_\psi(\psi)$
  corresponds to the function $W_{\pi^\infty(X)\psi}(\psi)$
  and $\mathcal{S}$ is $\pi^\infty$-invariant, the  conditions
  ensure that the required convolution in Lemma 2.8 from \cite{Christensen2012}
  is continuous. Therefore Theorem 2.11 from \cite{Christensen2012} provides
  the desired frames.
\end{proof}

\begin{remark} If $Y$ is a weighted Lebesgue space, 
$Y = L^p_v(G)$, then $\|\sum _{i\in I} c_i 1_{x_iU} \|_{L^p_v}$ is equivalent to the weighted $\ell
^p$-norm
$\|c\|_{\ell ^p_v} =$ $  \Big(\sum _{i\in I} |c_i|^p
 v(x_i)^p \Big)^{1/p}$. Thus the implicit definition of the sequence
 space norm is perfectly natural. 
\end{remark}

\subsection{Coorbits for the discrete series representation}
\label{sec:coorbit-description}

In the following sections we apply the abstract theory to the
representation $\pi _\sigma $ on either $G_\sigma $ or $S$ and will
show that the weighted Bergman spaces arise as  coorbits 
for the groups $S$ and $G_\sigma$. Then we derive 
atomic decompositions and frames for these spaces. We will mainly use
the set-up in  \cite{Christensen2012}. As mentioned earlier we take $\cS$ for the
space of smooth vectors in $\cV_\sigma$. For the moment we also fix the notation
$\psi=1_{\nball}\in \mathcal{V}_\sigma^\infty$.

We start by a simple observation. Let $x\in G_\sigma$ and set $w=x\cdot 0$.
Let $f  \in \mathcal{V}_\sigma^{-\infty}$ and write $f=\sum_k f_k$ as in (\ref{eq:VsI1}) and note that this
sum converges in the weak topolgy.  Hence we may  interchange the
sum with $f$ and  obtain
\begin{equation}
  \label{waveletcoeff1}
  W^\sigma_\psi(f)(x)
  = \sum_k W_{\psi}^\sigma (f_k)(x)
  = \sum_k \frac{1}{d^\sigma} f_k(w) 
  = \frac{1}{d^\sigma} f(w),
\end{equation}
since each $f_k$ is in $\mathcal{V}_\sigma$.

\begin{proposition}

  \label{prop:integrability}
  Let $\psi=1_{\nball}$ and let $\sigma>n$, then $\psi$ is both $S$-cyclic 
  and $G_\sigma$-cyclic in $\mathcal{V}_\sigma^\infty$. 
  If $\varphi $ is a 
  smooth vector, then there is a constant
  $C$ such that
  $|W_\psi^\sigma(\varphi)| \leq C |W_\psi^\sigma(\psi)|$. Furthermore, for
  $1\leq p<\infty$ the function
  $W_\psi^\sigma(\varphi)$ is in $L^p_t(S)$ for $t+p\sigma /2 > n$.
  If $\sigma$ is rational,  then also  
  $W_\psi^\sigma(\varphi) \in L^p_t(G_\sigma)$ for the same range of parameters.
\end{proposition}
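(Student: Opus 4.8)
The plan is to reduce all four assertions to the explicit matrix‑coefficient formula of Corollary~\ref{rkrep} and the integration formula \eqref{eq:int}. Recall from \eqref{absvalmatrixcoeff} and \eqref{eq:c3} that for $\varphi\in\mathcal{V}_\sigma^\infty$ and $w=x\cdot o$ one has $|W_\psi^\sigma(\varphi)(x)|=(1-|w|^2)^{\sigma/2}|\varphi(w)|$ and $|W_\psi^\sigma(\psi)(x)|=(1-|w|^2)^{\sigma/2}$. For cyclicity: Proposition~\ref{resttos}(\ref{resttos-1}) gives density of $\mathrm{span}\,\{\pi_\sigma(x)\psi:x\in S\}$ in $\mathcal{V}_\sigma$, but here we need the stronger statement of weak cyclicity in the Fr\'echet space $\mathcal{V}_\sigma^\infty$, namely that $f\in\mathcal{V}_\sigma^{-\infty}$ with $W_\psi^\sigma(f)\equiv 0$ on $S$ forces $f=0$. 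By \eqref{waveletcoeff1}, $W_\psi^\sigma(f)(x)=d^{-\sigma}f(x\cdot o)$ with $d\neq 0$ (since $|d|^2-|b|^2=1$, cf.\ the proof of Corollary~\ref{rkrep}), so $f$ vanishes at every point $x\cdot o$, hence on all of $\nball$ by transitivity of the $S$-action (Theorem~\ref{th:2.3}(\ref{th:2.3-3})). By \eqref{eq:VsI1} and the growth estimates from the proof of Lemma~\ref{bd}(\ref{bd-3}), the homogeneous expansion $f=\sum_k f_k$ converges locally uniformly, so $f$ is holomorphic on $\nball$; vanishing identically, all of its homogeneous components are zero, whence $f=0$. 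As $S\subseteq G_\sigma$, $\psi$ is then also $G_\sigma$-cyclic.

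For the domination and membership in $L^p_t(S)$: given $\varphi\in\mathcal{V}_\sigma^\infty$, set $C=\sup_{\nball}|\varphi|<\infty$ by Lemma~\ref{bd}(\ref{bd-3}); the formulas above immediately give the pointwise bound $|W_\psi^\sigma(\varphi)|\le C\,|W_\psi^\sigma(\psi)|$. Since $L^p_t$ is solid it is enough to estimate $W_\psi^\sigma(\psi)$, and using $|W_\psi^\sigma(\psi)(x)|^p=(1-|x\cdot o|^2)^{p\sigma/2}$ together with \eqref{eq:int},
\[
\|W_\psi^\sigma(\psi)\|_{L^p_t(S)}^p=\int_S(1-|x\cdot o|^2)^{p\sigma/2+t}\,dx=\int_{\nball}(1-|z|^2)^{p\sigma/2+t-n-1}\,dv(z),
\]
which is finite precisely when $p\sigma/2+t-n-1>-1$, i.e.\ $t+p\sigma/2>n$. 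Hence $W_\psi^\sigma(\varphi)\in L^p_t(S)$ in that range.

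For membership in $L^p_t(G_\sigma)$ when $\sigma$ is rational: then $K_\sigma$ is compact, and the diffeomorphism $S\times K\to G$, $(h,k)\mapsto hk$, together with the Haar-measure factorization of Theorem~\ref{th:2.3}(\ref{th:2.3-2}), lifts to $G_\sigma$, giving $\int_{G_\sigma}h(x)\,dx=c\int_S\int_{K_\sigma}h(sk)\,dk\,ds$. The integrand $|W_\psi^\sigma(\varphi)(x)|^p(1-|x\cdot o|^2)^t$ depends on $x$ only through $x\cdot o$, and $K_\sigma$ fixes $o$, so it is right $K_\sigma$-invariant; since $K_\sigma$ has finite Haar mass,
\[
\|W_\psi^\sigma(\varphi)\|_{L^p_t(G_\sigma)}^p=c\,\mathrm{vol}(K_\sigma)\,\|W_\psi^\sigma(\varphi)\|_{L^p_t(S)}^p,
\]
which is finite in the same range $t+p\sigma/2>n$ by the previous step.

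The step that requires the most care is the last one: one must transfer the decomposition $G=SK$ and its Haar-measure factorization from $G$ to the covering group $G_\sigma$, and it is precisely the rationality of $\sigma$ — equivalently the compactness of $K_\sigma$ — that keeps $\mathrm{vol}(K_\sigma)$ finite and lets the reduction to the $S$-integral go through; for irrational $\sigma$ the subgroup $K_\sigma$ is non-compact and this reduction fails. Everything else is a direct consequence of the matrix-coefficient identity and the change of variables \eqref{eq:int}.
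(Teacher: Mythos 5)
Your proposal is correct and follows essentially the same route as the paper's proof: cyclicity via the identity $W_\psi^\sigma(f)(x)=d^{-\sigma}f(x\cdot o)$ from \eqref{waveletcoeff1} together with transitivity of $S$ on $\nball$, the pointwise bound from boundedness of smooth vectors (Lemma~\ref{bd}), the $L^p_t(S)$ computation via \eqref{eq:int}, and the reduction of the $G_\sigma$-integral to a finite multiple of the $S$-integral using compactness of $K_\sigma$. The extra details you supply (local uniform convergence of $\sum_k f_k$ for $f\in\mathcal{V}_\sigma^{-\infty}$, and the explicit right-$K_\sigma$-invariance argument) are sound elaborations of steps the paper leaves implicit.
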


\begin{proof}
  Cyclicity for both $S$ and $G_\sigma$ follows from equation (\ref{waveletcoeff1}).
  By Lemma \ref{le:1andK} and the fact that  $\varphi\in \mathcal{V}_\sigma$ we obtain that  
  $W_\psi^\sigma(\varphi)(x) 
  = \frac{1}{d^\sigma} \varphi(x\cdot o)
  = W_\psi^\sigma(\psi)(x) \varphi(x\cdot o)$.
Since $\varphi$ is bounded on $\nball$ by Lemma~\ref{bd}, the estimate
  follows. 
By Corollary \ref{rkrep} we have 
  \begin{equation*}
    |W_\psi^\sigma(\psi)(x)| 
    = (1-|x\cdot o|^2)^{\sigma/2}\, ,
  \end{equation*}
  so $|W_\psi^\sigma (\psi)|$ is in $L^p_t(S)$ if and only if  the following integral 
  is finite
  \begin{align}
    \int_{S} (1-|x\cdot o|^2)^{p\sigma/2+t} \,dx
    = \int_{\nball} (1-|z|^2)^{t+p \sigma/2-n-1}\,dv(z),
  \end{align}
  which happens when $t+p\sigma/2 > n$. If $\sigma>n$ is rational,
  then  the integral over $G_\sigma$ reduces to a finite multiple
  of the integral over $S$, and  the proof is complete. 
  \end{proof}

\subsection{Bergman spaces as coorbits for the group $S$}

In this section we verify the assumptions of Theorem~\ref{thm:coorbitsduality} for the
representations $\pi _\sigma $ on $S$ and then  identify the weighted Bergman spaces as coorbit
spaces with respect to  $\pi _\sigma |_S $.  For simplicity we write $\pi_\sigma$ for $\pi_\sigma |_S$ as
it will always be clear if we are using $S$ or $G_\sigma$. 

First we show that the coorbit spaces $\Co L^p_{\alpha+n+1-\sigma
  p/2}(S)$ with respect to the representation $\pi _\sigma $ are
well defined. In this case we deal with a weighted $L^p$ space on $S$
with weight $(1- |x\cdot o|^2)^t $ for some $t$. This space is
invariant under left and right translations because the weight is
easily seen to be moderate (a property which is equivalent to
translation invariance).

\begin{proposition}
  \label{prop:reproducing}
  Let $\psi=1_{\nball}$, $\sigma>n$ and $1\leq p <\infty$.

(i)  Then there is a positive 
  constant $C$ such that 
    $$
  W_\psi^\sigma(f)*W_\psi^\sigma(\psi) = CW_\psi^\sigma(f) \qquad
  \text{ for every } f\in \mathcal{V}_\sigma^{-\infty} \, ,
  $$
  where convolution is on the group $S$.  Thus
  $\sqrt{C}\psi$ is an analyzing vector for $\pi _\sigma $.

(ii) If $-1 <\alpha< p(\sigma-n)-1$, then  the mapping 
  $$
  L^p_{\alpha+n+1-\sigma p/2}(S)\times \mathcal{V}_\sigma^\infty \ni (\wf,\varphi)
  \mapsto \int_S \wf(x)\dup{\pi^*(x)\psi}{\varphi}\,dx \in \mathbb{C}
  $$
  is continuous. 

As a consequence,  the coorbit space  
  $\Co_{\mathcal{V}_\sigma^{\infty}}^\psi L^p_{\alpha+n+1-\sigma
    p/2}(S)$ is a non-trivial  Banach space
  for $-1<\alpha < p(\sigma-n)-1$.
\end{proposition}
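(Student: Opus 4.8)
The plan is to prove (i) and (ii) by explicit computation on the Hilbert space $\cV_\sigma$ together with H\"older's inequality, and then to read off the final assertion from Theorem~\ref{thm:coorbitsduality}. The heart of (i) is the \emph{orthogonality relation} on $\cV_\sigma$. For $f,g\in\cV_\sigma$, writing $x=\left(\begin{smallmatrix}a&b\\ c^t&d\end{smallmatrix}\right)$ and $w=x\cdot o$, Lemma~\ref{le:1andK} and the identity $|d|^{-\sigma}=(1-|w|^2)^{\sigma/2}$ of Corollary~\ref{rkrep} give $W_\psi^\sigma(f)(x)\overline{W_\psi^\sigma(g)(x)}=(1-|x\cdot o|^2)^\sigma f(x\cdot o)\overline{g(x\cdot o)}$, a function of $x\cdot o$ alone, so by~\eqref{eq:int}
\begin{equation*}
  \int_S W_\psi^\sigma(f)(x)\,\overline{W_\psi^\sigma(g)(x)}\,dx=\int_\nball f(z)\overline{g(z)}\,(1-|z|^2)^{\sigma-n-1}\,dv(z)=\frac{1}{c_{\sigma-n-1}}\,\ip{f}{g}_{\cV_\sigma}\, ,
\end{equation*}
both sides finite by Proposition~\ref{resttos}(ii). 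Writing the convolution on $S$ as $(F\ast H)(x)=\int_S F(y)H(y^{-1}x)\,dy$ and using unitarity to rewrite $W_\psi^\sigma(\psi)(y^{-1}x)=\ip{\pi_\sigma(y)\psi}{\pi_\sigma(x)\psi}_{\cV_\sigma}=\overline{W_\psi^\sigma(\pi_\sigma(x)\psi)(y)}$, the orthogonality relation with $g=\pi_\sigma(x)\psi\in\cV_\sigma$ gives $W_\psi^\sigma(f)\ast W_\psi^\sigma(\psi)=c_{\sigma-n-1}^{-1}W_\psi^\sigma(f)$ for every $f\in\cV_\sigma$; a harmless rescaling of $\psi$ makes it analyzing on $\cV_\sigma$. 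To extend the identity to $f=\sum_k f_k\in\cV_\sigma^{-\infty}$ (the expansion of~\eqref{waveletcoeff1}, each $f_k$ a polynomial in $\cV_\sigma$) I would interchange the sum with the convolution operator $\,\cdot\ast W_\psi^\sigma(\psi)$; the available control is that $\sum_k|f_k|$ converges locally uniformly on $\nball$, a consequence of $\|f_k\|_{\cV_\sigma}\le C(1+k)^N$ and the estimate $|f_k(z)|\le C(1+k)^{n/2}|z|^k\,\|f_k\|_{L^2(\mathbb{S}^{2n-1})}$ coming from the reproducing kernel $H_k$ of $\cP_k$ in the proof of Lemma~\ref{bd}. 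I expect this interchange to be \emph{the main obstacle}: in the non-integrable range $n<\sigma\le 2n$ we have $W_\psi^\sigma(\psi)\notin L^1(S)$ (Theorem~\ref{th:2.7}, Proposition~\ref{resttos}(iii)), so naive Fubini is unavailable and the convolution must be read in the weak sense of~\cite{Christensen2012}.

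For (ii), since $\psi\in\cV_\sigma^\infty$ we have $\pi^\cdual(x)\psi=\pi_\sigma(x)\psi$, so by unitarity $\dup{\pi^\cdual(x)\psi}{\varphi}=\ip{\pi_\sigma(x)\psi}{\varphi}_{\cV_\sigma}=\overline{W_\psi^\sigma(\varphi)(x)}$ and the form becomes $(\wf,\varphi)\mapsto\int_S\wf(x)\,\overline{W_\psi^\sigma(\varphi)(x)}\,dx$. Put $t:=\alpha+n+1-\sigma p/2$. Splitting the weight as $(1-|x\cdot o|^2)^t=(1-|x\cdot o|^2)^{t/p}(1-|x\cdot o|^2)^{-t/p}$ and applying H\"older's inequality with exponents $p,p'$ bounds the form by $\|\wf\|_{L^p_t(S)}\,\|W_\psi^\sigma(\varphi)\|_{L^{p'}_{-tp'/p}(S)}$. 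By Proposition~\ref{prop:integrability}, $W_\psi^\sigma(\varphi)\in L^{p'}_{-tp'/p}(S)$ if and only if $-tp'/p+p'\sigma/2>n$, which is equivalent to $\alpha<p(\sigma-n)-1$ --- so the hypothesis is exactly what is needed. Moreover $|W_\psi^\sigma(\varphi)(x)|=|W_\psi^\sigma(\psi)(x)|\,|\varphi(x\cdot o)|\le(\sup_{\nball}|\varphi|)\,|W_\psi^\sigma(\psi)(x)|$, and the proof of Lemma~\ref{bd}(iii) dominates $\sup_{\nball}|\varphi|$ by a continuous seminorm $q(\varphi)$ on $\cV_\sigma^\infty$; hence the form is bounded by $\|W_\psi^\sigma(\psi)\|_{L^{p'}_{-tp'/p}(S)}\,\|\wf\|_{L^p_t(S)}\,q(\varphi)$ with the first factor finite, which is the asserted joint continuity.

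For the final assertion take $Y=L^p_t(S)$ with $t=\alpha+n+1-\sigma p/2$. Axiom (R1) holds since the weight $(1-|x\cdot o|^2)^t$ is moderate; part (i) supplies the analyzing vector, and part (ii) is precisely axiom (R2); granting also the convolution bounds (R3), Theorem~\ref{thm:coorbitsduality} (or the weaker hypotheses of~\cite{Christensen2011}) shows $\Co_{\cV_\sigma^\infty}^\psi Y$ is a Banach space, isometric to the reproducing-kernel subspace $Y_\psi\subseteq Y$. Non-triviality is immediate: by Corollary~\ref{rkrep} and~\eqref{eq:int},
\begin{equation*}
  \|W_\psi^\sigma(\psi)\|_{L^p_t(S)}^p=\int_S(1-|x\cdot o|^2)^{p\sigma/2+t}\,dx=\int_\nball(1-|z|^2)^\alpha\,dv(z)\, ,
\end{equation*}
which is finite exactly because $\alpha>-1$, so the nonzero vector $\psi$ already lies in $\Co_{\cV_\sigma^\infty}^\psi Y$.
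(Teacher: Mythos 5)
Your part (ii) and the non-triviality argument coincide with the paper's: the paper also reduces (ii) to membership of $W_\psi^\sigma(\psi)$ in the K\"othe dual $L^q_{\sigma q/2-(\alpha+n+1)q/p}(S)$ via Proposition~\ref{prop:integrability} (your H\"older computation with $t=\alpha+n+1-\sigma p/2$ is exactly this), and it also exhibits $\psi$ itself as a nonzero element of the coorbit using $\alpha>-1$. Your explicit remark that $\sup_{\nball}|\varphi|$ is dominated by a continuous seminorm on $\cV_\sigma^\infty$ is a welcome point that the paper leaves implicit. For (i) restricted to $f\in\cV_\sigma$, your orthogonality-relation packaging is a cosmetic variant of the paper's computation: both reduce, via Lemma~\ref{conv}, to the statement that integration against $(1-\dup{w}{z})^{-\sigma}(1-|z|^2)^{\sigma-n-1}$ reproduces $f(w)$, i.e.\ to the reproducing property of $K_{\sigma-n-1}$.

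The one substantive issue is the extension of (i) to $f\in\cV_\sigma^{-\infty}$, which you correctly flag but then propose to handle by interchanging $\sum_k f_k$ with the convolution operator, justified by locally uniform convergence of $\sum_k|f_k|$ on $\nball$. That justification cannot work as stated: the convolution is an integral over all of $S$ (equivalently over the whole ball with the weight $(1-|z|^2)^{\sigma-n-1}$), so local uniform convergence on compacta of $\nball$ gives no control, and in the range $n<\sigma\le 2n$ there is no integrable dominant to invoke. The paper avoids this limit entirely: by \eqref{waveletcoeff1} one already knows $W_\psi^\sigma(f)(y)=d_y^{-\sigma}f(y\cdot o)$ \emph{pointwise} for $f\in\cV_\sigma^{-\infty}$, with $f$ a concrete holomorphic function on $\nball$; substituting this and $d_{y^{-1}x}=\overline{d}_yd_x(1-\dup{w}{z})$ into the convolution and passing to the ball via Lemma~\ref{conv} turns $W_\psi^\sigma(f)*W_\psi^\sigma(\psi)(x)$ into $C\,d_x^{-\sigma}$ times the pairing of $f$ with $(1-\dup{\cdot}{w})^{-\sigma}$, which is (up to the factor $\overline{d}_x^{-\sigma}$) the pairing $\dup{f}{\pi_\sigma(x)\psi}$ against a fixed smooth vector; a second application of \eqref{waveletcoeff1} evaluates this as $C\,d_x^{-\sigma}f(w)=C\,W_\psi^\sigma(f)(x)$. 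The only sum--integral interchange needed is the one against a fixed smooth vector, which is already built into \eqref{waveletcoeff1} (weak convergence of $\sum_k f_k$ in $\cV_\sigma^{-\infty}$), not an interchange with an operator. You should either adopt this route or make explicit that the convolution is being read as this weak pairing, as in \cite{Christensen2012}.
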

\begin{proof}
  Cyclicity of $\psi$ has been stated in Proposition~\ref{resttos}.
 Next, we verify the reproducing formula. Let   $x=\begin{pmatrix}
    a_x & b_x \\ c_x^t & d_x
  \end{pmatrix} \in SU(n,1)$ and   $w=x\cdot o$, and likewise
  $y=\begin{pmatrix}
    a_y & b_y \\ c_y^t & d_y
  \end{pmatrix}$ with $z=y\cdot o$.   Then the proof of Corollary \ref{rkrep} gives
  $1-|w|^2=1/|d_x|^2$.  A similar computation yields that 
 $d_{y^{-1}x}=\overline{d}_yd_x (1-\dup{w}{z})$.
If $f\in \mathcal{V}_\sigma^{-\infty}$, then   by
(\ref{waveletcoeff1}) and Lemma~\ref{conv} we obtain that 
  \begin{align}
    W_\psi^\sigma(f)*W_\psi^\sigma(\psi)(x)
    &= \int_{S} \frac{1}{d_y^\sigma} f(y\cdot o) 
    \frac{1}{(d_{y^{-1}x})^\sigma} \,dy \notag \\
    &=C \frac{1}{d_x^\sigma} \int_{\nball}   
    f(z)\frac{(1-|z|^2)^\sigma}{(1-\dup{w}{z})^\sigma} 
    \frac{dv(z)}{(1-|z|^2)^{n+1}} \notag \\
    &=CW_\psi^\sigma (f)(x). \label{eq:c6}
    \end{align}

    For the continuity it is enough to show that
    $\wf\mapsto \int |\wf(x)W_\psi^\sigma(\psi)(x)|\,dx$ is continuous.
    This will follow  provided  that $W_\psi^\sigma(\psi)$ is in
    the K\"othe dual of $L^p_{\alpha+n+1-\sigma p/2}(S)$
    which is $L^q_{\sigma q/2-(\alpha+n+1)q/p}(S)$ where $1/p+1/q=1$. 
    According to Proposition~\ref{prop:integrability}
    this is the case  if
    $\sigma q/2-(\alpha+n+1)q/p +\sigma q/2 > n$ which is
    equivalent to $\alpha < p(\sigma-n)-1$.
    
  Finally,  the coorbit space is non-trivial, because
    $W_\psi^\sigma(\psi) \in L^p_{\alpha+n+1-\sigma p/2}(S)$ by Proposition~\ref{prop:integrability}, and therefore
    $\psi \in \Co _{\pi _\sigma} ^\psi L^p_{\alpha+n+1-\sigma p/2}(S)$.
  \end{proof}

Next we identify the classical Bergman spaces on $\nball $ with the
coorbit spaces of the representation $\pi _\sigma$. 

\begin{theorem} \label{identberg}
  Let $\alpha >-1$, $1\leq p < \infty $,  and  $\psi=1_{\nball}$.
  A holomorphic function $f$ 
  is in $\mathcal{A}^p_\alpha$ if and only if 
  $f\in \mathcal{V}_\sigma^{-\infty}$ and $W_\psi^\sigma(f)$ is in $L^p_{\alpha+n+1-\sigma p/2}(S)$.
  Therefore the Bergman space $\mathcal{A}^p_\alpha$ is equal to the coorbit space
  $\Co_{\mathcal{V}_\sigma^{\infty}}^\psi  L^p_{\alpha+n+1-\sigma p/2}(S)$ for $-1<\alpha < p(\sigma-n)-1$.
\end{theorem}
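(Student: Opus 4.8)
The plan is to establish the two directions of the equivalence and then invoke Proposition~\ref{prop:reproducing} to conclude. The core computational fact is Corollary~\ref{rkrep}: for $x\in S$ with $w = x\cdot o$ one has $|W_\psi^\sigma(f)(x)| = (1-|w|^2)^{\sigma/2}|f(w)|$, valid for $f\in\cV_\sigma$, and more generally formula~\eqref{waveletcoeff1} gives $W_\psi^\sigma(f)(x) = d^{-\sigma}f(w)$ for $f\in\cV_\sigma^{-\infty}$, so that $|W_\psi^\sigma(f)(x)| = (1-|w|^2)^{\sigma/2}|f(w)|$ continues to hold on $\cV_\sigma^{-\infty}$. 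Using the isometry between $L^p$-spaces on $\nball$ and on $S$ (the Lemma following \eqref{eq:int}, transported to $S$ by Theorem~\ref{th:2.3}(\ref{th:2.3-3})), I would compute
\[
\|W_\psi^\sigma(f)\|_{L^p_{\alpha+n+1-\sigma p/2}(S)}^p
= \int_S (1-|x\cdot o|^2)^{p\sigma/2} |f(x\cdot o)|^p (1-|x\cdot o|^2)^{\alpha+n+1-\sigma p/2}\,dx
= c_\alpha^{-1}\|f\|_{L^p_\alpha(\nball)}^p,
\]
so that for a holomorphic $f$ which already lies in $\cV_\sigma^{-\infty}$, membership in $\cA^p_\alpha$ is \emph{equivalent} to $W_\psi^\sigma(f)\in L^p_{\alpha+n+1-\sigma p/2}(S)$, with equality (up to the constant) of norms.

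The remaining issue is the compatibility of the two notions of distribution: on the coorbit side $f\in\cS^\cdual = \cV_\sigma^{-\infty}$, whereas on the Bergman side $f$ is an honest holomorphic function on $\nball$. So I must show (a) every $f\in\cA^p_\alpha$ defines an element of $\cV_\sigma^{-\infty}$, and (b) every $f\in\cV_\sigma^{-\infty}$ whose wavelet transform lies in the $L^p$-space is (represented by) a holomorphic function on $\nball$. For (a): expand $f=\sum_k f_k$ into homogeneous components via the Taylor series; by \eqref{eq:VsI1} it suffices to show $\|f_k\|_{\cV_\sigma}$ grows at most polynomially in $k$. This follows from the subharmonicity/mean-value estimate bounding $\|f_k\|_{L^2(\mathbb{S}^{2n-1})}$ (hence $\|f_k\|_{\cV_\sigma}$, by the Gamma-factor identity in the proof of Lemma~\ref{bd}) by the growth of $f$, which is controlled since $f\in\cA^p_\alpha$; concretely one bounds $|f(z)|$ on spheres of radius $r<1$ using the $\cA^p_\alpha$-norm and lets $r\to1$ with the Cauchy estimates, or cites the standard containment $\cA^p_\alpha\subseteq\cA^\infty_\beta$ for suitable $\beta$ together with \eqref{eq:nonZgamma}. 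For (b): given $f=\sum_k f_k\in\cV_\sigma^{-\infty}$ with each $f_k\in\cP_k$, the partial sums $\sum_{k\le N}f_k$ are holomorphic polynomials; the hypothesis $W_\psi^\sigma(f)\in L^p$ translates via the displayed identity into $\sum_{k\le N}f_k$ being Cauchy in $\cA^p_\alpha$ (or directly into a growth bound on $\|f_k\|$ forcing $\sum f_k$ to converge locally uniformly on $\nball$), hence the limit is holomorphic and equals $f$ as a distribution.

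With (a) and (b) in hand the norm identity above upgrades to a genuine isometry (up to the constant $c_\alpha$), giving the first assertion. For the final sentence, Proposition~\ref{prop:reproducing}(i) shows $\sqrt{C}\psi$ is an analyzing vector for $\pi_\sigma$ on $S$, and Proposition~\ref{prop:reproducing}(ii) shows $L^p_{\alpha+n+1-\sigma p/2}(S)$ satisfies the needed axioms in the range $-1<\alpha<p(\sigma-n)-1$ (solidity, left/right translation invariance from moderateness of the weight, and the continuity statement (R2)); (R3) is the content of Proposition~\ref{prop:integrability} combined with the pointwise domination $|W_\psi^\sigma(\varphi)|\le C|W_\psi^\sigma(\psi)|$. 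Then Theorem~\ref{thm:coorbitsduality} identifies $\Co_{\cV_\sigma^\infty}^\psi L^p_{\alpha+n+1-\sigma p/2}(S)$ as a Banach space, and by the equivalence just proved it coincides (isometrically, up to a constant) with $\cA^p_\alpha$. The main obstacle I anticipate is part (b) — specifically, verifying that a distribution vector in $\cV_\sigma^{-\infty}$ whose wavelet transform has the prescribed integrability is actually represented by a holomorphic function with no singular part; this is where the polynomial-growth characterization \eqref{eq:VsI1} and the sphere-norm/Bergman-norm dictionary must be used carefully, rather than the routine Fubini manipulations elsewhere.
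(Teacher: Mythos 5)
Your proposal is correct and has the same skeleton as the paper's proof: the pointwise identity $|W_\psi^\sigma(f)(x)|=(1-|x\cdot o|^2)^{\sigma/2}|f(x\cdot o)|$ from \eqref{waveletcoeff1} plus the measure isometry turns the weighted $L^p(S)$ norm of the wavelet transform into the $\mathcal{A}^p_\alpha$ norm, and the only real work is the embedding $\mathcal{A}^p_\alpha\subseteq\mathcal{V}_\sigma^{-\infty}$. Where you diverge is in how that embedding is proved. The paper uses the chain $\mathcal{A}^p_\alpha\subseteq\mathcal{A}^1_\alpha\subseteq\mathcal{A}^2_{n+1+2\alpha}$ and then Bessel's inequality for the orthonormal basis $\varphi_\gamma$ of $\mathcal{A}^2_\beta$ to get $\|f_k\|_{\mathcal{V}_\sigma}^2\leq C(1+k)^n$; you instead propose the pointwise growth bound $|f(z)|\lesssim(1-|z|^2)^{-(n+1+\alpha)/p}\|f\|_{\mathcal{A}^p_\alpha}$ together with Cauchy estimates on spheres $|z|=r$, optimized at $r\sim 1-1/k$, which also yields polynomial growth of $\|f_k\|_{L^2(\mathbb{S}^{2n-1})}$ and hence of $\|f_k\|_{\mathcal{V}_\sigma}$. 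Both work; the paper's route stays entirely inside the Hilbert-space machinery already set up in Lemma~\ref{le:repK}, while yours is more elementary but needs the (standard, not proved in this paper) point-evaluation bound for $\mathcal{A}^p_\alpha$. Two smaller remarks. First, the ``obstacle (b)'' you flag is real but is resolved without the $L^p$ hypothesis: the characterization \eqref{eq:VsI1} together with the sphere bound $|f_k(z)|\leq(1+k)^{n/2}\|f_k\|_{L^2(\mathbb{S}^{2n-1})}|z|^k$ and the Gamma-factor identity already forces $\sum_k f_k$ to converge locally uniformly on $\nball$, so \emph{every} element of $\mathcal{V}_\sigma^{-\infty}$ is a genuine holomorphic function; the paper uses this silently in \eqref{waveletcoeff1}. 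Second, your attribution of (R3) to Proposition~\ref{prop:integrability} plus the pointwise domination $|W_\psi^\sigma(\varphi)|\leq C|W_\psi^\sigma(\psi)|$ is not adequate: integrability of the kernel does not give boundedness of the convolution operator on $L^p_t(S)$ here (that is Proposition~\ref{prop:38}, which rests on the Forelli--Rudin estimate of Theorem~\ref{Thm:Zhu1} and appears only after this theorem). For the identification itself this does not matter, since well-definedness of the coorbit space needs only parts (i) and (ii) of Proposition~\ref{prop:reproducing}, which is all the paper invokes; (R3) is required only later for the discretization results.
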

\begin{proof}
First, $\Co_{\mathcal{V}_\sigma^{\infty}}^\psi  L^p_{\alpha+n+1-\sigma
  p/2}(S)$ is well defined by Proposition~\ref{prop:reproducing}. 
  Let $x\in S$ and  $w=x\cdot o$. Then  
   $1-|x\cdot o|^2 = 1-|w|^2= 1/|d|^2$. If $f\in
  \mathcal{V}_\sigma^{-\infty}$, then we have by Lemma \ref{le:1andK}
  and \eqref{waveletcoeff1} 
  \[|f(w)| = (1-|x\cdot o|^2)^{- \sigma/2} |W_\psi(f)(x)|\, .\]
  Using 
  the isometry  (\ref{eq:normequiv}) we see that $f\in L^p_\alpha
  (\nball )$, if and only if $W_\psi (f)  \in L^p_{\alpha+n+1-\sigma p/2}(S)$.

It remains to be shown that $\mathcal{A}^p_\alpha \subseteq V_\sigma
^{-\infty}$. Since the  measure $dv_\alpha$ is a probability measure, we have
$\mathcal{A}^p_\alpha \subseteq \mathcal{A}^1_\alpha$ for all $p\geq 1$.
  So it suffices to show that   $ {\cA}^1_\alpha \subseteq \mathcal{V}_\sigma^{-\infty}$
  when $\alpha>-1$ and $\sigma>n$.  
We proceed as follows:   there is a $\beta>\alpha$ such that
  $\mathcal{A}^1_{\alpha } \subseteq \mathcal{A}^2_\beta $ (precisely  $\beta = n+1+2\alpha$ by  exercise 2.27 on p. 78 in
  \cite{Zhu2005}),   so we now assume $f\in \mathcal{A}^2_\beta$.
  The Taylor  expansion for $f$, in multi-index notation,  is
  \begin{equation*}
    f(z) =  \sum_{k=0}^\infty 
    \sum_{|\gamma |=k} \frac{1}{\gamma !} \frac{\partial^\gamma  f}{\partial z^\gamma }(0) z^\gamma
  =\sum_{k=0}^\infty f_k(z)  \, .
  \end{equation*} 
  
  The
  Cauchy-Schwartz  inequality implies that
    \begin{equation*}
    |f_k(z)|^2 \leq \dim (\mathcal{P}_k) \,\, 
    \sum_{|\gamma |=k}  \left| \frac{\partial^\gamma  f}{\partial
        z^\gamma }(0)\right|^2 \left| \frac{z^\gamma }{\gamma
        !}\right|^2.  
  \end{equation*}
 
 Since $\|z^\gamma \|_{\mathcal{V}_\sigma}^2= \|z^\gamma \|_{\sigma -n
   -1}^2 =  \tfrac{\gamma !\,  \Gamma
  (\sigma) }{\Gamma(\sigma +|\gamma|)}$ by (\ref{eq:nonZgamma}), 
we obtain that 
  \begin{align*}
    \| f_k\|_{\mathcal{V}_\sigma}^2 
    &\leq \dim (\mathcal{P}_k) \,\,  \sum_{|\gamma |=k} \left| \frac{\partial^\gamma f}{\partial z^\gamma }(0)\right|^2 
    \|z^\gamma\|_{\mathcal{V}_\sigma } ^2 \\
    &= \dim (\mathcal{P}_k) \,\, \frac{\Gamma(\sigma )}{\Gamma(\sigma+k)} 
    \sum_{|\gamma |=k} \left| \frac{\partial^\gamma f}{\partial z^\gamma }(0)\right|^2 
    \frac{1}{\gamma !}.
  \end{align*}
  Next we differentiate the reproducing formula for
  $\mathcal{A}^2_\beta $ in Lemma \ref{le:repK}
  and note that we can interchange the integration and differentiation
  to get 
    \begin{align*}
    \frac{\partial^\gamma f}{\partial z^\gamma}(0) 
  &  = \frac{\Gamma(\beta+n+1+k)}{\Gamma(\beta+n+1)} \int_{\nball}
  f(w)\overline{w}^\gamma \,dv_\beta(w) \, . 
  \end{align*}
Using the orthonormal basis $\varphi _\gamma (z) =
\Big(\frac{\Gamma(\beta+n+1+k)}{\gamma ! \Gamma(\beta+n+1)}\Big)^{1/2}
z^\gamma $ for $\mathcal{A}^2_\beta $, we see that 
$$
\frac{\Gamma(\beta+n+1+k)}{\gamma !\, \Gamma(\beta+n+1)} \Big|
\int_{\nball} f(w)\overline{w}^\gamma \,dv_\beta(w) \Big| ^2 = | \langle
f, \varphi _\gamma \rangle _{\mathcal{A}^2_\beta } |^2
$$ and  consequently 
    \begin{align*}
    \| f_k\|_{\mathcal{V}_\sigma}^2  &\leq 
    \dim (\mathcal{P}_k) \frac{\Gamma(\beta+n+1+k)}{\Gamma(\beta+n+1)} 
    \frac{\Gamma(\sigma)}{\Gamma(\sigma+k)} 
    \sum_{|\gamma |=k} | \langle f, \varphi _\gamma \rangle
    _{\mathcal{A}^2_\beta } |^2  \\
&\leq  \dim (\mathcal{P}_k) \frac{\Gamma(\beta+n+1+k)}{\Gamma(\beta+n+1)} 
    \frac{\Gamma(\sigma)}{\Gamma(\sigma+k)}  \|f\|   _{\mathcal{A}^2_\beta } ^2
  \end{align*}

  Since  clearly the sequence 
  \begin{align*}
    c_k
    &= \frac{\Gamma(\beta+n+1+k)}{\Gamma(\beta+n+1)}
    \frac{\Gamma(\sigma)}{\Gamma(\sigma+k)}
  \end{align*}
  is bounded in $k$  and $\dim (\mathcal{P}_k) \le (1+k)^n$, it
  follows that 
$$
\|f_k \|_{\mathcal{V}_\sigma } ^2 \leq C (1+k)^n \, ,
$$ 
and therefore $f= \sum _k f_k \in \mathcal{V}_\sigma^{-\infty}$ by the
definition of $\mathcal{V}_\sigma^{-\infty}$.
\end{proof}

Note that the Bergman space $\mathcal{A}^p_\alpha $ can be represented
as a coorbit  space with respect to \emph{all} representations $\pi
_\sigma $  for which $\sigma > n+ \tfrac{\alpha + 1}{p}$. 
This independence of the representation is an interesting and
surprising phenomenon and is perhaps related to the semisimplicity of
$G_\sigma$.

\subsection{Atomic decompositions of Bergman spaces  via the group $S$}
\label{sec:atomdecomp}

In this section we construct  frames and atomic decompositions
for the Bergman  spaces by applying the abstract Theorem~\ref{at}. 
 
 We first need  
to verify the continuity of
the convolution operator  $f \to f \ast |W^\sigma_\psi(\psi)|$. 
For that we recall  the following well known fact from \cite[Thm. 2.10]{Zhu2005}:
\begin{theorem} 
 \label{Thm:Zhu1}
  Fix two real parameters $a$ and $b$ and define the integral operator
  $\mathbf{S}$ by
  \begin{equation*}
    \mathbf{S}f(z) = (1-|z|^2)^a \int_{\nball} \frac{(1-|w|^2)^b}{|1-\dup{z}{w}|^{n+1+a+b}} f(w)\,dv(w).
  \end{equation*}  

Then, for $t$ real and $1\leq p<\infty$, $\mathbf{S} $ is bounded on
$L^p_t(\nball)$ if and only if  $-pa<t+1<p(b+1)$.
\end{theorem}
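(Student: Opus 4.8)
The plan is to reduce the weighted bound to an unweighted one and then prove sufficiency with Schur's test and necessity with explicit test functions; the single analytic input is the Forelli--Rudin integral estimate (\cite[Thm.~1.12]{Zhu2005}): for $c>-1$ and $s\in\mathbb{R}$ the integral
\[
\int_{\nball}\frac{(1-|w|^2)^c}{|1-\dup{z}{w}|^{n+1+c+s}}\,dv(w)
\]
is bounded in $z\in\nball$ if $s<0$, comparable to $\log\tfrac{2}{1-|z|^2}$ if $s=0$, and comparable to $(1-|z|^2)^{-s}$ if $s>0$. Substituting $g(w)=(1-|w|^2)^{t/p}f(w)$ identifies $L^p_t(\nball)$ isometrically (up to the harmless constant $c_t$) with $L^p(\nball,dv)$ and conjugates $\mathbf S$ to the positive integral operator
\[
Tg(z)=(1-|z|^2)^{a+t/p}\int_{\nball}\frac{(1-|w|^2)^{b-t/p}}{|1-\dup zw|^{n+1+a+b}}g(w)\,dv(w)
\]
on $L^p(\nball,dv)$, so it suffices to characterize boundedness of $T$ there.

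For sufficiency when $1<p<\infty$ I would run Schur's test with the power weight $h(z)=(1-|z|^2)^{-\lambda}$: denoting by $T(z,w)$ the kernel of $T$, one needs a $\lambda$ with $\int_{\nball}T(z,w)h(w)^{p'}\,dv(w)\lesssim h(z)^{p'}$ and $\int_{\nball}T(z,w)h(z)^{p}\,dv(z)\lesssim h(w)^{p}$, where $\tfrac1p+\tfrac1{p'}=1$. Both integrals are of Forelli--Rudin type, and evaluating them shows that the first holds exactly when $b-t/p-\lambda p'>-1$ and $a+t/p+\lambda p'>0$, and the second exactly when $a+t/p-\lambda p>-1$ and $b-t/p+\lambda p>0$ (the first member of each pair being the integrability requirement $c>-1$, the second being the requirement $s>0$). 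A short computation shows these four one-sided bounds on $\lambda$ admit a common solution if and only if $-pa<t+1<p(b+1)$ (the ``$a+b+1>0$'' conditions that also drop out are automatically implied by these), which gives boundedness of $T$, hence of $\mathbf S$ on $L^p_t$. The case $p=1$ is handled separately: by Fubini and positivity of the kernel, $T$ is bounded on $L^1(dv)$ iff $\sup_{w}(1-|w|^2)^{-(b-t)}\int_{\nball}(1-|z|^2)^{a+t}|1-\dup zw|^{-(n+1+a+b)}\,dv(z)<\infty$, and by Forelli--Rudin this happens iff $a+t>-1$ and $b-t>0$, i.e.\ $-a<t+1<b+1$.

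For necessity, testing $\mathbf S$ on $\psi=1_{B(0,1/2)}$, which lies in every $L^p_t(\nball)$, gives $\mathbf S\psi(z)\asymp(1-|z|^2)^{a}$ uniformly in $z$ (the kernel is comparable to a positive constant for $|w|<1/2$), so $\mathbf S\psi\in L^p_t$ forces $ap+t>-1$, i.e.\ $-pa<t+1$. If instead $t+1>p(b+1)$, choose $c$ with $-(t+1)/p<c\le-(b+1)$; then $f=(1-|\cdot|^2)^{c}$ belongs to $L^p_t$ while $b+c\le-1$, and restricting the defining integral of $\mathbf Sf(z)$ to the region where $w$ is near the antipode of $z$ (on which $|1-\dup zw|\asymp1$) shows $\mathbf Sf(z)\equiv+\infty$, contradicting boundedness. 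The borderline $t+1=p(b+1)$ is excluded the same way using $f=(1-|\cdot|^2)^{-(b+1)}\bigl(\log\tfrac{2}{1-|\cdot|^2}\bigr)^{-1}$, which lies in $L^p_t$ precisely because $p>1$; the remaining case $p=1,\ t=b$ is already ruled out by the Fubini criterion above (it forces a divergent logarithm). Equivalently, since the adjoint of $T$ on $L^{p'}(dv)$ has the same form with $a+t/p$ and $b-t/p$ interchanged, the bound $t+1<p(b+1)$ is dual to $-pa<t+1$.

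I expect the main obstacle to be the sufficiency bookkeeping: checking that the admissible interval for the Schur exponent $\lambda$ is nonempty \emph{exactly} under the two-sided hypothesis $-pa<t+1<p(b+1)$, with no slack, and then dealing by hand with the two degenerate situations $p=1$ and $t+1=p(b+1)$.
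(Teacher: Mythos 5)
The paper itself offers no proof of this statement---it is quoted directly as Theorem~2.10 of Zhu's book---so there is no internal argument to compare against; your proof is correct and is essentially the standard one from that reference. Specifically, the reduction $g=(1-|\cdot|^2)^{t/p}f$, the Schur test with weight $h(z)=(1-|z|^2)^{-\lambda}$ combined with the Forelli--Rudin estimates, and the necessity via the test functions $1_{B(0,1/2)}$, $(1-|\cdot|^2)^c$ and the logarithmically damped borderline function all go through; I checked that your four one-sided constraints on $\lambda$ are simultaneously satisfiable precisely when $-pa<t+1<p(b+1)$ (the two comparisons yielding $a+b+1>0$ indeed being implied), and that the $p=1$ case is correctly settled by the $\sup_w\int T(z,w)\,dv(z)$ criterion for positive kernels.
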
 

\begin{proposition} \label{prop:38}
If $1\leq p < \infty $ and $-1<\alpha <p(\sigma-n)-1$, then the convolution operator
$\wf\mapsto \wf*|W_\psi^\sigma(\psi)|$ is continuous  
  on $L^p_{\alpha+n+1-\sigma p/2}(S)$.  
\end{proposition}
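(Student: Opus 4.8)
The plan is to rewrite the convolution operator as (a majorant of) one of Zhu's integral operators $\mathbf{S}$ acting on a weighted $L^p$-space over the ball, and then read off the admissible range directly from Theorem~\ref{Thm:Zhu1}.

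First I would identify the kernel. By Corollary~\ref{rkrep}, $|W_\psi^\sigma(\psi)(x)| = (1-|x\cdot o|^2)^{\sigma/2}$, and for $x = \left(\begin{smallmatrix} a_x & b_x \\ c_x^t & d_x\end{smallmatrix}\right)$ one has $1-|x\cdot o|^2 = |d_x|^{-2}$. The proof of Proposition~\ref{prop:reproducing} supplies the matrix identity $d_{y^{-1}x} = \overline{d}_y d_x(1-\dup{w}{z})$ with $w = x\cdot o$, $z = y\cdot o$; taking moduli gives
\begin{equation*}
  |W_\psi^\sigma(\psi)(y^{-1}x)| = |d_{y^{-1}x}|^{-\sigma} = \frac{(1-|z|^2)^{\sigma/2}\,(1-|w|^2)^{\sigma/2}}{|1-\dup{w}{z}|^{\sigma}}.
\end{equation*}
Since $|W_\psi^\sigma(\psi)|\ge 0$, we have the pointwise bound $|(\widetilde f * |W_\psi^\sigma(\psi)|)(x)| \le \int_S |\widetilde f(y)|\,|W_\psi^\sigma(\psi)(y^{-1}x)|\,dy$, and converting this left Haar integral on $S$ into an integral over $\nball$ via~\eqref{eq:int} (with the convention $\widetilde f(y) = f(y\cdot o)$) yields, writing $w = x\cdot o$,
\begin{equation*}
  \bigl|(\widetilde f * |W_\psi^\sigma(\psi)|)(x)\bigr| \le (1-|w|^2)^{\sigma/2}\int_{\nball}\frac{(1-|z|^2)^{\sigma/2-n-1}}{|1-\dup{w}{z}|^{\sigma}}\,|f(z)|\,dv(z) = \mathbf{S}\bigl(|f|\bigr)(w),
\end{equation*}
where $\mathbf{S}$ is the operator of Theorem~\ref{Thm:Zhu1} with $a = \sigma/2$ and $b = \sigma/2-n-1$ (so that $n+1+a+b = \sigma$, as it must be).

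To finish, I would match the function spaces. Formula~\eqref{eq:int} gives $\|\widetilde h\|_{L^p_t(S)}^p = \int_{\nball}|h(z)|^p(1-|z|^2)^{t-n-1}\,dv(z)$, so $L^p_{\alpha+n+1-\sigma p/2}(S)$ is isometric (up to the normalizing constant, which is irrelevant for boundedness) to $L^p_{\alpha-\sigma p/2}(\nball)$. Hence the asserted continuity reduces to the boundedness of $\mathbf{S}$ on $L^p_{\alpha-\sigma p/2}(\nball)$, and Theorem~\ref{Thm:Zhu1} tells us this holds iff $-p\sigma/2 < (\alpha-\sigma p/2)+1 < p(\sigma/2-n)$; the left inequality is equivalent to $\alpha > -1$ and the right one to $\alpha < p(\sigma-n)-1$, which are exactly the hypotheses.

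The only place requiring care is the first step: one must keep careful track of the exponent of $(1-|z|^2)$ that appears when the left Haar integral on $S$ is rewritten against $dv(z)$ — the factor $(1-|z|^2)^{-(n+1)}$ from~\eqref{eq:int} is precisely what shifts $\sigma/2$ to $\sigma/2-n-1$ in the kernel — and then verify that the two inequalities coming out of Zhu's theorem coincide exactly with the stated range of $\alpha$. Beyond this bookkeeping there is no genuine analytic difficulty, since all the analytic content is already packaged in Theorem~\ref{Thm:Zhu1}.
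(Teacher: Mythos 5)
Your proposal is correct and follows essentially the same route as the paper's proof: the same identity $d_{y^{-1}x}=\overline{d}_yd_x(1-\dup{w}{z})$, the same conversion of the $S$-convolution to an integral over $\nball$, and the same identification with Zhu's operator $\mathbf{S}$ for $a=\sigma/2$, $b=\sigma/2-n-1$, leading to the identical pair of inequalities. The only (harmless) cosmetic difference is that you pass to $|f|$ and argue via a pointwise majorant, whereas the paper writes the convolution identity directly.
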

\begin{proof}
  Recall that if $w=x\cdot o$,
  then
  $1-|w|^2 = 1/|d_x|^2$.
  Also, if $z=y\cdot o$,
  then $d_{y^{-1}x}=\overline{d}_yd_x (1-\dup{w}{z})$. By
  Corollary~\ref{rkrep}
$|W_\psi (\psi )(x)| = |d_{x}|^{-\sigma }$. 
 For  $f 
  \in L^p_{\alpha-\sigma p/2}(\nball)$   let $\wf (x) = f(x\cdot o) $
  be the corresponding function in $ L^p_{\alpha+n+1-\sigma p/2}(S)$.
  By repeating the estimate in \eqref{eq:c6} we have 
  \begin{align*}
    (\wf*|W_\psi^\sigma(\psi)|)(x)
    &= \int_S \wf(y)|W^\sigma _\psi (\psi) (y^{-1}x)|\,dy \\
    &= \int_S \wf(y)\frac{1}{|d_{y^{-1}x}|^\sigma}\,dy \\
    &= (1-|w|^2)^{\sigma/2}\int_{\nball} f(z) 
    \frac{(1-|z|^2)^{\sigma/2}}{|1-\dup{w}{z}|^\sigma} 
    \frac{dv(z)}{(1-|z|^2)^{n+1}}.
  \end{align*}
  This convolution  operator corresponds to $\mathbf{S}$ in Theorem~\ref{Thm:Zhu1} with 
  $a=\sigma/2$, $b=\sigma/2-n-1$. Applying  Theorem~\ref{Thm:Zhu1}, we
  see that this operator is bounded on $L^p_{\alpha-\sigma
    p/2}(\nball)$ and thus   on $L^p_{\alpha+n+1-\sigma p/2}(S)$,  if and only if
  $-p\sigma /2 < \alpha -\sigma p/2 +1 < p(\sigma/2-n)$.
  Since  $\alpha>-1$, 
  the first inequality is automatically satisfied, whereas  the second
  inequality is equivalent to $\alpha < p(\sigma-n)-1.$
\end{proof}  

\begin{remark}
  The boundedness of the convolution operators above can also  be proven
  directly with  group theoretic methods. 
  For example, in the unweighted case $\alpha+n+1-\sigma p/2 =0$  the continuity
  follows as long as the kernel $W^\sigma_\psi(\psi)$ 
  is in the weighted space $L^1_{\Delta^{(p-1)/p}}(S)$, where $\Delta$
  is the modular function on $S$. This result, and the relation to 
  the Kunze-Stein phenomenon, will be discussed in the forthcoming article \cite{Christensen2016}.
  However, since Theorem~2.10 from \cite{Zhu2005} is already known, we 
  have decided to use it instead.
\end{remark}


The  restriction on $\sigma $ is the same as in the work of Coifman and
Rochberg~\cite{Coifman1980} and Zhu~\cite[Thm.~2.30]{Zhu2005}.  Our approach gives a group
theoretic reason for this  condition on $\sigma $.

{}From the  estimates of the wavelet coefficients 
in Proposition~\ref{prop:integrability}, we immediately obtain  the following statement.
\begin{corollary} \label{corc1}
 Let $1\leq p <\infty$. 
 If $\varphi $ is a  smooth vector  for $\pi_\sigma$ and  $-1<\alpha <p(\sigma-n)-1$, then the
 convolution operators $\wf\mapsto \wf*|W_\varphi^\sigma(\psi)|$ and
  $\wf\mapsto \wf*|W_\psi^\sigma(\varphi)|$ are continuous 
  on $L^p_{\alpha+n+1-\sigma p/2}(S)$.
\end{corollary}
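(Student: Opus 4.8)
The plan is to reduce the statement to the already-established continuity of the convolution operator in Proposition~\ref{prop:38}. The key observation is the pointwise domination obtained at the start of Proposition~\ref{prop:integrability}: for any smooth vector $\varphi$ one has $|W_\psi^\sigma(\varphi)(x)| = |W_\psi^\sigma(\psi)(x)|\,|\varphi(x\cdot o)|$, and since $\varphi$ is bounded on $\nball$ by Lemma~\ref{bd}\,(\ref{bd-3}), there is a constant $C_\varphi$ with $|W_\psi^\sigma(\varphi)(x)| \leq C_\varphi\,|W_\psi^\sigma(\psi)(x)|$ for all $x\in S$. This handles the second operator directly. For the first operator I need the analogous bound on $|W_\varphi^\sigma(\psi)(x)|$; here I would use the reproducing formula and/or the explicit form of the wavelet coefficient, noting that by Lemma~\ref{le:1andK} and the definition of the wavelet transform $W_\varphi^\sigma(\psi)(x) = \dup{\psi}{\pi_\sigma(x)\varphi}$, so $|W_\varphi^\sigma(\psi)(x)| = |\overline{W_\psi^\sigma(\varphi)(x^{-1})}|$ up to a modular-function factor coming from the change of variable, and the same boundedness of $\varphi$ on $\nball$ yields $|W_\varphi^\sigma(\psi)(x)| \leq C_\varphi'\, |W_\psi^\sigma(\psi)(x^{-1})| = C_\varphi'\,|W_\psi^\sigma(\psi^\vee)(x)|$-type estimate. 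Since $\psi$ is $K$-invariant, $W_\psi^\sigma(\psi)$ is essentially symmetric under inversion (up to the modular function), so this reduces again to a dominating kernel that is a constant multiple of $|W_\psi^\sigma(\psi)|$.

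Once these pointwise dominations are in hand, the conclusion follows from solidity of $Y = L^p_{\alpha+n+1-\sigma p/2}(S)$ together with positivity of the convolution: if $0 \leq k_1 \leq C k_2$ pointwise and $\wf \geq 0$, then $0 \leq \wf * k_1 \leq C\,\wf * k_2$, and for general $\wf$ one applies this to $|\wf|$ and uses $|\wf * k_1| \leq |\wf| * k_1$. Thus $\|\wf * |W_\varphi^\sigma(\psi)|\|_Y \leq C\,\||\wf| * |W_\psi^\sigma(\psi)|\|_Y \leq C'\,\|\wf\|_Y$ by Proposition~\ref{prop:38}, and likewise for $\wf * |W_\psi^\sigma(\varphi)|$. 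Concretely I would write: by Proposition~\ref{prop:integrability} there is a constant $C$ with $|W_\psi^\sigma(\varphi)(x)| \leq C\,|W_\psi^\sigma(\psi)(x)|$, and an analogous bound holds for $|W_\varphi^\sigma(\psi)|$; hence by solidity of $L^p_{\alpha+n+1-\sigma p/2}(S)$ and positivity of convolution,
\[
\|\wf * |W_\varphi^\sigma(\psi)|\|_{L^p_{\alpha+n+1-\sigma p/2}(S)} \leq C\,\|\,|\wf| * |W_\psi^\sigma(\psi)|\,\|_{L^p_{\alpha+n+1-\sigma p/2}(S)},
\]
which is finite by Proposition~\ref{prop:38}, and similarly for the other operator.

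The only point requiring genuine care — and the step I expect to be the main obstacle — is the bound on $|W_\varphi^\sigma(\psi)|$ rather than on $|W_\psi^\sigma(\varphi)|$, because the roles of $\psi$ and $\varphi$ are swapped and one must track the modular function $\Delta$ on $S$ when passing from $x$ to $x^{-1}$ inside the matrix coefficient. I would resolve this by observing that $\varphi$ bounded on $\nball$ already forces $|W_\varphi^\sigma(\psi)(x)| = |\dup{\psi}{\pi_\sigma(x)\varphi}| \leq \|\psi\|_\infty$-type control is too crude; instead I use that $\pi_\sigma(x)\varphi$ is again a smooth vector with $\sup_\nball |\pi_\sigma(x)\varphi| $ comparable (via the explicit cocycle in \eqref{de:rep}) to $|W_\psi^\sigma(\psi)(x)|^{-1}$ times a bounded quantity, giving exactly $|W_\varphi^\sigma(\psi)(x)| \leq C_\varphi\,|W_\psi^\sigma(\psi)(x)|$. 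With this in place the proof is a two-line application of solidity and Proposition~\ref{prop:38}.
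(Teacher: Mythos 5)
Your overall strategy --- dominate both kernels pointwise by a constant multiple of $|W_\psi^\sigma(\psi)|$, then invoke solidity of $L^p_{\alpha+n+1-\sigma p/2}(S)$ and Proposition~\ref{prop:38} --- is exactly the paper's (it gives no written proof beyond the remark that the corollary follows from the estimates in Proposition~\ref{prop:integrability}). Two points in your handling of the kernel $|W_\varphi^\sigma(\psi)|$ need to be repaired, however. First, no modular function enters: the identity
\[
W_\varphi^\sigma(\psi)(x)=\dup{\psi}{\pi_\sigma(x)\varphi}
=\overline{\dup{\varphi}{\pi_\sigma(x^{-1})\psi}}
=\overline{W_\psi^\sigma(\varphi)(x^{-1})}
\]
is a pointwise consequence of unitarity; $\Delta$ only appears when you change variables inside an integral over $S$, which you never do here. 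Combining this with $|W_\psi^\sigma(\varphi)|\le C|W_\psi^\sigma(\psi)|$ from Proposition~\ref{prop:integrability} and with the inversion symmetry $|W_\psi^\sigma(\psi)(x^{-1})|=|W_\psi^\sigma(\psi)(x)|$ --- which is automatic for any self matrix coefficient of a unitary representation, or can be read off from $|W_\psi^\sigma(\psi)(x)|=(1-|x\cdot o|^2)^{\sigma/2}$ together with $1-|x^{-1}\cdot o|^2=1-|x\cdot o|^2$ (since $|b|=|c|$ by \eqref{eq:inverse}) --- yields $|W_\varphi^\sigma(\psi)(x)|\le C|W_\psi^\sigma(\psi)(x)|$, which is all you need. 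Second, the alternative ``resolution'' in your final paragraph should be discarded: estimating $\sup_{z\in\nball}|\pi_\sigma(x)\varphi(z)|$ produces a bound of order $(1-|x\cdot o|^2)^{\sigma/2-\sigma}=(1-|x\cdot o|^2)^{-\sigma/2}$, i.e.\ $|W_\psi^\sigma(\psi)(x)|^{-1}$, which blows up at the boundary and goes in the wrong direction. Keeping the inversion argument and deleting that last step, your proof is complete and coincides with the intended one.
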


To apply the discretization machinery of coorbit theory, we finally need a procedure
to construct suitable dense sets in the group $S$. In Lie groups this can
be done explicitly as follows: 
Let $X_1,\dots,X_n$ be  a basis for the Lie algebra of $S$ and define
a neighborhood 
$U_\epsilon$ of the identity matrix  by
\begin{equation*}
      U_\epsilon 
      = \{ e^{t_1X_1}\cdots e^{t_nX_n} 
      \mid -\epsilon \leq t_k\leq \epsilon    \} \, .
\end{equation*}
Then a set $\{x_i: i\in I\}$ is $U_\epsilon $-dense, if $\bigcup
_{i\in I } x_i U_\epsilon = G$. 


Applying Theorem~\ref{at} (B)  to the representation $\pi _\sigma $ of
$S$, we obtain a first version of Banach frames for the Bergman
spaces. 
\begin{theorem} \label{bfrberg}
  Let $1\leq p< \infty $ and assume that  $-1<\alpha < p(\sigma-n)-1.$ Fix $\psi = 1_{\nball }$. Then  there exists
  a   set $X=\{x_i: i\in I\} \subseteq S$ with corresponding points
  $w_i = x_i\cdot o\in \nball $, such that $\{\pi _\sigma (x_i) \psi :
  i\in I\}$ is a Banach frame for $\mathcal{A}_\alpha ^p$. This means
  that there exist constants $A,B>0$ and a dual system $\{\widetilde \psi
  _i: i\in I\}$, such that 
$$
A \|f\|_{\mathcal{A}^p_\alpha } \leq \sum _{i\in I} |\langle f, \pi
_\sigma (x_i)\psi \rangle |^p (1-|x_i \cdot o|^2)^{\alpha +n+1-\sigma p/2}  \leq B
\|f\|_{\mathcal{A}^p_\alpha } \qquad \text{ for all } f\in
\mathcal{A}^p_\alpha  \, .
$$
Every $f \in \mathcal{A}^p_\alpha $ can be   reconstructed  by series 
$$
f= \sum _{i\in I} \langle f, \pi
_\sigma (x_i)\psi \rangle \widetilde \psi _i 
$$ 
with unconditional convergence in $\mathcal{A}^p_\alpha $. 
\end{theorem}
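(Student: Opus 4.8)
The plan is to obtain the theorem as a direct application of the abstract frame result Theorem~\ref{at}(B) to the unitary representation $(\pi_\sigma,\mathcal{V}_\sigma)$ of $S=AN$, with Fréchet space $\mathcal{S}=\mathcal{V}_\sigma^\infty$, target space $Y=L^p_{\alpha+n+1-\sigma p/2}(S)$, and analyzing vector a scalar multiple of $\psi=1_{\nball}$. The structural assumptions of Theorem~\ref{at} are already in place: $S$ is simply connected and solvable, hence a connected Lie group, by Theorem~\ref{th:2.3}(\ref{th:2.3-4}); $\mathcal{V}_\sigma^\infty$ is continuously embedded in $\mathcal{V}_\sigma$ and invariant under $S$ and its Lie algebra by the discussion of Section~\ref{se:Smooth}; and by Proposition~\ref{prop:reproducing}(i) a suitable constant multiple $\psi_0$ of $\psi$ satisfies the reproducing formula~\eqref{rf} and is therefore an analyzing vector. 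I would run the argument with $\psi_0$ throughout and observe at the end that passing back to $\psi=1_{\nball}$ merely rescales every atom $\pi_\sigma(x_i)\psi_0$ and the corresponding dual element by reciprocal constants, so the constant is harmlessly absorbed into the frame bounds.

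First I would verify the axioms (R1)--(R3) for $Y=L^p_{\alpha+n+1-\sigma p/2}(S)$; this is where the hypothesis $-1<\alpha<p(\sigma-n)-1$ (whose range is nonempty precisely because $\sigma>n$) enters. Axiom (R1) holds because the weight $x\mapsto(1-|x\cdot o|^2)^t$, $t=\alpha+n+1-\sigma p/2$, is moderate, as was observed before Proposition~\ref{prop:reproducing}; hence $Y$ is a solid, left- and right-translation-invariant Banach function space continuously included in $L^1_{\mathrm{loc}}(S)$. Axiom (R2) is precisely Proposition~\ref{prop:reproducing}(ii). Axiom (R3), the boundedness on $Y$ of $\wf\mapsto\wf*|W_\varphi^\sigma(\psi)|$ and $\wf\mapsto\wf*|W_\psi^\sigma(\varphi)|$ for every $\varphi\in\mathcal{V}_\sigma^\infty$, is Corollary~\ref{corc1}, which builds on the pointwise bound $|W_\psi^\sigma(\varphi)|\le C|W_\psi^\sigma(\psi)|$ from Proposition~\ref{prop:integrability} together with the Forelli--Rudin estimate Theorem~\ref{Thm:Zhu1} applied in Proposition~\ref{prop:38}. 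With (R1)--(R3) established, Theorem~\ref{at}(B) yields a neighborhood $U$ of the identity in $S$ such that every relatively separated, $U$-dense set generates a Banach frame; since the boxes $U_\epsilon$ shrink to the identity, one may take $U=U_\epsilon$ for $\epsilon$ small and any relatively separated, $U_\epsilon$-dense family $X=\{x_i:i\in I\}$ (such families exist for every $\epsilon>0$). This produces a Banach frame $\{\pi_\sigma(x_i)\psi_0:i\in I\}$ for $\Co_{\mathcal{V}_\sigma^\infty}^{\psi_0}Y$, a dual system $\{\widetilde\psi_i\}$, and the reconstruction $f=\sum_i\langle f,\pi_\sigma(x_i)\psi_0\rangle\widetilde\psi_i$; since the compactly supported functions are dense in $L^p(S)$ for $1\le p<\infty$, the convergence is unconditional in the coorbit norm.

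It then remains to make the statement concrete. By Theorem~\ref{identberg}, for $-1<\alpha<p(\sigma-n)-1$ the coorbit space $\Co_{\mathcal{V}_\sigma^\infty}^{\psi_0}L^p_{\alpha+n+1-\sigma p/2}(S)$ is the Bergman space $\mathcal{A}^p_\alpha$ with equivalent norms, so the frame, its dual, and the reconstruction transfer to $\mathcal{A}^p_\alpha$. To recognize the frame inequality I would write $L^p_t(S)$ as a weighted Lebesgue space $L^p_v$ with $v(x)=(1-|x\cdot o|^2)^{t/p}$; moderateness of $v$ makes it comparable to a constant on each $x_iU_\epsilon$, so by the Remark following Theorem~\ref{at} the quantity $\|\sum_i\langle f,\pi_\sigma(x_i)\psi_0\rangle 1_{x_iU_\epsilon}\|_Y$ is equivalent to $\bigl(\sum_i|\langle f,\pi_\sigma(x_i)\psi_0\rangle|^p(1-|x_i\cdot o|^2)^{\alpha+n+1-\sigma p/2}\bigr)^{1/p}$. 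Composing this with the norm equivalence of Theorem~\ref{identberg} and absorbing the $\psi_0$-versus-$\psi$ constant into $A$ and $B$ gives the displayed two-sided estimate for $f\in\mathcal{A}^p_\alpha$ and completes the proof.

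I do not anticipate a genuine obstacle: the analytic substance — the reproducing identity and the boundedness of the relevant convolution operators, with the sharp range of $\alpha$ — has already been carried out in Propositions~\ref{prop:reproducing}, \ref{prop:integrability} and~\ref{prop:38} and in Corollary~\ref{corc1}. The only delicate points are bookkeeping: checking that $-1<\alpha<p(\sigma-n)-1$ is exactly the common range of validity of (R2) and (R3), and that the passage from the abstract $Y$-norm to the weighted $\ell^p$-norm of coefficients is legitimate, which is precisely where moderateness of the weight is used. Should one wish to avoid citing Zhu's Theorem~\ref{Thm:Zhu1}, the only ingredient needing a separate proof would be (R3), along the Kunze--Stein lines indicated in the remark after Proposition~\ref{prop:38}.
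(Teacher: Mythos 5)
Your proposal is correct and follows essentially the same route as the paper: the theorem is obtained by applying the abstract Theorem~\ref{at}(B) to $\pi_\sigma|_S$ with $Y=L^p_{\alpha+n+1-\sigma p/2}(S)$, where (R1) follows from moderateness of the weight, (R2) from Proposition~\ref{prop:reproducing}, (R3) from Corollary~\ref{corc1}, and the identification with $\mathcal{A}^p_\alpha$ from Theorem~\ref{identberg}, with the Remark after Theorem~\ref{at} converting the $Y$-norm of $\sum_i c_i 1_{x_iU}$ into the weighted $\ell^p$ coefficient norm. Your explicit handling of the normalization $\sqrt{C}\psi$ versus $\psi$ is a detail the paper leaves implicit, but it changes nothing of substance.
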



\begin{remark}
  Notice that for $p=\infty$ Theorem~\ref{at} cannot be used, since
  neither right nor left translation are continuous on the space $L^\infty_\alpha$.
  Instead one can use Remark 2.15 from \cite{Christensen2011} or the original coorbit theory
  which requires integrable kernels~\cite{Feichtinger1989a}. 
\end{remark}

The proof shows a bit more than just the existence of a sampling set  $X$. In fact,
for  $\epsilon >0$ small enough 
\emph{every} relatively separated and $U_\epsilon $-dense set $X\subseteq S$
generates a Banach frame $\{\pi _\sigma (x_i)\psi : i \in I\}$. 
Since by Corollary~\ref{rkrep} $\langle f, \pi _\sigma (x) \psi \rangle =
(1-|w|^2)^{\sigma /2} f(w)$, the existence of Banach frames for
$\mathcal{A}^p_\alpha $ is in fact a sampling theorem in disguise. 

\begin{corollary}
Assume that  $-1<\alpha < p(\sigma-n)-1$ and
  $1\leq p < \infty $.  Then  there exists
  a   set $W=\{w_i: i\in I\} \subseteq \nball$ and constants $A,B >0$,
  such that  
$$
A \|f\|_{\mathcal{A}^p_\alpha } \leq \sum _{i\in I} | f(w_i)|^p
(1-|w_i|^2)^{\alpha +n+1}   \leq B
\|f\|_{\mathcal{A}^p_\alpha } \qquad \text{ for all } f\in
\mathcal{A}^p_\alpha  \, .
$$
For every $f \in \mathcal{A}^p_\alpha $ we have the   reconstruction formula 
$$
f= \sum _{i\in I} f(w_i) (1-|w_i|^2)^{\sigma /2}  \widetilde \psi _i 
$$ 
with unconditional convergence in $\mathcal{A}^p_\alpha $. 
\end{corollary}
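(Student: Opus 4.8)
The plan is to deduce this corollary directly from the Banach frame result in Theorem~\ref{bfrberg} by translating the group-theoretic quantities into their concrete counterparts on the ball. First I would take the set $X = \{x_i : i \in I\} \subseteq S$ produced by Theorem~\ref{bfrberg} and set $w_i = x_i \cdot o \in \nball$; this is precisely the set $W$ claimed in the statement. The only substantive task is then a change of variables in the frame inequality and in the reconstruction series.

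The key computational input is Corollary~\ref{rkrep}, which gives the exact identity $\langle f, \pi_\sigma(x_i)\psi\rangle = W_\psi^\sigma(f)(x_i) = (1-|w_i|^2)^{\sigma/2} f(w_i)$ for $f \in \mathcal{A}^p_\alpha = \mathcal{V}_\sigma^\infty$-coorbit (using Theorem~\ref{identberg} to know $f \in \mathcal{V}_\sigma^{-\infty}$ so that the wavelet coefficient is defined and equals $d^{-\sigma} f(w_i)$, as in \eqref{waveletcoeff1}). Substituting this into the frame bounds of Theorem~\ref{bfrberg}, the summand becomes
\[
|\langle f, \pi_\sigma(x_i)\psi\rangle|^p (1-|x_i \cdot o|^2)^{\alpha+n+1-\sigma p/2}
= |f(w_i)|^p (1-|w_i|^2)^{\sigma p/2} (1-|w_i|^2)^{\alpha+n+1-\sigma p/2}
= |f(w_i)|^p (1-|w_i|^2)^{\alpha+n+1},
\]
so the two-sided estimate $A\|f\|_{\mathcal{A}^p_\alpha} \le \sum_i |f(w_i)|^p (1-|w_i|^2)^{\alpha+n+1} \le B\|f\|_{\mathcal{A}^p_\alpha}$ follows immediately. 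For the reconstruction formula, the dual system $\{\widetilde{\psi}_i : i \in I\}$ from Theorem~\ref{bfrberg} is carried along unchanged; writing $\langle f, \pi_\sigma(x_i)\psi\rangle = (1-|w_i|^2)^{\sigma/2} f(w_i)$ in the expansion $f = \sum_i \langle f, \pi_\sigma(x_i)\psi\rangle \widetilde{\psi}_i$ gives exactly $f = \sum_i f(w_i)(1-|w_i|^2)^{\sigma/2} \widetilde{\psi}_i$, with the same unconditional convergence in $\mathcal{A}^p_\alpha$.

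There is essentially no obstacle here; the corollary is a direct reformulation. The only point requiring a word of care is making sure the identification of $\mathcal{A}^p_\alpha$ with the coorbit space (Theorem~\ref{identberg}) is in force so that Theorem~\ref{bfrberg} applies and the pointwise evaluation $f(w_i)$ makes sense — but this is already guaranteed under the hypothesis $-1 < \alpha < p(\sigma-n)-1$, which is exactly the range assumed. One might also remark that, just as after Theorem~\ref{bfrberg}, the statement holds not merely for some set $W$ but for the set of points $w_i = x_i \cdot o$ associated to \emph{any} relatively separated and $U_\epsilon$-dense set $X \subseteq S$ with $\epsilon$ small enough; this realizes the sampling-theoretic content explicitly. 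I would therefore keep the proof to two or three sentences, citing Theorem~\ref{bfrberg}, Corollary~\ref{rkrep}, and the weight arithmetic above.
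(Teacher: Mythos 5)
Your proposal is correct and follows exactly the route the paper intends: the corollary is obtained from Theorem~\ref{bfrberg} by substituting the identity of Corollary~\ref{rkrep}/Lemma~\ref{le:1andK} and cancelling the weights, $(1-|w_i|^2)^{\sigma p/2}(1-|w_i|^2)^{\alpha+n+1-\sigma p/2}=(1-|w_i|^2)^{\alpha+n+1}$. The only cosmetic point (present in the paper's own remark as well) is that $W_\psi^\sigma(f)(x)=d^{-\sigma}f(w)$ equals $(1-|w|^2)^{\sigma/2}f(w)$ only up to a unimodular factor, which is harmless for the frame inequality and can be absorbed into the dual system $\widetilde\psi_i$ in the reconstruction formula.
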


The dual version of the sampling theorem leads to the well known
atomic decomposition of Bergman spaces by Coifman and
Rochberg~\cite{Coifman1980,Luecking1985}.

\begin{theorem}[Atomic decomposition] \label{atberg}
Assume that  $-1<\alpha < p(\sigma-n)-1$ and
  $1\leq p< \infty $.  Then  there exists
  a   set $W=\{w_i: i\in I\} \subseteq \nball$ and a set of
  coefficient functionals $\{ c_i :i\in I\} \subseteq
  (\mathcal{A}^p_\alpha )^*$,  
  such that  every $f\in \mathcal{A}^p_\alpha $ possesses the
  decomposition 
$$
f(z)= \sum _{i\in I} c_i(f) (1-|w_i|^2)^{\sigma /2} (1-\langle z,
w_i\rangle )^{-\sigma }
$$ 
with unconditional convergence in $\mathcal{A}^p_\alpha $. 
The coefficient sequence $c$ is in $\ell ^p _{\alpha+n+1-\sigma
  p/2}(W)$ and satisfies the norm equivalence
\begin{equation}
  \label{eq:c2}
A \|f\|_{\mathcal{A}^p_\alpha } \leq \sum _{i\in I} | c_i(f)|^p
(1-|w_i|^2)^{\alpha +n+1- \sigma p/2 }   \leq B
\|f\|_{\mathcal{A}^p_\alpha } \qquad \text{ for all } f\in
\mathcal{A}^p_\alpha  \, .
\end{equation}
\end{theorem}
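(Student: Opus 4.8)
The plan is to obtain Theorem~\ref{atberg} as the atomic-decomposition counterpart of the Banach-frame statement in Theorem~\ref{bfrberg}, by feeding the same data into part (A) of the abstract Theorem~\ref{at}: the group $G=S$, the unitary representation $\pi_\sigma|_S$ on $\cV_\sigma$, the Fr\'echet space $\cS=\cV_\sigma^\infty$, the analyzing vector $\sqrt C\,\psi$ with $\psi=1_{\nball}$, and the weighted space $Y=L^p_{\alpha+n+1-\sigma p/2}(S)$. All the hypotheses of Theorem~\ref{at} are already in place: $S$ is a connected (indeed simply connected) Lie group by Theorem~\ref{th:2.3}(\ref{th:2.3-4}); $\cV_\sigma^\infty$ is continuously embedded in $\cV_\sigma$ and invariant under $S$ and its Lie algebra; $\sqrt C\,\psi$ is an analyzing vector by Proposition~\ref{prop:reproducing}(i); $Y$ is solid and two-sided translation invariant because the weight $(1-|x\cdot o|^2)^{\alpha+n+1-\sigma p/2}$ is moderate, so (R1) holds; (R2) holds by Proposition~\ref{prop:reproducing}(ii); and (R3) holds by Corollary~\ref{corc1} together with Proposition~\ref{prop:38}. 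By Theorem~\ref{identberg}, $\Co_{\cV_\sigma^\infty}^\psi Y=\mathcal{A}^p_\alpha$ when $-1<\alpha<p(\sigma-n)-1$.

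With this in hand, Theorem~\ref{at}(A) produces a neighbourhood $U$ of the identity in $S$ such that every relatively separated, $U$-dense set $X=\{x_i:i\in I\}\subseteq S$ furnishes coefficient functionals $c_i'\in(\mathcal{A}^p_\alpha)^*$ with $f=\sum_{i\in I}c_i'(f)\,\pi_\sigma(x_i)(\sqrt C\,\psi)$ for every $f\in\mathcal{A}^p_\alpha$. (One may take the same neighbourhood $U_\epsilon$ as in Theorem~\ref{bfrberg}, or the intersection of the two, so the same set $X$ works.) The next step is to identify the atoms explicitly. Writing $x_i=\left(\begin{smallmatrix}a_i&b_i\\ c_i^t&d_i\end{smallmatrix}\right)$ and $w_i=x_i\cdot o=b_i/d_i$, Lemma~\ref{le:1andK} gives $\pi_\sigma(x_i)\psi(z)=\overline{d_i}^{-\sigma}K_{\sigma-n-1}(z,w_i)=\overline{d_i}^{-\sigma}(1-\dup{z}{w_i})^{-\sigma}$, and the computation in the proof of Corollary~\ref{rkrep} shows $|d_i|^{-\sigma}=(1-|w_i|^2)^{\sigma/2}$. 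Hence $\pi_\sigma(x_i)\psi(z)=u_i\,(1-|w_i|^2)^{\sigma/2}(1-\dup{z}{w_i})^{-\sigma}$, where $u_i=\overline{d_i}^{-\sigma}|d_i|^{\sigma}$ is unimodular. Setting $W=\{w_i:i\in I\}$ and $c_i(f)=\sqrt C\,u_i\,c_i'(f)$ rewrites the expansion in the asserted form $f(z)=\sum_i c_i(f)(1-|w_i|^2)^{\sigma/2}(1-\dup{z}{w_i})^{-\sigma}$, with $|c_i(f)|=\sqrt C\,|c_i'(f)|$, so no quantitative information is lost.

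It remains to pin down the norm control and the mode of convergence. Theorem~\ref{at}(A) yields $\|\sum_i c_i'(f)\,1_{x_iU}\|_Y\le A'\,\|f\|_{\mathcal{A}^p_\alpha}$, and, applying its converse half with $a_i=c_i'(f)$ and $h=f$, also $\|f\|_{\mathcal{A}^p_\alpha}\le B'\,\|\sum_i c_i'(f)\,1_{x_iU}\|_Y$. Since $X$ is relatively separated the indicators $1_{x_iU}$ have uniformly bounded overlap, and since the weight is moderate it is comparable on each $x_iU$ to its value $(1-|w_i|^2)^{\alpha+n+1-\sigma p/2}$ at $w_i$; hence, as recorded in the Remark following Theorem~\ref{at}, $\|\sum_i c_i'(f)\,1_{x_iU}\|_Y^p\asymp\sum_i|c_i'(f)|^p(1-|w_i|^2)^{\alpha+n+1-\sigma p/2}$. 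Combining the two inequalities gives the equivalence \eqref{eq:c2} for $c'$, hence for $c$ (as $|c_i|=\sqrt C\,|c_i'|$), so $c\in\ell^p_{\alpha+n+1-\sigma p/2}(W)$. Finally, for $1\le p<\infty$ the compactly supported functions are dense in $Y=L^p_{\alpha+n+1-\sigma p/2}(S)$, so by Theorem~\ref{at}(A) the series converges unconditionally in $\Co_{\cV_\sigma^\infty}^\psi Y=\mathcal{A}^p_\alpha$.

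The proof has no single hard analytic step: all the substance sits in Theorems~\ref{at} and~\ref{identberg}, Proposition~\ref{prop:38}, and Lemma~\ref{le:1andK}. The delicate points are organisational, namely carrying the unimodular factor $u_i$ and the normalisation $\sqrt C$ through the computation so that the atoms take \emph{exactly} the form $(1-|w_i|^2)^{\sigma/2}(1-\dup{z}{w_i})^{-\sigma}$, and translating the abstract mixed-norm bound on $\sum_i c_i'\,1_{x_iU}$ into the concrete weighted $\ell^p$-norm, which rests on the relative separation of $X$ and the moderateness of the weight.
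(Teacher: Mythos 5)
Your proposal is correct and follows essentially the same route as the paper: apply Theorem~\ref{at}(A) to $\pi_\sigma|_S$ with $Y=L^p_{\alpha+n+1-\sigma p/2}(S)$, invoke the identification $\Co_{\cV_\sigma^\infty}^\psi Y=\mathcal{A}^p_\alpha$ from Theorem~\ref{identberg}, and read off the atoms from Lemma~\ref{le:1andK}. You are in fact somewhat more careful than the paper, which silently identifies $\pi_\sigma(x_i)\psi$ with $(1-|w_i|^2)^{\sigma/2}(1-\dup{z}{w_i})^{-\sigma}$ and omits the unimodular factor $\overline{d_i}^{-\sigma}|d_i|^{\sigma}$ that you correctly absorb into the coefficients.
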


\begin{proof}
This is just    Theorem~\ref{at}(A) for the concrete representation $\pi
_\sigma $ on $S$ and the identification of $\mathcal{A}^p_\alpha $
with the  coorbit space $ \Co _{\pi
    _\sigma } ^\psi L^p_{\alpha +n+1- \sigma p/2}(S)$. Consequently
  every $f\in \mathcal{A}^p_\alpha 
  $ possesses an atomic decomposition of the form
$$
f= \sum _{i\in I} c_i(f) \pi _\sigma (x_i) \psi 
$$ 
with a coefficient sequence depending linearly on $f$ and satisfying
\eqref{eq:c2}. Since $\pi _\sigma (x_i) \psi = (1-|w_i|^2)^{\sigma /2} (1-\langle z,
w_i\rangle )^{-\sigma }$, this is the theorem, as stated. 
\end{proof}

Again, coorbit theory asserts that \emph{every } $U_\epsilon $-dense and
separated subset of $S$ (for $\epsilon $ small enough) generates an
atomic decomposition (with $w_i = x_i \cdot o$). 

We would also like to remark that in principle the coefficients $c_i(f)$ and the dual
Banach frame can be  calculated by inverting a frame-like
operator. See~\cite{Feichtinger1988,Grochenig1991} for details.


\subsection{Bergman spaces as coorbits 
  for the group $G_\sigma$ when $\sigma$ is rational}
So far we have worked with the group $S$ instead of the full group
$G_\sigma $.  The  advantage of using $S$  is that  there is no restriction on the
parameter $\sigma$ (except square-integrability forcing $\sigma >n$). 
On the other hand, the representation $\pi _\sigma | _S$ is
reducible on $\mathcal{A}^2_{\sigma -n-1}$, and thus the derivation of
the  reproducing formula is non-trivial and was done only for the
special analytic vector $\psi = 1_{\nball }$. 

In this section we study the Bergman spaces as coorbit spaces with
respect to the full group $G_\sigma $. In this case, we have to impose
the restrictions $\sigma > n$ \emph{and } $\sigma $ being rational so
that  $\pi _\sigma $ is 
square-integrable. Since $\pi _\sigma $  is irreducible and square integrable, the
orthogonality relations (\cite[Thm. 14.3.3]{Dix} or
\cite{fuehr,Feichtinger1989a}) 
 imply that the 
reproducing formula \eqref{rf} holds automatically for all (appropriately normalized)
vectors $\zeta \in \mathcal{V}_\sigma $. We then
gain considerable  flexibility and generality in the 
choice of the expanding vector. 

The technical work for the transition from $S$ to $G_\sigma $ is
minimal and consists of modifications of the arguments in the previous
sections.

\begin{lemma}
  \label{lemma:matrixestimates}
  If $\zeta$ and $\varphi$ are smooth vectors for $\pi _\sigma $, there
  is a constant $C_{\varphi,\zeta}$ depending  on $\varphi$
  and $\zeta$ such that
  $$
  |W_\zeta^\sigma(\varphi)(x)| 
  \leq C_{\varphi,\zeta}|W_\psi^\sigma(\psi)(x)|
  \left( 1-\log (1-|x\cdot o|^2)\right) = C _{\varphi , \zeta } \, ( 1-|x\cdot o|^2)^{\sigma
    /2}  \left( 1-\log (1-|x\cdot o|^2)\right)
  $$
  \end{lemma}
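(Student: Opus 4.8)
The plan is to reduce the bound for general smooth vectors $\zeta,\varphi$ to a bound for the distinguished vector $\psi=1_{\nball}$ by writing the matrix coefficient $W_\zeta^\sigma(\varphi)(x)=\langle\varphi,\pi_\sigma(x)\zeta\rangle$ explicitly in terms of the function values of $\varphi$ and $\zeta$ on the ball. Writing $x=\left(\begin{smallmatrix} a&b\\ c^t&d\end{smallmatrix}\right)$ and $w=x\cdot o=b/d$, formula \eqref{de:rep} gives $\pi_\sigma(x)\zeta(z)=(-\dup{z}{b}+\overline d)^{-\sigma}\zeta\bigl((a^*z-\overline c)/(-\dup{z}{b}+\overline d)\bigr)$, so
$$
W_\zeta^\sigma(\varphi)(x)=\int_{\nball}\varphi(z)\,\overline{\pi_\sigma(x)\zeta(z)}\,dv_{\sigma-n-1}(z).
$$
Since by Lemma~\ref{bd}(iii) every smooth vector is bounded on $\nball$, and since $|W_\psi^\sigma(\psi)(x)|=(1-|w|^2)^{\sigma/2}$ by \eqref{eq:c3}, the task is to show that
$$
\Bigl|\int_{\nball}\varphi(z)\,\overline{\pi_\sigma(x)\zeta(z)}\,dv_{\sigma-n-1}(z)\Bigr|
\;\le\;C\,\|\varphi\|_\infty\|\zeta\|_\infty\,(1-|w|^2)^{\sigma/2}\bigl(1-\log(1-|w|^2)\bigr).
$$

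First I would pull out the bounds $|\varphi(z)|\le\|\varphi\|_\infty$ and $|\zeta(\,\cdot\,)|\le\|\zeta\|_\infty$ so that, using $|{-\dup{z}{b}+\overline d}|=|\overline d|\,|1-\dup{z}{w}|$ and $|\overline d|^{-\sigma}=(1-|w|^2)^{\sigma/2}$ (the computation from Corollary~\ref{rkrep}), we are left with estimating
$$
(1-|w|^2)^{\sigma/2}\int_{\nball}\frac{dv_{\sigma-n-1}(z)}{|1-\dup{z}{w}|^\sigma}
= c_{\sigma-n-1}\,(1-|w|^2)^{\sigma/2}\int_{\nball}\frac{(1-|z|^2)^{\sigma-n-1}}{|1-\dup{z}{w}|^\sigma}\,dv(z).
$$
This is exactly an integral of the type covered by \cite[Thm.~1.12]{Zhu2005} (the standard estimates for $\int_{\nball}(1-|z|^2)^c|1-\dup{z}{w}|^{-s}\,dv(z)$). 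With the parameters here — exponent $c=\sigma-n-1>-1$ and $s=\sigma$, which satisfies $s=n+1+c$, the borderline case — that theorem gives the integral is $O\bigl(1-\log(1-|w|^2)\bigr)$ as $|w|\to1$, bounded away from the boundary, hence $O\bigl(1-\log(1-|w|^2)\bigr)$ uniformly on $\nball$. Multiplying by the prefactor $(1-|w|^2)^{\sigma/2}=|W_\psi^\sigma(\psi)(x)|$ yields the claimed inequality, with $C_{\varphi,\zeta}$ proportional to $\|\varphi\|_\infty\|\zeta\|_\infty$ (and these sup-norms are finite and continuous in $\varphi,\zeta$ by the $L^\infty$-estimate in the proof of Lemma~\ref{bd}(iii)).

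The main obstacle is simply to identify the correct weighted kernel integral and invoke the sharp boundary asymptotics: the exponent matching $s=n+1+c$ puts us precisely in the logarithmic borderline regime of \cite[Thm.~1.12]{Zhu2005}, so one must be careful not to mis-cite the polynomial-blowup case. A minor secondary point is that Lemma~\ref{bd}(iii) as stated only asserts boundedness, so to make $C_{\varphi,\zeta}$ genuinely depend continuously on the vectors one should note that the proof there in fact produces $\|\varphi\|_\infty\le C_N\sum_k(1+k)^{\sigma/2-N}\|\varphi_k\|_{\mathcal V_\sigma}$, which is controlled by finitely many of the seminorms $p_\alpha$ from \eqref{eq:pAlpha}; this is routine and I would only remark on it. The final equality in the statement is then just the restatement of $|W_\psi^\sigma(\psi)(x)|=(1-|x\cdot o|^2)^{\sigma/2}$ from Corollary~\ref{rkrep}.
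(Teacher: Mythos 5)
Your proposal is correct and follows essentially the same route as the paper's proof: bound $\varphi$ and $\zeta$ by their sup-norms (finite by Lemma~\ref{bd}(iii)), factor out $|d|^{-\sigma}=(1-|w|^2)^{\sigma/2}=|W_\psi^\sigma(\psi)(x)|$, and reduce to the integral $\int_{\nball}(1-|z|^2)^{\sigma-n-1}|1-\dup{z}{w}|^{-\sigma}\,dv(z)$, which is exactly the logarithmic borderline case of \cite[Thm.~1.12(2)]{Zhu2005}. Your added remark on controlling $\|\varphi\|_\infty$ by finitely many seminorms is a harmless refinement beyond what the paper records.
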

\begin{proof}
We recall the  notation
$x= \begin{pmatrix}
    a & b \\
    c^t & d \end{pmatrix}.$
      By Lemma~\ref{bd}(iii)  the functions
$\zeta$ and $\varphi$ are bounded on $\nball$, so
$C_{\varphi , \zeta } = \sup _{w,z\in \nball } |\varphi (w) \zeta (z)|$ is
finite. Furthermore, $$|\pi _\sigma (x) \zeta (z) | = |\bar{d} -
\langle z, b\rangle |^{-\sigma } |\varphi (x ^{-1}\cdot z)| \leq
|d|^{-\sigma} |1 - \langle z, b/d\rangle |^{-\sigma  } \sup _{z\in
  \nball} |\varphi (z)|,$$ therefore
  \begin{align*}
    |W_\zeta^\sigma(\varphi)(x)| 
    &= \left| 
      \int_{\nball} \varphi(z)\overline{\pi_\sigma(x)\zeta(z)} 
      (1-|z|^2)^{\sigma -n -1}\,dz
    \right| \\
    &\leq  C_{\varphi,\zeta} 
    \frac{1}{|d|^\sigma} \int_{\nball} \frac{1}{|1-\dup{z}{x\cdot o}|^\sigma}
    (1-|z|^2)^{\sigma -n -1}\,dz
  \end{align*}
  According to Theorem 1.12(2) in \cite{Zhu2005} this integral
  is comparable to $\log\frac{1}{1-|x\cdot o|^2}$ as $|x\cdot o|\to 1^-$.
Since $W_\psi (\psi ) (x) = d^{-\sigma}= (1 - |x\cdot o|^2) ^{\sigma /2}$,  the desired estimate follows.
\end{proof}

Recall, that if $\sigma$ is rational, then $G_\sigma$ is a finite covering of $\mathrm{SU}(n,1)$. Hence
the inverse image of $K$ is compact and the center of $G_\sigma$ is finite.

\begin{proposition} \label{prop:315}
  Let $\alpha > -1$ and $1\leq p< \infty$ be given and choose a
  rational $\sigma>n$ such that $-1<\alpha<p(\sigma-n)-1$.
 Assume that $\zeta$ and $\varphi$ are smooth vectors for $\pi _\sigma $.
 Then
  \begin{enumerate}
  \item $\zeta$ is $G_\sigma$-cyclic in $\mathcal{V}_\sigma^\infty$ and
    $W_\zeta^\sigma(\varphi)\in L^p_t(G_\sigma)$ for $t+p\sigma/2 >n$.
    \label{prop:315-1}
  \item For $f\in \mathcal{V}_\sigma^{-\infty}$ there is a constant $C$ such that
    $$
    W_\zeta^\sigma (f)*W_\zeta^\sigma (\zeta) = CW_\zeta^\sigma (f)
    $$
    with convolution on $G_\sigma$.    In fact, $C = \Delta_\sigma \|\zeta
    \|^2_{A_{\sigma -n-1}^2}$ where $\Delta _\sigma $
    is the formal
    dimension of $\pi _\sigma $. 
    \label{prop:315-2}
  \item The mapping
    $$
    L^p_{\alpha+n+1-\sigma p/2}(G_\sigma)\times \mathcal{V}_\sigma^\infty \ni 
    (f,\varphi) \mapsto \int_{G_\sigma} f(x)\dup{\pi^*(x)\zeta}{\varphi}\,dx \in \mathbb{C}
    $$
    is continuous.
    \label{prop:315-3}
  \item $f\in \mathcal{A}^p_\alpha$ if and only if
    $f\in \mathcal{V}_\sigma^{-\infty}$ and $W_\zeta^\sigma(f) \in L^p_{\alpha+n+1-\sigma p/2}(G_\sigma)$.
    \label{prop:315-4}
  \item Up to normalization of $\zeta$,  $\mathcal{A}^p_\alpha$ is equal to
    the coorbit $\Co_{\mathcal{V}_\sigma^{\infty}}^\zeta L^p_{\alpha+n+1-\sigma p/2}(G_\sigma)$.
    \label{prop:315-5}
  \end{enumerate}
\end{proposition}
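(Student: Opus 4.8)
The five assertions are the $G_\sigma$-counterparts of Propositions~\ref{prop:reproducing} and \ref{prop:38}, Corollary~\ref{corc1}, and Theorem~\ref{identberg}, and the plan is simply to transport those proofs from $S$ to $G_\sigma$; the only genuinely new ingredient is the logarithmic factor in the kernel estimate of Lemma~\ref{lemma:matrixestimates}. I would start with the reproducing formula (ii). Since $\sigma>n$ is rational, $\pi_\sigma$ is irreducible and square integrable on $G_\sigma$, so the orthogonality relations (\cite[Thm.~14.3.3]{Dix} or \cite{fuehr,Feichtinger1989a}) apply and give, for $f,g,\eta_1,\eta_2\in\cV_\sigma$,
\[
 \int_{G_\sigma}W_{\eta_1}^\sigma(f)(x)\,\overline{W_{\eta_1}^\sigma(g)(x)}\,dx=\Delta_\sigma\,\|\eta_1\|^2_{\cV_\sigma}\,\langle f,g\rangle_{\cV_\sigma},
\]
whence, by the usual unitarity manipulation, $W_{\eta_1}^\sigma(f)*W_{\eta_2}^\sigma(\eta_1)=\Delta_\sigma\|\eta_1\|^2\,W_{\eta_2}^\sigma(f)$ for $f\in\cV_\sigma$; with $\eta_1=\eta_2=\zeta$ this is (ii) for $f\in\cV_\sigma$, and the constant is the one claimed. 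To pass to $f=\sum_k f_k\in\cV_\sigma^{-\infty}$ I would apply the identity to each homogeneous component $f_k\in\cV_\sigma$ and sum over $k$, the interchange of sum and convolution being licensed by $W_\eta^\sigma(f_k)(x)=\langle f_k,(\pi_\sigma(x)\eta)_k\rangle$ together with the rapid decay of $\|(\pi_\sigma(x)\eta)_k\|_{\cV_\sigma}$, locally uniformly in $x$, for a smooth vector $\eta$ --- the very mechanism behind \eqref{waveletcoeff1}. This shows that $C^{-1/2}\zeta$ is an analyzing vector for $\pi_\sigma$ on $G_\sigma$.

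For the integrability in (i), square integrability gives $W_\zeta^\sigma(\varphi)\in L^2(G_\sigma)$ directly; for general $L^p_t$ I would integrate the bound of Lemma~\ref{lemma:matrixestimates}. Because $\sigma$ is rational the pullback $K_\sigma$ of $K$ in $G_\sigma$ is compact and both the weight and the majorant are $K_\sigma$-invariant, so $\int_{G_\sigma}|W_\zeta^\sigma(\varphi)(x)|^p(1-|x\cdot o|^2)^t\,dx$ reduces to a finite multiple of $\int_\nball(1-|z|^2)^{t+p\sigma/2-n-1}\bigl(1-\log(1-|z|^2)\bigr)^p\,dv(z)$, which is finite precisely when $t+p\sigma/2>n$ since the logarithm is harmless. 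Cyclicity of $\zeta$ in $\cV_\sigma^\infty$ I would deduce from the change-of-window identity $W_\psi^\sigma(f)=c\,W_\zeta^\sigma(f)*W_\psi^\sigma(\zeta)$ with $\psi=1_\nball$, proved exactly as above: if $W_\zeta^\sigma(f)\equiv0$ then $W_\psi^\sigma(f)\equiv0$, hence $f(w)=0$ for all $w\in\nball$ by \eqref{waveletcoeff1}, and since $f$ is a convergent series of homogeneous polynomials this forces $f=0$. Claim (iii) then runs as in Proposition~\ref{prop:reproducing}(ii): it suffices that $x\mapsto|W_\zeta^\sigma(\varphi)(x^{-1})|=|W_\varphi^\sigma(\zeta)(x)|$ lie in the K\"othe dual of $L^p_{\alpha+n+1-\sigma p/2}(G_\sigma)$, namely $L^q_{\sigma q/2-(\alpha+n+1)q/p}(G_\sigma)$ with $1/p+1/q=1$ ($G_\sigma$ being unimodular as a finite cover of the semisimple group $\sun$); by (i) with $\varphi$ and $\zeta$ exchanged this holds whenever $\sigma q/2-(\alpha+n+1)q/p+q\sigma/2>n$, which is exactly $\alpha<p(\sigma-n)-1$.

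For (iv), recall from the proof of Theorem~\ref{identberg} that $\cA^p_\alpha\subseteq\cV_\sigma^{-\infty}$, and that the isometry computation carried out over $G_\sigma$ instead of $S$ gives, for $f\in\cV_\sigma^{-\infty}$, $\int_{G_\sigma}|W_\psi^\sigma(f)(x)|^p(1-|x\cdot o|^2)^{\alpha+n+1-\sigma p/2}\,dx=c_\alpha\|f\|^p_{\cA^p_\alpha}$, so that $f\in\cA^p_\alpha$ iff $W_\psi^\sigma(f)\in L^p_{\alpha+n+1-\sigma p/2}(G_\sigma)$. The two change-of-window estimates $|W_\zeta^\sigma(f)|\le c\,|W_\psi^\sigma(f)|*|W_\zeta^\sigma(\psi)|$ and $|W_\psi^\sigma(f)|\le c\,|W_\zeta^\sigma(f)|*|W_\psi^\sigma(\zeta)|$ then reduce (iv) to the boundedness on $L^p_{\alpha+n+1-\sigma p/2}(G_\sigma)$ of convolution with the kernel $(1-|x\cdot o|^2)^{\sigma/2}\bigl(1-\log(1-|x\cdot o|^2)\bigr)$, since by Lemma~\ref{lemma:matrixestimates} both $|W_\zeta^\sigma(\psi)|$ and $|W_\psi^\sigma(\zeta)|$ are dominated by it. As in Proposition~\ref{prop:38} this convolution reduces to an integral operator on $\nball$ of the type in Theorem~\ref{Thm:Zhu1} with $a=\sigma/2$ and $b=\sigma/2-n-1$, except for an extra factor $\log\tfrac{2}{1-|z|^2}$ on the kernel. \emph{This is the one place where the argument genuinely departs from the $S$-case, and I expect it to be the main obstacle.} Since $\alpha<p(\sigma-n)-1$ puts the parameters strictly inside the admissibility range of Theorem~\ref{Thm:Zhu1}, the logarithm can be absorbed into $(1-|z|^2)^{-\delta}$ for a small $\delta>0$ while keeping the strict inequalities, so the operator stays bounded; alternatively one runs a direct Schur test on $G_\sigma$ as indicated in the remark after Proposition~\ref{prop:38}.

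Finally, (v) is formal once (i)--(iv) are established. The weight $(1-|x\cdot o|^2)^{\alpha+n+1-\sigma p/2}$ is moderate on $G_\sigma$, so $Y=L^p_{\alpha+n+1-\sigma p/2}(G_\sigma)$ satisfies (R1); (R2) is (iii); and (R3) --- for all the relevant kernels $|W_\psi^\sigma(\psi)|$, $|W_\varphi^\sigma(\psi)|$, $|W_\psi^\sigma(\varphi)|$, each controlled by the same logarithmically corrected kernel --- is the convolution boundedness just proved. Hence $C^{-1/2}\zeta$ is an analyzing vector and $Y$ satisfies (R1)--(R3), so the coorbit space $\Co_{\cV_\sigma^\infty}^{C^{-1/2}\zeta}Y$ is defined, and since rescaling $\zeta$ by a nonzero constant changes neither $|W_\zeta^\sigma(f)|$ up to a constant nor, therefore, membership of $W_\zeta^\sigma(f)$ in $Y$, part (iv) identifies this coorbit with $\cA^p_\alpha$.
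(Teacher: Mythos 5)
Your proposal is correct and parts (i)--(iii) follow the paper's own argument almost verbatim (Köthe-dual criterion for (iii), orthogonality relations for (ii), absorption of the logarithm for the $L^p_t$ membership in (i)). The genuine divergence is in (iv)--(v): the paper does \emph{not} argue via a two-sided change-of-window estimate there; it instead invokes the abstract independence-of-analyzing-vector result \cite[Thm.~2.7]{Christensen2011}, whose hypotheses it checks by exhibiting the smooth vector $\eta=\dup{\varphi}{\zeta}\psi$ and by appealing to Corollary~\ref{corc1} for the continuity of convolution by $|W_\psi(\zeta)|$ and $|W_\zeta(\psi)|$. Your route — dominating $|W_\zeta^\sigma(f)|$ and $|W_\psi^\sigma(f)|$ by each other's convolutions with the mixed kernels and then proving boundedness of convolution with $(1-|x\cdot o|^2)^{\sigma/2}\bigl(1-\log(1-|x\cdot o|^2)\bigr)$ by absorbing the logarithm into $(1-|z|^2)^{-\delta}$ inside Theorem~\ref{Thm:Zhu1} — is exactly the computation the paper defers to the proof of Theorem~\ref{thm:full}, so it is sound; it buys a self-contained identification of $\cA^p_\alpha$ without the external coorbit-equivalence theorem. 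In fact you are working harder than necessary at the step you flag as the main obstacle: the two kernels you actually need, $|W_\zeta^\sigma(\psi)|$ and $|W_\psi^\sigma(\zeta)|$, each have one window equal to $\psi=1_{\nball}$ and hence satisfy $|W_\psi^\sigma(\zeta)(x)|=|W_\psi^\sigma(\psi)(x)|\,|\zeta(x\cdot o)|\le C\,(1-|x\cdot o|^2)^{\sigma/2}$ by boundedness of smooth vectors (and similarly for the other one, using $1-|x^{-1}\cdot o|^2=1-|x\cdot o|^2$), so no logarithmic correction arises and Proposition~\ref{prop:38} applies directly; the logarithm only becomes unavoidable for the doubly general kernel $W_\zeta^\sigma(\varphi)$ needed for the discretization in Theorem~\ref{thm:full}. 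The one soft spot in your write-up is the extension of the reproducing formula (and of the change-of-window identities) from $\cV_\sigma$ to $\cV_\sigma^{-\infty}$: interchanging $\sum_k$ with the convolution integral over the noncompact group $G_\sigma$ is not justified by locally uniform decay of $\|(\pi_\sigma(x)\zeta)_k\|$ alone, and the paper sidesteps this by citing \cite[Prop.~3.2]{Christensen2009}; you should either supply a genuine domination argument or cite that result as the paper does.
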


\begin{proof} (\ref{prop:315-1}) Since
  $\pi_\sigma$ is irreducible, every non-zero vector is cyclic.

Next we prove that $W_\zeta^\sigma(\varphi )\in L^p_t(G_\sigma)$ for
$t+p\sigma/2 >n$. Since $x \to (1-|x\cdot o|^2)^\epsilon \log
(1-|x\cdot o|^2)$ is bounded on $G _\sigma $ for every $\epsilon >0$,
we may choose $\epsilon $ and $C>0$, such that $t+p(\sigma -\epsilon
)/2 > n$ and (by Lemma~\ref{lemma:matrixestimates})
$$
|W_\zeta^\sigma(\varphi)(x)| 
  \leq   C _{\varphi , \zeta } \, ( 1-|x\cdot o|^2)^{\sigma
    /2}  \left( 1-\log (1-|x\cdot o|^2)\right) \leq C ( 1-|x\cdot
  o|^2)^{(\sigma - \epsilon ) /2} \, .
$$
  The weight $x\mapsto (1-|x\cdot o|^2) $ is 
  $K$-right-invariant on $G_\sigma $, therefore the decomposition of
  the Haar measure on $G _\sigma $ described in Theorem~\ref{th:2.3} yields 
    \begin{align*}
      \int_{G_\sigma} &|W_\zeta^\sigma(\varphi)(x)|^p(1-|x\cdot o|^2)^t\,dx \\
      &\leq C \int_{G_\sigma}  (1-|x\cdot o|^2)^{\frac{(\sigma
          -\epsilon)p}{2} \, } (1-|x\cdot o|^2)^t\,dx \\
      &= C  \int_{S} \int_{K_\sigma}  (1-|sk\cdot o|^2)^{\frac{(\sigma
          -\epsilon)p}{2} +t}\,dk\,ds \\
      &= C \int_{S} (1-|s\cdot o|^2)^{\frac{\sigma
          -\epsilon}{2} \, p+t}\,\,ds \, ,
    \end{align*}
and the last integral is finite by
Proposition~\ref{prop:integrability}. Thus $W_\zeta^\sigma(\varphi )\in L^p_t(G_\sigma)$ for
$t+p\sigma/2 >n$.

    
 \noindent
(\ref{prop:315-2}) The reproducing formula follows from the orthogonality
    relations for representation coefficients.
    As shown in Proposition 3.2
    in \cite{Christensen2009}, the reproducing formula extends from the Hilbert space
    to $f \in \mathcal{V}_\sigma^{-\infty}$.    
   
  \noindent 
 (\ref{prop:315-3}) 
    As in the proof of Proposition~\ref{prop:38} it suffices to show
    that $W^\sigma _\zeta (\varphi )$ 
    is in     $L^q_{\sigma q/2 -(\alpha+n+1)q/p}(G_\sigma )$ where
    $1/p+1/q=1$. By item (\ref{prop:315-1}) this is the case, if $\frac{\sigma q}{2}
    -\frac{(\alpha+n+1)q}{p} + \frac{\sigma q }{2} > n$, which amounts
    exactly to our assumption that $\alpha < p(\sigma -n) -1$. 


  \noindent
  (\ref{prop:315-4}) and (\ref{prop:315-5}) We already know that this statement is correct
    for $\psi=1_\nball$ in the case of the group $S$
    and therefore also for $G_\sigma$.
    In order to finish the proof we show that the coorbit space
    does not depend on the analyzing vector. To do this we 
    verify that the conditions
    in Theorem 2.7 in \cite{Christensen2011} are satisfied.
     
    The square integrability of $\pi _\sigma $ and unimodularity of
    $G_\sigma$ implies that
    \begin{equation*}
      \int_{G_\sigma} 
      \dup{f}{\pi_\sigma(x)\zeta} \dup{\pi_\sigma(x)\varphi}{\psi} \,dx
      = \frac{1}{\Delta_\sigma} \dup{\varphi}{\zeta}\dup{f}{\psi}
    \end{equation*}
    for $\zeta,\varphi,\psi, f\in \cV_\sigma$.
    If $\zeta,\varphi,\psi$ are smooth then the vector
    \[ \eta = \int_{G_\sigma} 
    \dup{\pi_\sigma(x)\varphi}{\psi}\pi_\sigma(x)\zeta \,dx 
    = \dup{\varphi}{\zeta} \psi\]
    is also smooth (just a multiple of $\psi$), and therefore 
    the mapping 
      \begin{equation*}
        f\mapsto 
        \int_{G_\sigma} 
        \dup{f}{\pi_\sigma(x)\zeta} \dup{\pi_\sigma(x)\varphi}{\psi} \,dx
        = \dup{f}{\eta}
      \end{equation*}
    is weakly continuous from $\mathcal{V}_\sigma^{-\infty}$ to $\mathbb{C}$.
    
    By Corollary~\ref{corc1} the convolutions by both $W_\psi(\zeta)$ 
    and $W_\zeta(\psi)$ on $L^p_{\alpha+n+1-\sigma p/2}(G_\sigma)$ are continuous, 
    and thus the conditions of
    \cite[Theorem 2.7]{Christensen2011} have been verified.
 \end{proof}

\subsection{Atomic decomposition via the group $G_\sigma$}
%

We will now show that every smooth vector, and in particular
every polynomial, provides atomic decompositions and 
frames for the Bergman space.
To our knowledge this is a new result in the  analysis of
several complex variables and goes beyond the atomic decompositions of
Coifman and Rochberg~\cite{Coifman1980}.  In dimension $1$ this consequence of
coorbit theory was observed in~\cite{pap3}. 


\begin{theorem} \label{thm:full}
Assume that $\sigma > n$ is rational, $1\leq p < \infty $,  and $-1<\alpha < p(\sigma-n)-1$.
Fix a non-zero smooth vector $\zeta$  for $\pi _\sigma $. 

(A) Then
$\mathcal{A}_\alpha ^p$ is a coorbit space with respect to the cyclic
vector $\zeta$, precisely, 
$\mathcal{A}^p_\alpha 
= \Co_{\mathcal{V}_\sigma^\infty}^\zeta L^p_{\alpha+n+1-\sigma
  p/2}(G_\sigma) $. In particular, the coorbit space
$\Co_{\mathcal{V}_\sigma^\infty}^\zeta L^p_{\alpha+n+1-\sigma 
  p/2}(G_\sigma) $ is independent of
the cyclic vector $\zeta \in V_\sigma ^\infty $. 

(B) For every smooth vector $\zeta  $  there exists a neighborhood
$U_\epsilon \subseteq G_\sigma$, such that  every
relatively separated and $U_\epsilon$-dense set $X= \{x_i\mid  i\in
I\}\subseteq G_\sigma $ generates
an atomic decomposition  $\{\pi (x_i)\zeta  \mid i\in I\}$ for
$\mathcal{A}^p_\alpha $. Thus every $f\in \mathcal{A}^p_\alpha  $
possesses an unconditionally convergent expansion
$$f = \sum _{i\in I} c_i \pi _\sigma (x_i) \zeta
$$ 
with coefficients in $\ell ^p_{\alpha +n+1-\sigma p/2}(X)$.

(C)  For every smooth vector $\zeta  $  there exists a neighborhood
$U_\epsilon \subseteq G_\sigma$, such that every
relatively separated and $U_\epsilon$-dense set $X= \{x_i \mid  i\in
I\}\subseteq G_\sigma $ generates a Banach frame  $\{ \pi _\sigma (x_i) \zeta \mid i \in I
\}$  for $\mathcal{A}^p_\alpha $. In particular, for
all $f \in \mathcal{A}^p_\alpha  $
$$
A \|f\|_{\mathcal{A}^p_\alpha } \leq \sum _{i\in I} |\langle f, \pi
_\sigma (x_i)\zeta  \rangle |^p (1-|x_i \cdot o|^2)^{\alpha +n+1-\sigma p/2}  \leq B
\|f\|_{\mathcal{A}^p_\alpha } \qquad \text{ for all } f\in
\mathcal{A}^p_\alpha  \, .
$$
\end{theorem}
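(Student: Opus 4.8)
The plan is to read off part (A) from Proposition~\ref{prop:315} and to obtain (B) and (C) by applying the abstract Theorem~\ref{at} to the representation $\pi_\sigma$ of the connected Lie group $G_\sigma$, with analyzing vector $\zeta$ and coefficient space $Y=L^p_{\alpha+n+1-\sigma p/2}(G_\sigma)$. For (A): Proposition~\ref{prop:315}(\ref{prop:315-2}) gives $W_\zeta^\sigma(f)*W_\zeta^\sigma(\zeta)=C\,W_\zeta^\sigma(f)$ with $C=\Delta_\sigma\|\zeta\|^2_{\mathcal{A}^2_{\sigma-n-1}}>0$, so after replacing $\zeta$ by $\zeta/\sqrt C$ --- a change affecting neither the span of $\{\pi_\sigma(x)\zeta\}$ nor, up to rescaling of the coefficients, the expansions below --- the vector $\zeta$ is an analyzing vector. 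Then Proposition~\ref{prop:315}(\ref{prop:315-4})--(\ref{prop:315-5}) identify $\mathcal{A}^p_\alpha$ with $\Co_{\mathcal{V}_\sigma^\infty}^\zeta L^p_{\alpha+n+1-\sigma p/2}(G_\sigma)$, and since the Bergman space on the left does not refer to $\zeta$, this coorbit is independent of the chosen cyclic smooth vector.

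For (B) and (C) I would verify the hypotheses of Theorem~\ref{at} in turn. Since $\sigma$ is rational, $G_\sigma$ is a finite --- hence connected --- covering of $\mathrm{SU}(n,1)$, $(\pi_\sigma,\mathcal{V}_\sigma)$ is unitary on it, and $\mathcal{S}=\mathcal{V}_\sigma^\infty$ is a Fr\'echet space continuously embedded in $\mathcal{V}_\sigma$ that is invariant under $G_\sigma$ and its Lie algebra (Section~\ref{se:Smooth}); the normalized $\zeta$ is analyzing by the first paragraph. It remains to check (R1)--(R3) for $Y=L^p_{\alpha+n+1-\sigma p/2}(G_\sigma)$. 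Solidity is clear, and the left/right translation invariance in (R1) reduces to the moderateness of the weight $x\mapsto(1-|x\cdot o|^2)^t$ on $G_\sigma$, which holds as on $S$ since $x\mapsto|x\cdot o|$ is a well-defined function on $G_\sigma$. Axiom (R2) is exactly Proposition~\ref{prop:315}(\ref{prop:315-3}). For (R3) one must bound $\tilde f\mapsto\tilde f*|W_\varphi^\sigma(\zeta)|$ and $\tilde f\mapsto\tilde f*|W_\zeta^\sigma(\varphi)|$ on $Y$ for every smooth $\varphi$; by Lemma~\ref{lemma:matrixestimates} both kernels are dominated by $C_{\varphi,\zeta}\,(1-|x\cdot o|^2)^{(\sigma-\epsilon)/2}$ for any $\epsilon>0$, and since $|\tilde f*k|\le|\tilde f|*|k|$ and $Y$ is solid it suffices to bound convolution with this right-$K_\sigma$-invariant majorant. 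Because $K_\sigma$ is compact and $|x\cdot o|$ is $K_\sigma$-right-invariant, the Haar decomposition of Theorem~\ref{th:2.3} turns this $G_\sigma$-convolution into an $S$-convolution with the corresponding radial kernel on $\nball$, i.e.\ into the operator $\mathbf{S}$ of Theorem~\ref{Thm:Zhu1} with $a=(\sigma-\epsilon)/2$ and $b=(\sigma-\epsilon)/2-n-1$; for $\epsilon$ small enough the condition $-pa<\alpha-\sigma p/2+1<p(b+1)$ follows from $-1<\alpha<p(\sigma-n)-1$ exactly as in Proposition~\ref{prop:38} and Corollary~\ref{corc1}. This establishes (R1)--(R3).

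With the hypotheses in place, Theorem~\ref{at}(A) furnishes a neighborhood $U_\epsilon\subseteq G_\sigma$ such that every relatively separated, $U_\epsilon$-dense set $X=\{x_i:i\in I\}$ yields coefficient functionals $c_i\in(\mathcal{A}^p_\alpha)^*$ with $f=\sum_i c_i(f)\,\pi_\sigma(x_i)\zeta$, converging unconditionally in $\mathcal{A}^p_\alpha$ because the compactly supported functions are dense in $Y=L^p$; likewise Theorem~\ref{at}(B) provides a dual frame together with the two-sided estimate. By the remark following Theorem~\ref{at}, $\|\sum_i a_i\,1_{x_iU_\epsilon}\|_Y$ is equivalent to $\big(\sum_i|a_i|^p(1-|x_i\cdot o|^2)^{\alpha+n+1-\sigma p/2}\big)^{1/p}$; taking $a_i=c_i(f)$ places $c$ in $\ell^p_{\alpha+n+1-\sigma p/2}(X)$ as claimed in (B), and taking $a_i=\langle f,\pi_\sigma(x_i)\zeta\rangle$ yields the frame inequalities of (C). The one step requiring genuine work is the verification of (R3) on $G_\sigma$: the logarithmic factor in Lemma~\ref{lemma:matrixestimates} forces the exponent $\sigma-\epsilon$, and one has to check that the resulting $G_\sigma$-convolution really reduces to the admissible operator on $S$ controlled by Theorem~\ref{Thm:Zhu1}; everything else is bookkeeping with facts already established.
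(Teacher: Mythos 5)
Your proposal is correct and follows essentially the same route as the paper: part (A) is read off from Proposition~\ref{prop:315}, and parts (B)--(C) come from Theorem~\ref{at} once the convolution operators with kernels $|W_\zeta^\sigma(\varphi)|$ are shown to be bounded by absorbing the logarithmic factor of Lemma~\ref{lemma:matrixestimates} into an $\epsilon$-loss of the exponent and invoking Theorem~\ref{Thm:Zhu1} with $a=(\sigma-\epsilon)/2$, $b=(\sigma-\epsilon)/2-n-1$. Your extra care with (R1), the normalization of $\zeta$, and the reduction of the $G_\sigma$-convolution to $S$ via the compactness of $K_\sigma$ only makes explicit what the paper leaves implicit.
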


\begin{proof}
Again we apply Theorem 
~\ref{at} to the representation
$\pi _\sigma $ of the group $G_\sigma $ (instead of $S$) and the
vector $\zeta $ (instead of $\psi $).  In Proposition~\ref{prop:315}
we have already verified the assumptions that guarantee that 
$\Co_{\mathcal{V}_\sigma^\infty}^\zeta L^p_{\alpha+n+1-\sigma p/2}(G_\sigma)$ is well defined.

For the atomic decompositions and Banach frames we
only need to verify the additional property that the convolution
operator  $f \mapsto f \ast |W_\zeta^\sigma(\varphi)|$  is continuous on
$L^p_{\alpha+n+1-\sigma p/2}(G_\sigma)$  for every smooth vector $\zeta$
and $\varphi$. 
Since 
$|W_\zeta^\sigma(\varphi)(x)|\leq C |W_\psi^\sigma(\psi)(x)| 
\left( 1- \log(1-|x\cdot o|^2)\right)$
and the latter function is $K$-bi-invariant, the convolution
operator  reduces
to an integral over the group $S$ and by Lemma~\ref{conv} to an
integral over $\nball $.  
We may  therefore assume that $\widetilde{f}$
is in $L^p_{\alpha+n+1-\sigma p/2}(S)$ and  corresponds
to a function $f\in L^p_{\alpha-\sigma p/2}(\nball)$.
The only difference from
the  calculation on $S$ in \eqref{eq:c6}  is the factor
$-\log(1-|x\cdot o|^2)$ arising from
Lemma~\ref{lemma:matrixestimates}. Since  $|W_\psi^\sigma(\psi)(x)|
= (1-|x\cdot o|^2)^{\sigma /2}$, we obtain 
for every  $\epsilon>0$ (to be chosen later) 
\begin{align*}
  \int_S &|\widetilde{f}(x)| |W_\psi^\sigma(\psi)(x^{-1}y)| 
  \left(1-\log(1-|x^{-1}y\cdot o|^2)\right) \,dx \\
  &\leq C \int_S |\widetilde{f}(x)| (1-|x^{-1}y\cdot o|^2)^{\sigma /2-\epsilon}\,dx \\
  &= C (1-|z|^2)^{(\sigma-\epsilon)/2}
  \int_{\nball} |f(w)| 
  \frac{(1-|w|^2)^{(\sigma-\epsilon)/2-n-1}}{|1-\dup{z}{w}|^{\sigma-\epsilon}}\,dv(w)
\end{align*}
This corresponds to the operator $\mathbf{S}$ in Theorem~\ref{Thm:Zhu1}
with $a=(\sigma-\epsilon)/2$ and
$b=(\sigma-\epsilon)/2-n-1$. According to Theorem~\ref{Thm:Zhu1} the operator
$\mathbf{S}$  is continuous on $L^p_{\alpha-\sigma p/2}(\nball)$ if and only
if 
$-(\sigma-\epsilon)p/2 < \alpha -\sigma p/2+1<p[(\sigma-\epsilon)/2-n]$.
Since the assumption  $-1<\alpha < p(\sigma-n)-1$ is equivalent to
the strict   inequalities 
$-\sigma p/2 < \alpha -\sigma p/2+1< p(\sigma /2-n)$,  a suitable  $\epsilon$ can always be chosen.
\end{proof}

\begin{remark}
  We would like to point out that the group $N$ is isomorphic
  to the Heisenberg group and that the group $A$ acts by dilations.
  Hence one can use the coorbit theory to study Besov spaces
  and their atomic decompositions on the unit ball 
  \cite[Chapter 6]{Zhu2005} using the theory in
  \cite{Christensen2012a,Fuhr2012}.
\end{remark}

\subsection{Further Remarks}

In the case of integrable representations, i.e., $\sigma >2n$ and
$\sigma \in \mathbb{Q}$, the original coorbit theory
in~\cite{Feichtinger1988,Feichtinger1989a,Grochenig1991} can be
applied directly and many of the technical details can be omitted. For
instance, if $W^\sigma _\zeta (\psi ) \in L^1(G_\sigma )$, then the
convolution operator $f \to f \ast |W^\sigma _\zeta (\psi )| $ is
bounded on $L^p(G_\sigma )$ and the subtleties of
Theorem~\ref{Thm:Zhu1}  are not needed. In this case the set of atoms
used in Theorem~\ref{thm:full} can be significantly enlarged. For
simplicity, we discuss  this effect only for coorbits of the unweighted $L^p$-spaces
on $G_\sigma $, which are $\Co _{\mathcal{V}_\sigma}^\zeta L^p
  (G_\sigma ) = \mathcal{A}^p_{\sigma p /2 -  n - 1}$. If $\sigma >2n,
  \sigma \in \mathbb{Q}$, then the definition of $ \Co _{\mathcal{V}_\sigma}^\zeta L^p
  (G_\sigma )$ is independent of $\zeta \in \Co _{\mathcal{V}_\sigma}^\zeta L^1
  (G_\sigma ) = \mathcal{A}^1_{\sigma /2 -  n - 1}$ rather than just
  $\zeta\in \mathcal{V}^\infty _\sigma
  $~\cite[Thm.~4.2]{Feichtinger1989a}. Likewise, in
  Theorem~\ref{thm:full} the atom $\zeta $ for the atomic
  decompositions and frames can be chosen in a space  that is
  strictly between $\mathcal{V}_\sigma ^\infty$ and
  $\mathcal{A}^1_{\sigma /2 -  n - 1}$ (denoted by $\mathcal{B}_w$
  in~\cite[Thm.~4.2]{Feichtinger1989a}).  

As a further benefit of the use of integrable representations we
mention the construction of Riesz sequences in certain Bergman spaces. 
Recall that for a given compact set $K\subseteq G_\sigma $ a set
$X= \{x_i\}_{i\in I}$  is called $K$-separated, if the translates $x_i K$
are pairwise disjoint. 

\begin{proposition}
  \label{riesz}
Assume that $\sigma >2n$, $\sigma \in \mathbb{Q}$, $1< p< \infty $,
and $\zeta \in \mathcal{V}_\sigma ^\infty $. 
Then there exists a
compact set $K \subseteq G_\sigma $ such that every $K$-separated set
$X=\{x_i \}_{i\in I}  \subseteq  G_\sigma $ generates a $p$-Riesz sequence for $\mathcal{A}^p _{\sigma p/2 - n - 1}$.
This means that there exist constants $A,B >0$, such that 
\begin{equation}
  \label{eq:137}
A \|c\|_{\ell ^p(X)} \leq \| \sum _{i\in I} c_i \pi _\sigma (x_i) \zeta
  \|_{\mathcal{A}^p _{\sigma p/2 - n - 1}} \leq B \|c\|_{\ell ^p(I) }
\end{equation}
for all $c \in \ell ^p(X)$. 
\end{proposition}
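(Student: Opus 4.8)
The plan is to prove the two-sided estimate \eqref{eq:137} by a Neumann-series (perturbation-of-identity) argument, in effect running the usual coorbit construction ``backwards'': rather than reconstructing $f$ from a \emph{dense} sampling set we recover the coefficient sequence $c$ from the synthesized element $\sum_{i\in I}c_i\pi_\sigma(x_i)\zeta$, which succeeds precisely when $X$ is \emph{sparse} enough. The one analytic ingredient is the localization of the matrix coefficients of $\pi_\sigma$. Since $\sigma\in\mathbb{Q}$ and $\sigma>2n$, the representation $\pi_\sigma$ of $G_\sigma$ is integrable, and Lemma~\ref{lemma:matrixestimates} together with the integral computation in Proposition~\ref{prop:integrability} gives, for any sufficiently small $\varepsilon>0$ and all smooth vectors $\zeta,\varphi$, the bound
\[
|W_\zeta^\sigma(\varphi)(x)|\le C_{\zeta,\varphi}\,(1-|x\cdot o|^2)^{(\sigma-\varepsilon)/2},
\]
the logarithmic factor of Lemma~\ref{lemma:matrixestimates} being absorbed into the exponent. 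Set $w_0(x)=(1-|x\cdot o|^2)^{(\sigma-\varepsilon)/2}$. If $\varepsilon$ is chosen with $\sigma-\varepsilon>2n$, then $w_0$ is continuous and lies in the Wiener amalgam space $W(L^\infty,L^1)(G_\sigma)$, since $\int_{G_\sigma}w_0(x)\,dx$ reduces to $c\int_{\nball}(1-|z|^2)^{(\sigma-\varepsilon)/2-n-1}\,dv(z)<\infty$. We also use that $G_\sigma$ is unimodular, so no modular weights intervene.

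Write $T\colon\ell^p(X)\to\cA^p_{\sigma p/2-n-1}$ for the synthesis map $Tc=\sum_{i\in I}c_i\pi_\sigma(x_i)\zeta$ and set $\Phi=W_\zeta^\sigma(\zeta)$. Because $\cA^p_{\sigma p/2-n-1}=\Co_{\cV_\sigma^\infty}^\zeta L^p(G_\sigma)$ by Proposition~\ref{prop:315}, boundedness of $T$ amounts to $\|W_\zeta^\sigma(Tc)\|_{L^p(G_\sigma)}\lesssim\|c\|_{\ell^p}$; by unitarity $W_\zeta^\sigma(\pi_\sigma(x_i)\zeta)(y)=\langle\pi_\sigma(y^{-1}x_i)\zeta,\zeta\rangle$, so $|W_\zeta^\sigma(Tc)(y)|\le C\sum_{i\in I}|c_i|\,w_0(y^{-1}x_i)$, and the right-hand inequality in \eqref{eq:137} follows from the discrete convolution inequality for amalgam spaces applied to the relatively separated set $X$ --- this is exactly the estimate behind the converse half of Theorem~\ref{at}(A). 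Next define the analysis map $S\colon\cA^p_{\sigma p/2-n-1}\to\ell^p(X)$ by $(Sf)_j=\|\zeta\|_{\cV_\sigma}^{-2}\,W_\zeta^\sigma(f)(x_j)$. By the reproducing formula of Proposition~\ref{prop:315}(\ref{prop:315-2}), $W_\zeta^\sigma(f)=C^{-1}\,W_\zeta^\sigma(f)*\Phi$ for a constant $C>0$; since $W_\zeta^\sigma(f)\in L^p(G_\sigma)$ and $\Phi\in W(L^\infty,L^1)(G_\sigma)$, the convolution relation $L^p*W(L^\infty,L^1)\subseteq W(L^\infty,L^p)$ puts $W_\zeta^\sigma(f)$ in $W(L^\infty,L^p)(G_\sigma)$, and sampling a function of this space on the relatively separated set $X$ yields $\|(W_\zeta^\sigma(f)(x_j))_j\|_{\ell^p}\lesssim\|W_\zeta^\sigma(f)\|_{L^p}\asymp\|f\|_{\cA^p_{\sigma p/2-n-1}}$. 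Thus $S$ is bounded.

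Composing, $ST=I+E$ on $\ell^p(X)$: the term $i=j$ of $(STc)_j$ equals $c_j$, so $E$ is the matrix operator with $e_{jj}=0$ and $e_{ji}=\|\zeta\|_{\cV_\sigma}^{-2}\langle\pi_\sigma(x_j^{-1}x_i)\zeta,\zeta\rangle$ for $i\ne j$, whence $|e_{ji}|=\|\zeta\|_{\cV_\sigma}^{-2}|\Phi(x_j^{-1}x_i)|$. By Schur's test,
\[
\|E\|_{\ell^p\to\ell^p}\le\|\zeta\|_{\cV_\sigma}^{-2}\,\sup_{j\in I}\ \sum_{i\ne j}|\Phi(x_j^{-1}x_i)|.
\]
This is where the separation enters. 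Take $K=B_R$, a compact ball of radius $R$ about the identity for a fixed left-invariant metric on $G_\sigma$. The disjointness of the translates $x_iB_R$ forces $x_j^{-1}x_i\notin B_R$ for $i\ne j$, and $\{x_j^{-1}x_i\}_{i\in I}$ is again $B_R$-separated; hence $\sum_{i\ne j}|\Phi(x_j^{-1}x_i)|\le C\sum_{i\ne j}w_0(x_j^{-1}x_i)$ is bounded, up to a constant depending only on the amalgam geometry, by the tail $\int_{G_\sigma\setminus B_{R'}}\widetilde{w_0}$ of the (integrable) local-supremum envelope $\widetilde{w_0}$ of $w_0$, with $R'=R'(R)\to\infty$ as $R\to\infty$. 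Since $\widetilde{w_0}\in L^1(G_\sigma)$, this supremum tends to $0$ with $R$, so there is $R_0$ such that $\|E\|_{\ell^p\to\ell^p}\le\tfrac12$ whenever $X$ is $B_{R_0}$-separated. Set $K=B_{R_0}$. Then $I+E$ is invertible on $\ell^p(X)$, $T$ has the bounded left inverse $(I+E)^{-1}S$, and
\[
\|c\|_{\ell^p}=\bigl\|(I+E)^{-1}S\,Tc\bigr\|_{\ell^p}\le\|(I+E)^{-1}\|\,\|S\|\,\|Tc\|_{\cA^p_{\sigma p/2-n-1}},
\]
which is the left-hand inequality in \eqref{eq:137}. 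Together with the synthesis bound this gives the proposition, with $A=\bigl(\|(I+E)^{-1}\|\,\|S\|\bigr)^{-1}$ and $B$ from the synthesis estimate.

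The main obstacle is not the soft perturbation scheme but the localization statements underlying every step: one must verify that $W_\zeta^\sigma(\zeta)$ --- and hence the off-diagonal quantities $\langle\pi_\sigma(x_j^{-1}x_i)\zeta,\zeta\rangle$ --- lies in $W(L^\infty,L^1)(G_\sigma)$ with an $L^1$-tail that can be made uniformly small, which is exactly where Lemma~\ref{lemma:matrixestimates} (with its harmless logarithmic factor) has to be combined with the sharp integrability threshold $\sigma>2n$, and where the amalgam-space convolution and sampling inequalities on the unimodular group $G_\sigma$ are invoked. Once this localization is secured, the remaining steps are routine, and the argument makes transparent why integrability of $\pi_\sigma$ is the natural hypothesis for the existence of Riesz sequences.
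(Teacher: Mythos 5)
Your argument is correct, but it takes a genuinely different route from the paper. The paper proves the lower bound in \eqref{eq:137} by duality: it invokes the interpolation theorem of Feichtinger--Gr\"ochenig (Theorem~8.2 of the second ``atomic decompositions'' paper) to produce, for a $K$-separated set $X$ and any $d\in\ell^q(X)$, a function $f\in\mathcal{A}^q_{q\sigma/2-n-1}$ with $W_\zeta^\sigma(f)(x_i)=d_i$ and controlled norm, and then tests $\sum_i c_i\pi_\sigma(x_i)\zeta$ against a norming $f$ using the duality $(\mathcal{A}^p_{\sigma p/2-n-1})^*\simeq\mathcal{A}^q_{\sigma q/2-n-1}$. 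You instead build a bounded left inverse of the synthesis map directly, via $ST=I+E$ and a Neumann series, with the off-diagonal part killed by the separation and the integrable envelope of $\Phi=W_\zeta^\sigma(\zeta)$. In effect you are re-proving the mechanism behind the cited interpolation theorem rather than quoting it, which makes your proof more self-contained (and, unlike the duality argument, it would extend to $p=1$), at the cost of importing the Wiener-amalgam machinery ($L^p\ast W(L^\infty,L^1)\subseteq W(L^\infty,L^p)$, sampling and synthesis inequalities for relatively separated sets) that the paper never sets up. Two small points deserve attention: (i) membership of $w_0(x)=(1-|x\cdot o|^2)^{(\sigma-\varepsilon)/2}$ in $W(L^\infty,L^1)(G_\sigma)$ does not follow from $\int_{G_\sigma}w_0\,dx<\infty$ alone --- you need the moderateness of the weight $x\mapsto 1-|x\cdot o|^2$ (noted elsewhere in the paper) so that the local supremum envelope is pointwise comparable to $w_0$ itself; (ii) the Schur bound $\|E\|_{\ell^p\to\ell^p}\le\sup_j\sum_{i\ne j}|e_{ji}|$ requires control of both row and column sums for $1<p<\infty$, which here is rescued by the symmetry $|\Phi(x^{-1})|=|\Phi(x)|$, so that both sums have the same form. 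With these two remarks made explicit, your proof is complete.
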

\begin{proof}
By \cite[Thm.~8.2]{fg89mh} the wavelet transform $W^\sigma _\zeta $ is
interpolating on $ \mathcal{A}^q _{q\sigma /2 -n-1}  = \Co ^\zeta
_{\mathcal{V}_\sigma } L^q(G_\sigma )$ in the sense that there exist a compact set $K\subseteq
G_\sigma $ and a constant $C>0$ with the following property: If $X =
\{x_i\}_{i\in I}  $ is $K$-separated and  $d \in \ell ^q (X)$, then
there exists a function $f \in \mathcal{A}^q _{q\sigma /2 -n-1}$, such
that
$$
(f, \pi _\sigma (x_i) = W^\sigma _\zeta (f)(x_i)  = d_i \quad \forall i \qquad \text{ and }
\qquad 
\|f\| _{\mathcal{A}^q _{q\sigma /2 -n-1}} \leq C \|d\|_q \, .
$$
Now let $c\in \ell ^p (X)$ and  $q$ the conjugate index. By duality
choose $d\in \ell ^q (X)$ with $\|d\|_q = 1$ and $\sum _{i\in I} d_i
\bar{c_i} = \|c\|_p$. According to the interpolation property there is
an $f \in \mathcal{A}^q _{q\sigma /2 -n-1}$ such that $( f, \pi
_\sigma (x_i) \zeta ) = W^\sigma _\zeta
(f)(x_i)   = d_i$ and $\|f\| _{\mathcal{A}^q _{q\sigma /2 -n-1}} \leq
C \|d\|_q = C$. 
  Since $\mathcal{A}^q _{\sigma q/2 - n - 1} = \Co ^\zeta _{V_\sigma
    ^\infty } L^q(G_\sigma )$ is the dual space of  $\mathcal{A}^p _{\sigma p/2 - n - 1} = \Co ^\zeta _{V_\sigma
    ^\infty } L^p(G_\sigma )$ for $1< p <\infty $ (with modifications
  for $p=1,\infty $),  we obtain
  \begin{align*}
    \|c\|_p &= \sum _{i\in I} d_i \bar{c_i} = (f, \sum _{i\in I} c_i
    \pi _\sigma (x_i)\zeta )  \\
&\leq \|f\|_{\mathcal{A}^q _{\sigma q/2 - n - 1}} \|\sum _{i\in I} c_i
    \pi _\sigma (x_i)\zeta  \|_{\mathcal{A}^p _{\sigma p/2 - n - 1}}
    \\
& \leq C  \|\sum _{i\in I} c_i
    \pi _\sigma (x_i)\zeta  \|_{\mathcal{A}^p _{\sigma p/2 - n - 1}}
    \, . 
  \end{align*}
The upper bound in \eqref{eq:137}  always holds  by  Theorem~\ref{at}.
\end{proof}

To summarize, for the Bergman spaces with large $\alpha $ the
condition $\alpha < p (\sigma -n)-1$ forces $\sigma > 2n$ and thus  the
integrability of the representation $\pi _\sigma $. Consequently, for
``most'' Bergman spaces one may use the original coorbit space theory
for integrable representations. However, it is
exactly for small values of $\alpha $ when the extension of coorbit
theory to non-integrable representations
in~\cite{Christensen2009,Christensen2011,Christensen2012} becomes
absolutely essential for the understanding of the  Bergman spaces.

\bibliographystyle{plain}

\end{document}